\documentclass[oneside,english]{amsart}
\usepackage{amsfonts}
\usepackage{graphicx}
\usepackage{amsmath,amssymb,amsfonts,amsthm,mathrsfs}
\usepackage[english]{babel}
\usepackage{hyperref}
\usepackage{marginnote}
\usepackage{color}
\usepackage{enumerate}
\usepackage[numbers]{natbib}

\hypersetup{
  colorlinks=true,
  linkcolor=blue,    % 公式、定理和章节引用
  citecolor=red,     % 参考文献引用
  urlcolor=magenta   % 网页链接
}
\usepackage[bottom]{footmisc}

\newcommand{\R}{\mathbb{R}}

\theoremstyle{plain}

\newtheorem{theorem}{Theorem}[section]
\newtheorem{lemma}[theorem]{Lemma}
\newtheorem{proposition}[theorem]{Proposition}

\theoremstyle{definition}

\theoremstyle{remark}
\newtheorem{remark}[theorem]{Remark}
\numberwithin{equation}{section}

\makeatletter
\def\subsection{\@startsection{subsection}{2}%
  \z@{.5\linespacing\@plus.7\linespacing}{-.5em}%
  {\normalfont}}
\makeatother

\usepackage{tikz}

\newcommand{\N}{\mathbb{N}}

\newcommand{\la}{\lambda}

\begin{document}
\pagestyle{plain}
\title{Compactness of positive radial Solution Sets and corresponding \(L^2\)-Mass sets for ``Zero Mass''
Quasi-linear Schr\"odinger Equations}

\author{Haidong Liu}
\address{Institute of Mathematics, Jiaxing University, Zhejiang 314001, China}
\email{liuhaidong@zjxu.edu.cn}

\author{Yulan Tang}
\address{School of Mathematical Sciences, Capital Normal University, Beijing 100048, China}
\email{tangyulan@cnu.edu.cn}

\author{Chengcheng Wu}
\address{School of Mathematical Sciences, Shanxi University, Taiyuan 030006, China}
\email{wuchengcheng@amss.ac.cn}

\begin{abstract}
We study the compactness of two sets of positive radial solutions to ``zero mass'' quasi-linear Schr\"odinger equation
\[
-\Delta u-u\Delta(|u|^2)=g(u)
\quad\text{in }\mathbb R^N,\quad N\ge5.
\]
For nonlinearities that are either strictly subcritical or
asymptotically critical at infinity, we prove that
the set of least energy positive radial solutions is nonempty and compact in the natural space. Moreover, in the strictly subcritical case and under
additional assumptions, we show that the set of all finite $L^2$-mass positive radial solutions is compact in \(L^2(\mathbb R^N)\). 
The proof relies on uniform decay estimates for the corresponding positive
radial solutions of the transformed semilinear equation, which yield the
required uniform \(L^2\)-tail control.

\begin{flushleft}
\textbf{Keywords:} Quasi-linear Schr\"odinger equation, ``zero mass'' equation, compact solution sets, compact mass sets.\\
\textbf{2020 MSC:} 35A15, 35J62, 35B09.
\end{flushleft}
\end{abstract}

\date{}
\maketitle

\thispagestyle{empty}

\section{Introduction}

In this paper, we are concerned with the compactness of positive radial solution sets and of their
\(L^2\)-mass sets to the quasi-linear Schr\"odinger equation
\begin{equation}\label{eq:1.1}
-\Delta u-u\Delta(|u|^2)=g(u)
\quad\text{in }\mathbb R^N,\quad N\ge 5.
\end{equation}

The following time-dependent quasi-linear equation 
\begin{equation}\label{eq:intro_time_dependent}
i\partial_t\phi +\Delta \phi+\Delta(|\phi|^2)\phi+F(|\phi|^2)\phi=0, \quad (t,x)\in\R \times \R^N,
\end{equation}
appears in dissipative quantum mechanics, in plasma physics and in fluid mechanics; see \cite{MR870992,kurihara1981large,MR727767}. Here $\phi: \R \times \R^N\to \mathbb{C}$ is a complex wave function and $F: \R \to \R$ is a real-valued function. A standing wave of \eqref{eq:intro_time_dependent} is a solution of the form $\phi=e^{i\kappa t}u(x)$, where $\kappa \in \R$ is a constant and \(u\) is a time-independent function. Thus, solutions to \eqref{eq:1.1} correspond to standing waves of \eqref{eq:intro_time_dependent} with $\kappa=0$ and $g(s)=F(s^2)s$. \eqref{eq:1.1} is called the ``zero mass'' quasi-linear Schr\"odinger equation, roughly speaking, if $g(0)=g^{\prime}(0)=0$. 

We recall the semilinear ``zero mass'' equation
\begin{equation}\label{eq:intro_semilinear_zero_mass}
-\Delta v=f(v)\quad\text{in }\mathbb R^N, \quad N\ge 3.
\end{equation}
For \(f(v)=v^{p-1}\), the classical theorem of Gidas and Spruck
\cite{MR615628} implies that \eqref{eq:intro_semilinear_zero_mass} has no
positive classical solution for \(2<p<2^*\). In contrast, if \(p\ge 2^*\), then
\eqref{eq:intro_semilinear_zero_mass} admits a unique positive radial
classical solution up to scaling; see \cite{MR1255211}. For general nonlinearities,
Berestycki and Lions \cite{MR695535} used a constrained minimization method to
prove that \eqref{eq:intro_semilinear_zero_mass} admits a positive,
spherically symmetric, radially decreasing classical solution in
\(D^{1,2}(\mathbb R^N)\). Flucher and M\"uller \cite{MR1617704} proved that a
ground state \(v_0\) satisfies the polynomial decay \(v_0(x)\le C|x|^{2-N}\) for
\(|x|\ge1\). $v_0$ belongs to \(L^2(\mathbb R^N)\) exactly in dimensions
\(N\ge5\). Azzollini and Pomponio \cite{MR2450560} used variational methods to
establish existence and multiplicity results. Jeanjean et al. \cite[Theorem~2.2 $(ii)$]{MR4701352} showed a Liouville-type nonexistence result for \eqref{eq:intro_semilinear_zero_mass}.

Limited work has been done in
\eqref{eq:1.1}. For the pure power equation
\begin{equation}\label{eq:intro_pure_power_qls}
-\Delta u-u\Delta(|u|^2)=|u|^{p-2}u
\quad\text{in }\mathbb R^N, \quad N\ge3,
\end{equation}
if \(2^*<p<2\cdot2^*\), Adachi and Watanabe
\cite{MR3427691} established the uniqueness and nondegeneracy of the positive
solution of \eqref{eq:intro_pure_power_qls} by applying the ODE method
developed in \cite{MR1843289}. Cheng and Wei \cite{MR3989948} showed that \eqref{eq:intro_pure_power_qls} has no fast-decaying solutions if \(2<p\le 2^*\) or \(p\ge 2\cdot 2^*\) and admits a one-parameter family of
slow-decaying positive radial solutions if \(p>2^*\). 

Several recent works showed that ``zero mass'' problems are closely related to the
existence and nonexistence of normalized solutions. We first consider normalized
ground state solutions of
\[
\left\{
\begin{array}{ll}
-\Delta v+\kappa v=f(v), \quad
& \text{in }\mathbb R^N,\quad N\ge3,\\[1mm]
\displaystyle\int_{\mathbb R^N}v^2\,dx=a,
&
\end{array}
\right.
\]
Bieganowski and Mederski \cite{MR4232669} minimized the energy functional on a Pohozaev--Nehari manifold intersected with the closed ball in $L^2(\mathbb{R}^N)$ and then proved that the
minimizer actually belongs to the boundary of the closed ball. Liu and Zhao \cite{MR4660103} showed that the argument in
\cite{MR4232669} relies on the nonexistence, under suitable assumptions, of
nontrivial solutions to \eqref{eq:intro_semilinear_zero_mass}.
For the following quasi-linear equation
\begin{equation}\label{eq:intro_normalized_qls}
\left\{
\begin{array}{ll}
-\Delta u-\Delta(u^2)u+\kappa u=|u|^{p-2}u,
&  \quad \text{in }  \mathbb R^N,\\[1mm]
\displaystyle\int_{\mathbb R^N}u^2\,dx=a,
&
\end{array}
\right.
\end{equation}
the \(L^2\)-mass of positive solution to \eqref{eq:intro_pure_power_qls} determines the threshold in the prescribed
mass \(a\) that separates the existence and nonexistence of normalized ground states for \eqref{eq:intro_normalized_qls}. If \(1\le N\le2\) and
\(2<p<\infty\), \eqref{eq:intro_pure_power_qls} has only trivial solution. If \(N=3,4\) and \(p>2^*\) \eqref{eq:intro_pure_power_qls} has a unique positive \(D^{1,2}\) solution, but is not in \(L^2(\mathbb R^N)\). Jeanjean et al.\cite{jeanjean2025existence} obtained that \eqref{eq:intro_normalized_qls} possesses normalized ground states for any \(a>0\) if $1\le N\le 4$ and $4+\frac{4}{N}< p<2\cdot 2^*$. For \(N\ge5\) and \(4+\frac4N<p<2\cdot 2^*\), \eqref{eq:intro_pure_power_qls} 
admits a unique positive radial solution \(u_0\) with finite \(L^2\)-mass. If
\(a_0=\|u_0\|_2^2\), then normalized ground states are attained precisely
for \(0<a\le a_0\), whereas the ground state is not attained for
\(a>a_0\). 

For positive normalized solutions, the ``zero mass''
equation also plays a important role in the global branch analysis. Consider
\begin{equation}\label{eq:intro_global_branch_semilinear}
-\Delta v+\kappa v=f(v)\quad\text{in }\mathbb R^N,\quad \kappa>0.
\end{equation}
In \cite{MR4701352}, the nonexistence of positive radial classical solutions to \eqref{eq:intro_semilinear_zero_mass} was used to prove the convergence, after a suitable rescaling, of positive solutions \(u_\kappa\) of \eqref{eq:intro_global_branch_semilinear} as \(\kappa\to0^+\) and establish the uniqueness of positive solutions of \eqref{eq:intro_global_branch_semilinear} for sufficiently small \(\kappa>0\). In view of the preceding observations, one of the main motivations for the paper is to advance the study of 
normalized solutions for quasi-linear Schr\"odinger equations by establishing
compactness of the \(L^2\)-mass set associated with positive radial solutions to
\eqref{eq:1.1}.

The energy functional associated with \eqref{eq:1.1} is 
\[
I(u)=\frac12\int_{\R^N}|\nabla u|^2\,dx
+\int_{\R^N}u^2|\nabla u|^2\,dx
-\int_{\R^N}G(u)\,dx,
\]
defined on the natural space
\[X=
\left\{
 u\in H^1(\mathbb R^N):
 \int_{\mathbb R^N}u^2|\nabla u|^2\,dx<\infty
\right\},
\]
where $G(t)=\int_0^t g(s)\,ds$.
$u_0$ is a weak solution of \eqref{eq:1.1} if and only if $$ \lim\limits_{t\rightarrow 0^+}\frac{I(u_0+t\Phi)-I(u_0)}{ t } =0$$
for any $\Phi \in C_0^{\infty}(\mathbb{R}^N,
\mathbb{R})$. Although \(X\) is not a vector space, it is a complete metric space
with respect to
\[
d_X(u_1,u_2)=
\|u_1-u_2\|_{H^1(\mathbb R^N)}
+
\|\nabla(u_1^2)-\nabla(u_2^2)\|_{L^2(\mathbb R^N)};
\]
see \cite[Section~2]{MR2630099}. A positive solution \(u_0\) of
\eqref{eq:1.1} is called a least energy positive solution if it minimizes the
energy functional \(I\) among all positive solutions of \eqref{eq:1.1}. 
We consider the solution sets 
$$\mathcal{S}=\left\{
u\in X:
 u\text{ is a least energy positive radial solution of }\eqref{eq:1.1}
\right\},$$
and
$$\widetilde{\mathcal{S}}=\left\{
u\in X:
 u\text{ is a positive radial solution of }\eqref{eq:1.1}
\right\}.$$
We define $
Y=
\left\{
\int_{\mathbb R^N}u^2\,dx:
 u\in \mathcal{S}
\right\},$ 
and
$ 
\widetilde Y=
\left\{
\int_{\mathbb R^N}u^2\,dx:
 u\in \widetilde{\mathcal{S}}
\right\}.$

The purpose of this paper is to prove compactness of \(\mathcal S\) in the
natural space \(X\) and compactness of \(\widetilde{\mathcal S}\) in
\(L^2(\mathbb R^N)\), and consequently compactness of the corresponding mass
sets \(Y\) and \(\widetilde Y\). The main difficulty is to obtain
uniform tail estimates for $\mathcal{S}$ and $\widetilde{\mathcal{S}}$. We establish these estimates by two
distinct methods. For $\mathcal{S}$, we use the stable manifold of the
zero equilibrium for the associated Emden--Fowler system. For $\widetilde{\mathcal{S}}$, we combine a localized Pohozaev functional with a bootstrap argument on
the exterior region. To the best of our knowledge, this is the first work to establish compactness
of positive radial solution sets, and of their \(L^2\)-mass sets, for
``zero mass'' quasi-linear Schr\"odinger equations in a setting where uniqueness
of positive radial solutions is not assumed. 

Let us first consider the following assumptions for the subcritical case.

\medskip
\noindent\textup{(A1)} \(g\in C^1([0,\infty))\), \(g(0)=0\), and \(g(s)>0\) for \(s>0\).

\medskip
\noindent\textup{(A2)} There exist \(m>2^*\), \(s_0>0\), and \(C_0>0\) such that
\(0\le g(s)\le C_0s^{m-1}\) and \(|g'(s)|\le C_0s^{m-2}\) for
\(s\in(0,s_0]\).

\medskip
\noindent\textup{(A3)} \(\lim\limits_{s\to+\infty} \frac{g(s)}{s^{2\cdot 2^*-1}}=0\)

\medskip
\noindent\textup{(A4)} There exist \(s_1>0\) and \(\nu>2^*\) such that
\(\nu G(s)\le s g(s)\) for any \(s\in(0,s_1]\).

\medskip
\noindent\textup{(A5)} There exist \(A_0>0\), \(\ell>2\), and \(C_1>0\), independent of \(a\), such that, for any
\(a\ge A_0\), the function
\[
\overline{g}_a(t)=\frac{g(at)}{g(a)},\quad 0\le t\le1,
\]
belongs to \(C([0,1])\cap C^1((0,1])\) and satisfies
\[
0\le \overline{g}_a(t)\le C_1t^{\ell-1},\quad
|\overline{g}_a'(t)|\le C_1t^{\ell-2}
\quad\text{for }0<t\le1.
\]

\medskip
\noindent\textup{(A6)} There exist \(p\in(4,2\cdot 2^*)\), \(\sigma_0\in(0,1)\),
\(C_2>0\), and \(S_1>0\) such that
\(g(\sigma t)\ge C_2\sigma^{p-1}g(t)\) for \(t\ge S_1\) and
\(0<\sigma\le\sigma_0\).

\medskip
\noindent\textup{(A7)}
\(\limsup\limits_{t\to+\infty} \frac{t g(t)}{G(t)}
\le 2\cdot 2^*\).

\begin{theorem}\label{thm:1.3}
Let \(N\ge5\). Then we have the following conclusions.
\begin{enumerate}
\item[\textup{$(i)$}]
If \textup{(A1)}--\textup{(A3)} hold, then \(\mathcal S\) is a nonempty compact
subset of \(X\). Consequently, \(Y\) is a nonempty compact subset of
\((0,\infty)\).

\item[\textup{$(ii)$}]
If \textup{(A1)}, \textup{(A2)}, and \textup{(A4)}--\textup{(A7)} hold, then
\(\widetilde{\mathcal S}\) is a nonempty compact subset of
\(L^2(\mathbb R^N)\). Consequently, \(\widetilde Y\) is a nonempty compact
subset of \((0,\infty)\).
\end{enumerate}
\end{theorem}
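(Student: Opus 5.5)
We plan to use the standard dual change of variables $u=f(v)$, where $f$ solves
\[
f'(t)=\frac{1}{\sqrt{1+2f(t)^2}},\qquad f(0)=0 .
\]
This turns \eqref{eq:1.1} into the semilinear ``zero mass'' equation
\[
-\Delta v=h(v):=\frac{g(f(v))}{\sqrt{1+2f(v)^2}}\quad\text{in }\mathbb R^N .
\]
Recall that $f(t)\sim t$ as $t\to0$ and $f(t)\sim 2^{1/4}t^{1/2}$ as $t\to\infty$. Hence (A2) gives $h(v)=O(v^{m-1})$ with $m>2^*$ near $0$, and (A3) makes $h$ strictly subcritical, i.e.\ $h(v)=o(v^{2^*-1})$ at infinity. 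Positive radial solutions $u\in X$ of \eqref{eq:1.1} correspond bijectively to positive radial solutions $v\in D^{1,2}(\mathbb R^N)$ of the transformed equation, with $u^2=f(v)^2\le v^2$. The energies also correspond: $I(u)=J(v):=\frac12\int|\nabla v|^2-\int H(v)$.

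For $(i)$, we first obtain existence of a least energy positive radial solution by the Berestycki--Lions constrained minimization for $-\Delta v=h(v)$ (the zero mass case). The constraint is $\int H(v)=1$ in $D^{1,2}_{rad}$, and the Lagrange multiplier is removed by scaling. The resulting minimizer is a ground state among all solutions via the Pohozaev identity: every solution satisfies $J(v)=\frac1N\int|\nabla v|^2$.

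Next we prove compactness of $\mathcal S$. Take $u_n\in\mathcal S$ with transforms $v_n$, all having the same energy $c$. Pohozaev gives $\|\nabla v_n\|_2^2=Nc$, so by Strauss' radial lemma $v_n\to v$ weakly in $D^{1,2}_{rad}$. Since $H$ is subcritical at both ends, $\int H(v_n)\to\int H(v)$. Passing to the limit, $v$ solves the equation and satisfies the Pohozaev constraint. Lower semicontinuity and the level $c$ force $v\ne0$ (via a uniform lower bound from the Sobolev inequality near $0$), $J(v)=c$, and $\|\nabla v_n\|\to\|\nabla v\|$, hence strong $D^{1,2}$ convergence. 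Positivity of $v$ follows from the strong maximum principle.

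The key step is \emph{uniform $L^2$ tails}: we need $\sup_n\int_{|x|>R}v_n^2\to0$ as $R\to\infty$. For this we use the Emden--Fowler variables $t=\ln r$ and $w=r^{(N-2)/2}v$, which give an autonomous, asymptotically linear system near $(0,0)$ because $h(v)=O(v^{m-1})$ with $m>2^*$. Decay $v\in D^{1,2}$ forces the trajectory of each $v_n$ onto the stable manifold of the origin. Uniform convergence $v_n\to v$ on compact sets, together with the Strauss bound $v_n(r)\le Cr^{-(N-2)/2}$ uniform in $n$, places all trajectories in a fixed small neighborhood of the origin for $r\ge R_0$. On the local stable manifold the linearized rate then gives $v_n(r)\le Cr^{2-N}$ uniformly, and $N\ge5$ makes $r^{2(2-N)}r^{N-1}$ integrable at infinity.

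Having uniform tails, we get $v_n\to v$ in $L^2$, and hence in $H^1$ (gradients converge, and the $L^2$ norms converge). Transferring back via $u_n=f(v_n)$, using the Lipschitz bounds on $f$ and on $f^2$ together with $\nabla(u^2)=2ff'\nabla v$ with $ff'$ bounded, gives convergence in $d_X$. Thus $\mathcal S$ is compact, $Y$ is its continuous image, and $0\notin Y$ because $u\ne0$.

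For $(ii)$, nonemptiness follows from $(i)$; the needed existence argument uses only (A1), (A2) and the growth bound implied by (A6)--(A7). The difficulty is that the energies and the $D^{1,2}$ norms of elements of $\widetilde{\mathcal S}$ are no longer fixed, so a priori bounds are needed. We proceed in two steps.

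1. \emph{Uniform $L^\infty$ bound.} We argue by contradiction with a blow-up. Suppose $a_n=\|u_n\|_\infty\to\infty$. The rescalings $\bar u_n(x)=u_n(x_n+\lambda_n x)/a_n$, normalized with (A5), converge to a bounded positive radial solution of a limit equation with pure power behaviour of exponent $p\in(4,2\cdot2^*)$, by (A6)--(A7). After the quasilinear scaling this limit equation is subcritical, and a Liouville theorem of Gidas--Spruck type rules out such a solution.

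2. \emph{Uniform exterior decay.} With $\sup_n\|v_n\|_\infty\le C$, we apply a localized Pohozaev identity on $B_R$. Combined with (A4), which gives $\nu G\le sg$ with $\nu>2^*$ near $0$, it yields uniform smallness of the $D^{1,2}$ mass outside large balls. A bootstrap on the exterior region, comparing with the fundamental solution since $h(v)\le Cv^{m-1}$ is a small potential there, then gives $v_n\le C|x|^{2-N}$ uniformly. This supplies the $L^2$ tail control, and local compactness follows from elliptic estimates. Compactness in $L^2$ follows, and $\widetilde Y$ is compact and bounded away from $0$ (again by the nonvanishing argument from the lower bound on $\|v\|_\infty$ via (A2)).

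I expect the main obstacle to be the uniform a priori bound together with the exterior decay in $(ii)$, in particular exploiting (A5) and (A6) in the blow-up.
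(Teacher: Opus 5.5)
Part $(i)$ of your proposal is essentially the paper's argument: dual variable, Berestycki--Lions existence, an Emden--Fowler stable manifold for uniform tails, then transfer to $d_X$. Getting $D^{1,2}$-compactness from the Strauss compactness lemma instead of from bounds on the shooting parameters is a harmless shortcut. One correction: for non-power $k$ the Emden--Fowler system is not autonomous. It is a perturbation of size $e^{-\delta_0 t}$, with $\delta_0=\la(m-1)-\frac{N+2}{2}>0$, so you need the nonautonomous version of the fixed-point construction.

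The genuine gaps are in $(ii)$. First, the nonvanishing argument you import from $(i)$ fails. (A2) and Sobolev give only $\|\nabla v\|_2^{2^*-2}\ge c\,\alpha^{-(m-2^*)}$. In $(i)$ this was closed by $\|\nabla v\|_2^2=NE$, but elements of $\widetilde{\mathcal S}$ have no a priori energy bound. So nothing excludes $v_n(0)\to0$, and then $\widetilde{\mathcal S}$ itself, not just $\widetilde Y$, would fail to be compact. What works is the Pohozaev identity $\int_{\mathbb R^N}H(v)\,dx=0$, with $H(\xi)=NK(\xi)-\frac{N-2}{2}\xi k(\xi)$, together with $H<0$ on $(0,\xi_0)$ from (A4); this forces $v(0)\ge\xi_0$. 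Also, in Step 1 the blow-up limit is not a pure power. In the dual variable, the limit $h_\infty$ of $k(\alpha_n\cdot)/k(\alpha_n)$ is only known to satisfy $\liminf_{s\to0^+}h_\infty(s)/s^{p/2-1}>0$ (from (A6)) and $s\,h_\infty(s)\le 2^*\int_0^s h_\infty$ (from (A7)). You therefore need a Liouville theorem for that class (the paper uses Jeanjean et al.), not Gidas--Spruck.

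Second, and most seriously, Step 2 asserts its conclusion rather than proving it. Beyond the radius where $v<\xi_0$, the localized Pohozaev identity only gives $\int_{|x|>R}k(v)v\,dx\le C\,P_v(R)$. Here $P_v(R)=R^N\bigl(\frac12v'(R)^2+K(v(R))\bigr)+\frac{N-2}{2}R^{N-1}v(R)v'(R)$ is a boundary quantity, and its uniform smallness is essentially the uniform decay you are trying to prove. Comparison with the fundamental solution cannot supply it: $v_n$ is superharmonic, so that comparison gives a \emph{lower} bound. An upper bound needs a supersolution such as $|x|^{-(N-2-\varepsilon)}$ and $k(v_n)/v_n\le c|x|^{-2}$ uniformly on the exterior. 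That is a uniform preliminary decay, which requires the uniform $D^{1,2}$ bound you yourself noted is unavailable. You also never show that a limit of finite-mass solutions is again of finite mass rather than a slow-decaying solution, which is the crux of $(ii)$.

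The idea you are missing is the one the paper uses. Since $P_v'=\rho^{N-1}H(v)<0$ in the tail and $P_v\to0$ for $H^1$ solutions, $P_v>0$ there. With $K\le\theta\xi k$ this becomes $\mathcal L'>\frac{\mu_\theta}{\rho}\mathcal L\,(N-2-\mathcal L)$ for $\mathcal L=-\rho v'/v$. Uniform bounds $\rho_-\le\rho_\alpha\le\rho_+$ on the radius where $v_\alpha$ hits a fixed level $b\le\xi_0/2$, together with $\mathcal L_\alpha(\rho_\alpha)\ge L_*>0$, then force $\mathcal L_\alpha\ge N-2-\varepsilon$ beyond a uniform radius. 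A bootstrap using (A2) upgrades this to $v_\alpha\le C\rho^{2-N}$. Closedness comes from a dichotomy on $\lim_{\rho\to\infty}P_{v_*}(\rho)$:
\begin{itemize}
\item if the limit is $\ge0$, then $v_*\in H^1$;
\item if it is $<0$, this contradicts the fact that, for large $n$, $P_{v_n}(\rho_*)\le-\eta$ while $P_{v_n}$ decreases to $0$ on the tail.
\end{itemize}
Your own route can also be repaired once $\alpha$ is bounded above and below. Monotonicity of $P_v$ gives $\int_{|x|>\rho_\alpha}k(v)v\,dx\le C\,P_v(\rho_\alpha)\le C$ uniformly, hence a uniform $D^{1,2}$ bound, after which the machinery of $(i)$ applies.
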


We use the dual method established in
\cite{MR2029068,MR1933335}. Under \textup{(A1)}--\textup{(A3)}, the associated ``zero mass'' semilinear equation satisfies the assumptions of
\cite[Theorem~4]{MR695535}. Consequently, \(\mathcal S\) is nonempty. In the
Emden--Fowler variables, assumption \textup{(A2)} yields an asymptotically
autonomous system near the zero equilibrium, which yields a local stable
manifold at the zero equilibrium and uniform tail estimates for the
corresponding least energy positive radial solutions.

Assumption \textup{(A4)} gives the sign required in the localized Pohozaev identity for sufficiently large radii. In
part \textup{$(ii)$}, assumption \textup{(A3)} is not imposed separately, because
it follows from \textup{(A1)} and \textup{(A6)}, as shown in
Lemma~\ref{lem:prelim_A_consequences} \textup{$(ii)$}. Assumptions \textup{(A5)}--\textup{(A7)} allow us to apply the nonexistence result \cite[Theorem~2.2 $(ii)$]{MR4701352} to a rescaled limit, ruling out unbounded sequences of shooting parameters defined below.

\begin{remark}
Here we give some examples of nonlinearities satisfying our conditions. 

(1) \(g(s)=s^{2\cdot2^*-1}/(1+\ln(1+s))\) satisfies (A1)--(A3).

(2) If \(2^*<p<2\cdot2^*\), then
\(g(s)=(2+\sin s)s^{p-1}\) satisfies
\textup{(A1)}--\textup{(A3)}.

(3) If \(4<p<2\cdot2^*\), then \(g(s)=s^{p-1}/(1+s)^2\) satisfies (A1), (A2) and (A4)--(A7).
\end{remark}

We also consider an asymptotically critical case, in which \textup{(A3)} is replaced by a critical asymptotic condition together with a positive lower-order perturbation. We introduce the following assumptions.

\medskip
\noindent\textup{(G1)} \(g\in C^1([0,\infty))\), and \(g(s)\ge0\) for \(s\ge0\).

\medskip
\noindent\textup{(G2)} There exist \(m>2^*\), \(C_0>0\), and \(s_0\in(0,1)\) such that
\[
0\le g(s)\le C_0s^{m-1},
\quad
|g'(s)|\le C_0s^{m-2}
\quad\text{for }s\in[0,s_0].
\]

\medskip
\noindent\textup{(G3)} There exist \(S_\infty>0\), $
2\cdot 2^*-2<q_1\le q_2<2\cdot 2^*$, and
\(c_1,c_2>0\) such that, for any \(s\ge S_\infty\),
\[
c_1s^{q_1-1}
\le g(s)-s^{2\cdot 2^*-1}
\le c_2s^{q_2-1}.
\]

\begin{theorem}\label{thm:1.5}
Let \(N\ge5\). Assume that \textup{(G1)}--\textup{(G3)} hold. Then \(\mathcal S\) is a nonempty compact subset of \(X\). Consequently,
\(Y\) is a nonempty compact subset of \((0,\infty)\).
\end{theorem}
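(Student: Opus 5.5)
We argue through the dual change of variables of \cite{MR2029068,MR1933335}. Let \(f\) solve \(f'(t)=(1+2f^2(t))^{-1/2}\), \(f(0)=0\), extended oddly. Then \(u=f(v)\) solves \eqref{eq:1.1} if and only if \(v\) solves the semilinear ``zero mass'' equation
\[
-\Delta v=h(v):=g(f(v))f'(v)\quad\text{in }\mathbb R^N.
\]
The energy transforms as \(I(f(v))=J(v)=\frac12\int|\nabla v|^2-\int H(v)\). Hence \(\mathcal S\) corresponds to the set of least energy positive radial solutions of the semilinear problem.

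We use the standard facts \(f(t)\sim t\) as \(t\to0\) and \(f(t)\sim 2^{1/4}t^{1/2}\) as \(t\to\infty\). From (G1)--(G3) we get \(h(t)=O(t^{m-1})\) near \(0\), with the derivative bound from (G2). Near infinity,
\[
h(t)=2^{-1/2}\bigl(f(t)^{2\cdot2^*-1}+\text{lower order}\bigr)f(t)^{-1}(1+o(1)),
\]
so \(h(t)=\kappa t^{2^*-1}+r(t)\) with \(\kappa>0\) and \(c t^{q_1/2-1}\lesssim r(t)\lesssim t^{q_2/2-1}\). The exponents satisfy \(q_i/2\in(2^*-1,2^*)\). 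This step needs care: the error in \(f(t)^2=\sqrt2\,t+O(1)\) perturbs the critical term by \(O(t^{2^*-3/2})\). This is dominated by the positive term \(t^{q_1/2-1}\) precisely because \(q_1>2\cdot2^*-2\), so \(r\) stays positive. The semilinear problem is therefore critical at infinity with a positive subcritical perturbation.

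\emph{Existence.} We follow the Brezis--Nirenberg strategy in \(D^{1,2}_{rad}\). We minimize \(J\) on the Pohozaev manifold, or equivalently use the Berestycki--Lions constrained minimization, and show that the level \(c\) lies strictly below \(\frac1N\kappa^{-(N-2)/2}S^{N/2}\). To do this we test with concentrating Aubin--Talenti bubbles \(U_\varepsilon\) of large amplitude. The gain is
\[
\int R(U_\varepsilon)\gtrsim \varepsilon^{N-\frac{N-2}{2}\frac{q_1}{2}}\cdot(\text{const}),
\]
and it beats the \(O(\varepsilon^{N-2})\) loss when \(N\ge5\) and \(q_1/2>2^*-1\). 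We then take a minimizing sequence, apply Schwarz symmetrization, and use concentration-compactness: vanishing is excluded since \(c>0\), and concentration, i.e. bubbling at the critical level, is excluded by the strict inequality. This gives a radial ground state which is positive by the maximum principle, so \(\mathcal S\neq\emptyset\). The least energy level among positive solutions equals \(c\) by the Pohozaev characterization.

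\emph{Compactness.} Let \(v_n\in\mathcal S\) be a sequence at level \(c\). Since \(J(v_n)=c\) and \(v_n\) satisfies the Pohozaev identity, \(\|\nabla v_n\|_2^2=Nc\) is bounded. The main obstacle is to exclude blow-up \(v_n(0)\to\infty\). Rescaling \(w_n(x)=v_n(0)^{-1}v_n(v_n(0)^{-2/(N-2)}x)\) gives a limit solving \(-\Delta w=\kappa w^{2^*-1}\), hence a bubble with energy \(\ge\frac1N\kappa^{-(N-2)/2}S^{N/2}>c\). Combined with Struwe-type energy splitting, this gives a contradiction. Sup bounds together with \(\|\nabla v_n\|_2\) bounds also exclude vanishing: \(v_n(0)\to0\) would contradict the Nehari-type identity \(\int|\nabla v_n|^2=\int h(v_n)v_n\le C\|v_n\|_\infty^{m-2^*}\|v_n\|_{2^*}^{2^*}\). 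So \(v_n(0)\in[\delta,M]\), and ODE continuous dependence gives local \(C^1\) convergence.

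For the tails we use the Emden--Fowler variables \(s=\ln r\), \(y=r^{(N-2)/2}v\). Assumption (G2) gives the asymptotically autonomous system near \(0\) used for Theorem~\ref{thm:1.3}\textup{$(i)$}. The stable-manifold argument yields \(v_n(r)\le Cr^{2-N}\) uniformly for \(r\ge R_0\), which gives uniform \(L^2\) and \(D^{1,2}\) tail control since \(N\ge5\). This yields convergence in \(H^1\). Convergence of \(\nabla(u_n^2)=2f f'(v_n)\nabla v_n\) in \(L^2\) follows from the uniform \(L^\infty\) bound, so \(\mathcal S\) is compact in \(X\). Finally, the map \(u\mapsto\|u\|_2^2\) is continuous and positive on \(\mathcal S\), so \(Y\) is compact in \((0,\infty)\).
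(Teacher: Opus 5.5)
Your proposal is correct in outline, and its compactness half follows the paper. The Pohozaev identity gives $\|\nabla v\|_2^2=Nc$. The blow-up limit solves $-\Delta w=\kappa w^{2^*-1}$ with $\kappa=2^{2/(N-2)}$, and weak lower semicontinuity of the Dirichlet norm already yields $\kappa^{-(N-2)/2}S^{N/2}\le Nc$, contradicting $c<\mathcal E_*$; no Struwe splitting is needed. Nehari plus Sobolev bounds $v(0)$ from below, and the Emden--Fowler stable manifold gives the uniform tails. When you write the tail step out for a sequence, you need two facts, as in Lemmas~\ref{lem:sec5_decay_implies_local_stable}--\ref{lem:sec5_uniform_entry}. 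First, the limit profile lies in $D^{1,2}$ (it is the weak limit), so its trajectory is small at some fixed $T$. Second, a uniform bound $w_n\le M$ from the radial lemma keeps the nearby trajectories inside the ball after $T$.

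The genuine difference is existence. You minimize on the Pohozaev manifold, with symmetrization and concentration--compactness. The paper instead takes the mountain-pass level and uses $\Phi(\theta,v)=J(v(e^{-\theta}\cdot))$ to get a Palais--Smale sequence with $P(v_n)\to0$, which is therefore bounded. It then proves compactness below $\mathcal E_*$ by splitting $k=f_c+b_c$, and identifies that level with the least energy over all nontrivial critical points. Your route must in addition show that the Pohozaev manifold is a natural constraint. It must also rule out both concentration and spreading (vanishing amplitude at large scales) of minimizing sequences. The paper's route gets boundedness and the critical-point property directly from the construction.

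One computation is wrong as written. By Lemma~\ref{lem:prelim_r_large_bounds}~\textup{$(vi)$}, $f(t)^2=\sqrt2\,t-\frac14\ln t+O(1)$, so the critical part is $f'(t)f(t)^{2\cdot2^*-1}=\kappa t^{2^*-1}+O(t^{2^*-2}\ln t)$. An error of size $t^{2^*-3/2}$ would be dominated by $t^{q_1/2-1}$ only when $q_1>2\cdot2^*-1$. So ``precisely because $q_1>2\cdot2^*-2$'' does not follow from your estimate. With the correct expansion it does, and $r(t)\ge ct^{q_1/2-1}$ for large $t$.

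With this bookkeeping, the bubble test must also absorb the region where $U_\varepsilon$ is bounded. There $R\ge-\frac{\kappa}{2^*}t^{2^*}$, which costs $O(\varepsilon^{N/2})$. This loss and the $O(\varepsilon^{N-2})$ cut-off error are both beaten by the gain $\varepsilon^{N-\frac{N-2}{4}q_1}$, because (G3) makes that exponent smaller than $\frac{N-2}{2}$. The paper instead keeps the $-c\,t^{2^*-1}\ln t$ correction inside $K_c$. Its dominant loss is then $\varepsilon^{\frac{N-2}{2}}\ln\frac1\varepsilon$, and that is where $q_1>2\cdot2^*-2$ is used.
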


\begin{remark}
Here we give some examples of nonlinearities satisfying (G1)--(G3). 
Let \(2\cdot2^*-2<q<2\cdot2^*\).  

(1) $g(s)=s^{2\cdot 2^*-1}+s^{q-1}$ for $s\ge 0$.

(2) $g(s)=s^{2\cdot 2^*-1}+(2+\sin s)s^{q-1}$ for $s\ge 0$.

(3) $
g(s)=s^{2\cdot 2^*-1}+(1+\ln(1+s))s^{q-1}$ for $s\ge 0$.
\end{remark}

Since only positive solutions are considered, under either
\textup{(A1)}--\textup{(A2)} or \textup{(G1)}--\textup{(G2)}, we extend
\(g\) to \(\mathbb R\) by setting $
g(s)=0$ for $s\le0.$
Under 
the change of variables \(u=r(v)\) introduced in \cite{MR2029068,MR1933335}, we consider the dual problem
\begin{equation}\label{eq:intro_dual}
\left\{
\begin{array}{l}
-\Delta v=k(v)\quad \text{in }\mathbb R^N,\\
v\in D^{1,2}(\mathbb R^N),
\end{array}
\right.
\end{equation}
where \(k(t)=r'(t)g(r(t))\). The change of variables \(u=r(v)\) yields a one-to-one correspondence
between weak solutions \(v\in H^1(\mathbb R^N)\) of
\eqref{eq:intro_dual} and weak solutions \(u\in X\) of
\eqref{eq:1.1}. 

For positive radial solutions, \eqref{eq:intro_dual} reduces to the shooting problem
\begin{equation}\label{eq:intro_radial_ode}
\begin{cases}
-v''-\dfrac{N-1}{\rho}v'=k(v),\\
v(0)=\alpha>0,\quad v'(0)=0.
\end{cases}
\end{equation}
For each \(\alpha>0\), let \(v_\alpha\) denote the corresponding solution of
\eqref{eq:intro_radial_ode}. We
consider the sets of shooting parameters
\begin{equation}
Z=
\left\{
\alpha>0:
 v_\alpha\in D^{1,2}(\mathbb R^N)\ \text{and}\ r(v_\alpha)\in\mathcal S
\right\},
\end{equation}
and
\[
\widetilde Z=
\left\{
\alpha>0:
 v_\alpha\in H^1(\mathbb R^N) \text{ is a positive radial solution of }\eqref{eq:intro_dual}
\right\}.
\]
We define the solution maps
\begin{equation}\label{eq:intro_solution_maps}
\begin{aligned}
U&: Z\to X,
& U(\alpha)&=r(v_\alpha),\\
\widetilde U&: \widetilde Z\to L^2(\mathbb R^N),
& \widetilde U(\alpha)&=r(v_\alpha),
\end{aligned}
\end{equation}
and the associated mass maps
\begin{equation}\label{eq:intro_mass_maps}
\begin{aligned}
M&: Z\to(0,\infty),
& M(\alpha)&=\|U(\alpha)\|_{L^2(\mathbb{R}^N)}^2,\\
\widetilde M&: \widetilde Z\to(0,\infty),
& \widetilde M(\alpha)&=\|\widetilde U(\alpha)\|_{L^2(\mathbb{R}^N)}^2.
\end{aligned}
\end{equation} It therefore remains to prove the compactness of $Z$, $\widetilde{Z}$ and the continuity of $U$ and $\tilde{U}$. 

\begin{remark}\label{rem:intro_radial_monotonicity}
Assume the hypotheses of Theorem~\ref{thm:1.3} or of
Theorem~\ref{thm:1.5}. 
If \(v_\alpha\) is a positive radial solution of
\eqref{eq:intro_radial_ode}, then
\[
\rho^{N-1}v_\alpha'(\rho)
=-\int_0^\rho s^{N-1}k(v_\alpha(s))\,ds\le0
\quad\text{for }\rho>0.
\]
Thus \(v_\alpha\) is nonincreasing. If \(v_\alpha\) is constant on a nonempty
interval, uniqueness for \eqref{eq:intro_radial_ode} implies
\(v_\alpha\equiv v_\alpha(0)>0\), which is incompatible with
\(v_\alpha\in D^{1,2}(\R^N)\). Hence
\(v_\alpha\) is strictly decreasing on \((0,\infty)\).
\end{remark}

The paper is organized as follows. Section~\ref{sec:preliminaries} gives some preliminary results. The boundedness of \(Z\) and \(\widetilde Z\) is proved in Section~\ref{sec:gen-boundedness}. Under the assumptions of
Theorem~\ref{thm:1.3} \textup{$(i)$}, the boundedness of \(Z\) is obtained from
direct estimates for \eqref{eq:intro_radial_ode}. The boundedness of
\(\widetilde Z\) under the assumptions of
Theorem~\ref{thm:1.3} \textup{$(ii)$} and that of \(Z\) under the assumptions of
Theorem~\ref{thm:1.5} are proved by blow-up arguments. In Section~\ref{sec:mass-geometry}, we prove the compactness of
\(Z\) and the continuity of \(U\). We apply a local stable manifold theorem
to the associated nonautonomous Emden--Fowler system to obtain the required
uniform decay estimates, thereby completing the proofs of
Theorems~\ref{thm:1.3} \textup{$(i)$} and~\ref{thm:1.5}. In
Section~\ref{sec:finite-mass-radial}, we introduce $P_v$ and combine its
monotonicity with a bootstrap argument to prove that $\widetilde Z$ is
compact and derive uniform decay estimates. The continuity of
$\widetilde U$ then follows from these estimates, completing the proof of
Theorem~\ref{thm:1.3} \textup{$(ii)$}. Finally, in
Appendix~\ref{app:gen-existence}, we establish the mountain pass geometry and
the Palais--Smale compactness below the critical threshold, which yield the
least energy positive solution required in Theorem~\ref{thm:1.5}.

We fix the following notation. 
For \(1\le p\le\infty\), \(\|\cdot\|_p\) denotes the norm in
\(L^p(\R^N)\). The letters \(c\) and \(C\) stand for positive constants whose
precise values may change from line to line. For a real number or a real-valued
function \(w\), set \(w_+=\max\{w,0\}\) and \(w_-=\max\{-w,0\}\). Set
\begin{equation}\label{eq:Estar_def}
\mathcal E_*=\frac1{2N}S^{N/2},
\end{equation} 
where \(S\)
denotes the best constant for the Sobolev embedding
\(D^{1,2}(\mathbb R^N)\hookrightarrow L^{2^*}(\mathbb R^N)\). Let \(E\) denote the least energy level of \(J\) among all positive solutions of
\eqref{eq:intro_dual}.

\section{Preliminaries}\label{sec:preliminaries}

Define \(h\) by
\begin{equation}
h(t)=
\frac12t\sqrt{1+2t^2}
+
\frac{1}{2\sqrt2}
\ln\!\bigl(\sqrt2\,t+\sqrt{1+2t^2}\bigr),
\quad t\ge0, 
\end{equation}
and $h(t)=-h(-t)$ for $t\le 0$. Since
\(h'(t)=\sqrt{1+2t^2}\ge0\) for $t\ge0$, \(h\) has an inverse function \(r: [0,\infty)\rightarrow [0,\infty)\), which is $C^{\infty}$ on $(0, \infty)$ and satisfies the first order Cauchy problem
\[
r'(t)=\frac{1}{\sqrt{1+2r(t)^2}} \quad \text{on }(0,\infty), \quad r(0)=0.
\quad
\]
We now extend $r$ to $\mathbb{R}$ by letting $r(s)=-r(-s)$ when $s<0$. We first present some properties of $r$. 

\begin{lemma}\label{lem:prelim_r_large_bounds}
The odd function \(r\) satisfies the following properties: 
\begin{enumerate}
\item[\textup{$(i)$}]
$r'(0)=1$, $\lim\limits_{t\to0}\frac{r(t)}{t}=1$, and $
\lim\limits_{t\to+\infty}\frac{r(t)}{\sqrt t}=2^{1/4};$
\item[\textup{$(ii)$}] $
0<r'(t)\le1$, $
|r(t)|\le \min\{|t|,2^{1/4}|t|^{1/2}\}$ for $t\in \mathbb{R}$;

\item[\textup{$(iii)$}] $
\frac{1}{2}\le\frac{t r'(t)}{r(t)}\le1$ for $t>0$;
\item[\textup{$(iv)$}] $
\lim\limits_{t\to+\infty}r'(t)t^{1/2}=2^{-3/4};$
\item[\textup{$(v)$}] $
-1\le \frac{t r''(t)}{r'(t)}\le0$ for $t>0$;

\item[\textup{$(vi)$}] $t-\frac{1}{4\sqrt2}\ln t-\frac{1}{\sqrt2}r(t)^2\to b_0>0$ as $t\to +\infty$.
\end{enumerate}
\end{lemma}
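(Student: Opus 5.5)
The plan is to work on $[0,\infty)$ from the two defining relations $h(r(t))=t$ and $r'(t)=(1+2r(t)^2)^{-1/2}$, and then use oddness of $r$ for $t<0$. Differentiating the ODE gives
\[
r''(t)=-\frac{2r(t)r'(t)}{(1+2r^2)^{3/2}}=-\frac{2r(t)}{(1+2r(t)^2)^2}\le0 .
\]
Throughout, write $s=r(t)$, so that $t=h(s)$ and $s\to\infty$ exactly when $t\to\infty$.

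\emph{Parts $(i)$, $(ii)$, $(iv)$.} Since $r(0)=0$, we get $r'(0)=1$, and hence $r(t)/t\to1$. For large $s$,
\[
h(s)=\tfrac{1}{\sqrt2}s^2+O(\ln s),
\]
so $t/s^2\to 1/\sqrt2$, i.e. $r(t)/\sqrt t\to 2^{1/4}$. Part $(iv)$ then follows from $r'(t)\sqrt t=\sqrt t/\sqrt{1+2r^2}\to 1/(\sqrt2\,2^{1/4})=2^{-3/4}$. For $(ii)$: $0<r'\le1$ is immediate from the formula, and it gives $r(t)\le t$. For the square-root bound, note $r'\le 1/(\sqrt2\,r)$, so $(r^2)'\le\sqrt2$ and $r^2\le\sqrt2\,t$. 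Oddness extends both bounds to $t<0$.

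\emph{Part $(iii)$.} The upper bound $tr'(t)\le r(t)$ holds because $r$ is concave with $r(0)=0$. For the lower bound $2tr'\ge r$, substitute $t=h(s)$; the claim becomes
\[
2h(s)\le s\sqrt{1+2s^2},\quad\text{i.e.}\quad \tfrac{1}{\sqrt2}\ln\bigl(\sqrt2 s+\sqrt{1+2s^2}\bigr)\le s\sqrt{1+2s^2}.
\]
Both sides vanish at $s=0$, and the derivatives compare as $1/\sqrt{1+2s^2}\le \sqrt{1+2s^2}+2s^2/\sqrt{1+2s^2}$, which is clear.

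\emph{Part $(v)$.} We have
\[
\frac{tr''}{r'}=-\frac{2h(s)\,s}{1+2s^2}\le0 .
\]
The lower bound $-1$ amounts to $2sh(s)\le 1+2s^2$. Using $(iii)$ in the form $2h(s)\le s\sqrt{1+2s^2}$, the left side is at most $s^2\sqrt{1+2s^2}$. This is not at most $1+2s^2$ for large $s$, so this crude route fails and a finer bound is needed. I expect this step to be the main obstacle.

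The target is $2sh(s)\le 1+2s^2$, which amounts to $h(s)\lesssim s$; but $h(s)\sim s^2/\sqrt2$, so this is false for large $s$. I therefore suspect the intended quantity has different normalisation, for instance $tr''/r'$ with $r''/r'=-2r r'/(1+2r^2)$, giving
\[
\frac{tr''}{r'}=-\frac{2sh(s)}{(1+2s^2)^{3/2}} .
\]
With this, $(iii)$ gives $2sh(s)\le s^2\sqrt{1+2s^2}\le(1+2s^2)^{3/2}$, which proves $(v)$. I will recheck $r''$ from $r'=(1+2r^2)^{-1/2}$: indeed $r''=-2r r'(1+2r^2)^{-3/2}$, so $r''/r'=-2r/(1+2r^2)^{3/2}$, which confirms the corrected formula.

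\emph{Part $(vi)$.} Expand $t=h(s)$ exactly for large $s$. First,
\[
\tfrac12 s\sqrt{1+2s^2}=\tfrac{1}{\sqrt2}s^2+\tfrac{1}{4\sqrt2}+O(s^{-2}).
\]
Second,
\[
\tfrac{1}{2\sqrt2}\ln\bigl(\sqrt2 s+\sqrt{1+2s^2}\bigr)=\tfrac{1}{2\sqrt2}\ln(2\sqrt2\,s)+o(1).
\]
Next, since $s^2=\sqrt2\,t+O(\ln t)$, we have $\ln s=\tfrac12\ln t+\tfrac14\ln2+o(1)$. Combining,
\[
t-\tfrac{1}{\sqrt2}r^2-\tfrac{1}{4\sqrt2}\ln t\to b_0=\tfrac{1}{4\sqrt2}+\tfrac{1}{2\sqrt2}\ln(2\sqrt2)+\tfrac{1}{8\sqrt2}\ln2 ,
\]
which is positive. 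In this part the only care needed is bookkeeping of the constants.
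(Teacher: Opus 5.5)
Your arguments for $(i)$, $(ii)$, $(iv)$ and $(vi)$ are correct and self-contained; the paper simply cites the literature for these parts. However, the lower bound in $(iii)$ is not actually proved, and the error carries over into $(v)$. With $t=h(s)$, $r=s$ and $r'=(1+2s^2)^{-1/2}$, the inequality $2tr'\ge r$ is equivalent to $2h(s)\ge s\sqrt{1+2s^2}$, not to $2h(s)\le s\sqrt{1+2s^2}$. The inequality you wrote is false for every $s>0$, because $2h(s)-s\sqrt{1+2s^2}=\frac{1}{\sqrt2}\ln(\sqrt2 s+\sqrt{1+2s^2})>0$. Your ``i.e.'' is also not equivalent to it: $\frac{1}{\sqrt2}\ln(\sqrt2 s+\sqrt{1+2s^2})\le s\sqrt{1+2s^2}$ is the same as $h(s)\le s\sqrt{1+2s^2}$, i.e.\ $tr'\le r$. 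That is the upper bound you had already obtained from concavity. So your derivative comparison proves the upper bound a second time and leaves $tr'/r\ge\frac12$ unproved. The fix is immediate: the correct reduction $2h(s)\ge s\sqrt{1+2s^2}$ holds because the logarithmic term in $h$ is nonnegative. Alternatively, $(r^2)'=2r/\sqrt{1+2r^2}$ is nondecreasing, so $r^2$ is convex with $r^2(0)=0$, which gives $r^2\le t(r^2)'=2trr'$.

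In $(v)$, your detour about a ``different normalisation'' came only from a slip in your first formula. Dividing your own $r''=-2r(1+2r^2)^{-2}$ by $r'=(1+2r^2)^{-1/2}$ gives $tr''/r'=-2s\,h(s)(1+2s^2)^{-3/2}$ directly, and the statement is correct as written. You then bound this quantity using $2h(s)\le s\sqrt{1+2s^2}$, which is the false inequality from $(iii)$. Use instead the true upper bound $h(s)\le s\sqrt{1+2s^2}$, that is, $tr'\le r$. This gives $2s\,h(s)\le 2s^2\sqrt{1+2s^2}\le(1+2s^2)^{3/2}$, hence $tr''/r'\ge-1$. This is exactly the paper's argument: there it is written as $tr''/r'=-2r\,(tr')/(1+2r^2)\ge-2r^2/(1+2r^2)\ge-1$, using $r''=-2r(r')^2/(1+2r^2)$. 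With these two corrections your proof is complete.
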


\begin{proof}
\textup{$(i)$}--\textup{$(iii)$} follow from
\cite[Lemma~2.1]{MR4433087}, while \textup{$(iv)$} follows from
\cite[Lemma~2.2]{MR3427691}. \textup{$(vi)$} has been proved in
\cite[Lemma~A.1 \textup{(3)}]{MR2983045}. To prove
\textup{$(v)$}, differentiating \(r^{\prime}\) yields $
r''(t)=-\frac{2r(t)(r'(t))^2}{1+2r(t)^2}.$ 
By \textup{$(iii)$}, for \(t>0\), $
-1
\le -\frac{2r(t)^2}{1+2r(t)^2}
\le \frac{t r''(t)}{r'(t)}
\le 0,$ which implies $(v)$. 
\end{proof}

We next prove some properties of \(k\) and \(K\), where $K(t)=\int_{0}^{t}k(s)ds$.

\begin{lemma}\label{lem:prelim_k_local_bounds}
Assume that either \textup{(A1)}--\textup{(A2)} or
\textup{(G1)}--\textup{(G2)} hold. For any \(A>0\), there exists \(C_A>0\)
such that, for \(0<t\le A\),
\[
0\le k(t)\le C_A t^{m-1},
\quad
|k'(t)|\le C_A t^{m-2}.
\]
\end{lemma}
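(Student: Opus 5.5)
We argue on $(0,A]$, splitting at a small threshold where (A2)/(G2) apply and using continuity above it. Since $k(t)=r'(t)g(r(t))$, differentiation gives
\[
k'(t)=r''(t)g(r(t))+(r'(t))^2g'(r(t)).
\]
By Lemma~\ref{lem:prelim_r_large_bounds} \textup{$(ii)$}, $0<r'\le1$ and $0<r(t)\le t$ for $t>0$. Since $g\ge0$ on $[0,\infty)$ under either set of hypotheses, $k\ge0$ there. Moreover, $r$ is $C^\infty$ on $(0,\infty)$ and $g\in C^1$, so $k\in C^1((0,\infty))$.

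\emph{Small $t$.} Let $s_0$ be as in (A2) or (G2), and note that $r(t)\le t\le s_0$ whenever $0<t\le s_0$. The growth condition then gives $0\le k(t)\le g(r(t))\le C_0r(t)^{m-1}\le C_0t^{m-1}$, using $m-1>0$. For the derivative, Lemma~\ref{lem:prelim_r_large_bounds} \textup{$(v)$} gives $|r''(t)|\le r'(t)/t\le 1/t$. Hence
\[
|k'(t)|\le \frac{1}{t}C_0r(t)^{m-1}+C_0r(t)^{m-2}\le 2C_0t^{m-2},
\]
where we used $r(t)\le t$, which is valid because $m-2>0$ (indeed $m>2^*>2$).

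\emph{Moderate $t$.} For $s_0\le t\le A$, the functions $k$ and $k'$ are continuous on the compact interval $[s_0,A]\subset(0,\infty)$, so they are bounded by some constant $M_A$. Since $t^{m-1}\ge s_0^{m-1}$ and $t^{m-2}\ge s_0^{m-2}$ on this interval, we get $k(t)\le M_As_0^{1-m}t^{m-1}$ and $|k'(t)|\le M_As_0^{2-m}t^{m-2}$ there. If $A<s_0$, only the first case is needed.

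Taking $C_A=\max\{2C_0,\,M_As_0^{1-m},\,M_As_0^{2-m}\}$ gives both estimates on $(0,A]$. No step is a real obstacle. The only point needing care is the $r''$ term in $k'$, and the bound $|tr''/r'|\le1$ from Lemma~\ref{lem:prelim_r_large_bounds} \textup{$(v)$} handles it.
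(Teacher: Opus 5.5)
Your proof is correct and follows essentially the paper's argument. You use the same product-rule formula for $k'$, the growth bounds from (A2)/(G2) near $0$, and continuity of $k\in C^1((0,\infty))$ on the remaining compact interval. The only difference is minor: you bound the $r''$ term via Lemma~\ref{lem:prelim_r_large_bounds}~\textup{$(v)$} as $|r''(t)|\le 1/t$. The paper instead uses $|r''(t)|\le 2t$, and therefore has to restrict to $t\le\min\{1,s_0\}$.
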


\begin{proof}
By Lemma~\ref{lem:prelim_r_large_bounds} \textup{$(ii)$}, we obtain that, for $t>0$,
\begin{equation}\label{eq:prelim_r_second_derivative_bound}
|r''(t)|
=\frac{2r(t)(r'(t))^2}{1+2r(t)^2}
\le 2t.
\end{equation}
Let \(t_0=\min\{1,s_0\}\). Under either \textup{(A2)} or \textup{(G2)}, we have
\[
0\le g(r(t))\le C_0t^{m-1},
\quad
|g'(r(t))|\le C_0t^{m-2}
\quad\text{for }0<t\le t_0.
\]
Noting that \(k'(t)=r''(t)g(r(t))+(r'(t))^2g'(r(t))\),
Lemma~\ref{lem:prelim_r_large_bounds} \textup{$(ii)$} and
\eqref{eq:prelim_r_second_derivative_bound} yield
\begin{equation}\label{eq:prelim_k_small_bounds}
0\le k(t)\le C_0t^{m-1},
\quad
|k'(t)|
\le 2C_0t^m+C_0t^{m-2}
\le 3C_0t^{m-2}
\quad\text{for }0<t\le t_0.
\end{equation}
By either \textup{(A1)} or \textup{(G1)}, \(k\in C^1((0,\infty))\). This together with \eqref{eq:prelim_k_small_bounds} completes the proof.
\end{proof}

\begin{lemma}\label{lem:prelim_A_consequences}
The following conclusions hold.

\begin{enumerate}
\item[\textup{$(i)$}]
If \textup{(A1)} and \textup{(A3)} hold, then $
\lim\limits_{t\to+\infty}\frac{k(t)}{t^{2^*-1}}=0.$

\item[\textup{$(ii)$}]
If \textup{(A1)} and \textup{(A6)} hold, then
\textup{(A3)} holds.
\end{enumerate}
\end{lemma}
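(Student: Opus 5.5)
For part $(i)$ we write $k(t)=r'(t)g(r(t))$ and split it as
\[
\frac{k(t)}{t^{2^*-1}}
=\bigl(r'(t)t^{1/2}\bigr)\cdot\frac{g(r(t))}{r(t)^{2\cdot2^*-1}}\cdot\frac{r(t)^{2\cdot 2^*-1}}{t^{2^*-1/2}}.
\]
The first factor tends to $2^{-3/4}$ by Lemma~\ref{lem:prelim_r_large_bounds} $(iv)$. The third factor equals $\bigl(r(t)/\sqrt t\bigr)^{2\cdot2^*-1}$, which tends to $2^{(2\cdot 2^*-1)/4}$ by Lemma~\ref{lem:prelim_r_large_bounds} $(i)$. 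The exponents match because $t^{(2\cdot2^*-1)/2}=t^{2^*-1/2}$ and $t^{-1/2}\cdot t^{2^*-1/2}=t^{2^*-1}$. Since $r(t)\to+\infty$ as $t\to+\infty$ (again by $(i)$), assumption (A3) gives that the middle factor tends to $0$. Hence the product tends to $0$. Assumption (A1) only ensures that $g$ is well defined and continuous along $r(t)$.

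For part $(ii)$ we use (A6) with $\sigma=\sigma_0$ and iterate it. For $s\ge S_1/\sigma_0$, set $t=s/\sigma_0\ge S_1$; then (A6) gives $g(s)\ge C_2\sigma_0^{p-1}g(s/\sigma_0)$, that is,
\[
g(s/\sigma_0)\le C_3\,g(s),\qquad C_3=C_2^{-1}\sigma_0^{1-p}.
\]
Now fix a base interval $[T,T/\sigma_0]$ with $T=S_1/\sigma_0$, and let $B=\max_{[T,T/\sigma_0]}g<\infty$, which is finite by continuity from (A1). Every $s\ge T$ can be written as $s=\sigma_0^{-n}\tau$ with $\tau\in[T,T/\sigma_0]$ and $n\ge0$, so iterating gives $g(s)\le C_3^{\,n}B$. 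Since $\sigma_0^{-n}\le s/T$, we get $n\le \ln(s/T)/\ln(1/\sigma_0)$. Therefore
\[
g(s)\le B\,(s/T)^{\ln C_3/\ln(1/\sigma_0)}.
\]
The exponent is $\ln C_3/\ln(1/\sigma_0)=(p-1)-\ln C_2/\ln(1/\sigma_0)$.

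The main obstacle is that the constant $C_2$ contaminates this exponent, and the resulting power may not be below $2\cdot2^*-1$. The fix is to use the freedom in $\sigma$. Apply (A6) with $\sigma=\sigma_0^j$ for $j$ large; this is allowed since $\sigma_0^j\le\sigma_0$. Iterating in steps of $\sigma_0^{-j}$ then gives the exponent
\[
(p-1)-\frac{\ln C_2}{j\ln(1/\sigma_0)}.
\]
This tends to $p-1<2\cdot2^*-1$ as $j\to\infty$. Choose $j$ so that the exponent is some $q-1$ with $q<2\cdot 2^*$. Then $g(s)\le C s^{q-1}$ for large $s$, which gives (A3).

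If $C_2\ge1$ the correction term is nonpositive, so no such choice of $j$ is needed and $j=1$ already works.
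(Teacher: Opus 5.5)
Your proof is correct. Part $(i)$ is exactly the paper's argument: the same factorization of $k(t)/t^{2^*-1}$, with Lemma~\ref{lem:prelim_r_large_bounds} $(i)$, $(iv)$ and (A3).

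Part $(ii)$ takes a different route. You apply (A6) only at a fixed scale $\sigma=\sigma_0^j$ and iterate along a geometric sequence. This produces the exponent $(p-1)-\ln C_2/(j\ln(1/\sigma_0))$, and you must send $j\to\infty$ to overcome the contamination by $C_2$.

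The paper instead uses that (A6) holds uniformly for all $\sigma\in(0,\sigma_0]$, and lets $\sigma$ depend on $t$. For $t\ge S_1/\sigma_0$ it takes $\sigma=S_1/t$, which gives
\[
g(S_1)=g(\sigma t)\ge C_2\Bigl(\frac{S_1}{t}\Bigr)^{p-1}g(t),
\qquad\text{that is,}\qquad
g(t)\le C_2^{-1}S_1^{1-p}g(S_1)\,t^{p-1}.
\]
This is a single step with the sharp exponent $p-1$ and no iteration, so the obstacle you identified never arises.

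Your version does buy something: it only needs (A6) at one sufficiently small scale, which is a formally weaker hypothesis. The cost is a longer argument, a non-sharp exponent, and a maximum of $g$ over a base interval. That maximum uses continuity of $g$, whereas the paper only needs $g(S_1)<\infty$.

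One small imprecision: deriving $g(s)\le B(s/T)^{\ln C_3/\ln(1/\sigma_0)}$ from $n\le \ln(s/T)/\ln(1/\sigma_0)$ requires $C_3\ge 1$. If $C_3<1$, you simply get $g\le B$ on $[T,\infty)$, which is even better. Your closing remark about $C_2\ge1$ should be phrased that way. For large $j$ the issue disappears anyway, since $C_2^{-1}\sigma_0^{-j(p-1)}\ge1$.
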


\begin{proof}
By Lemma~\ref{lem:prelim_r_large_bounds} \textup{$(i)$, $(iv)$} and
\textup{(A3)}, we obtain 
\[
\lim\limits_{t\to+\infty}\frac{k(t)}{t^{2^*-1}}
=
\lim\limits_{t\to+\infty}
\frac{g(r(t))}{r(t)^{2\cdot 2^*-1}}
\cdot
\frac{r'(t)r(t)^{2\cdot 2^*-1}}{t^{2^*-1}}
=0.
\]
This proves \textup{$(i)$}.

For \(t\ge S_1/\sigma_0\), \(\sigma=S_1/t\in(0,\sigma_0]\). By (A6), we obtain
\[
g(S_1)=g(\sigma t)\ge C_2\left(\frac{S_1}{t}\right)^{p-1}g(t).
\]
Since \(g(S_1)>0\) by \textup{(A1)}, there exists $C>0$ such that $
g(t)\le C t^{p-1}$ for sufficiently large $t$. This together with \(p<2\cdot 2^*\) yields \textup{(A3)}.
\end{proof}

\begin{lemma}\label{lem:prelim_rescaled_dual}
Assume that \textup{(A1)}, \textup{(A2)}, and \textup{(A5)} hold. Set
\(q=\min\{m,\ell\}>2\). Then there exist \(A_1>1\) and \(C>0\) such that, for \(a\ge A_1\) and \(0<t\le1\),
\begin{equation}\label{eq:prelim_k_ratio}
0\le \frac{k(at)}{k(a)}\le Ct^{q/2-1},
\end{equation}
and
\begin{equation}\label{eq:prelim_k_ratio_derivative}
\left|\frac{d}{dt}\left(\frac{k(at)}{k(a)}\right)\right|
\le Ct^{q/2-2}.
\end{equation}
\end{lemma}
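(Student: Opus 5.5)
We write the ratio as a product of three factors,
\[
\frac{k(at)}{k(a)}=\frac{r'(at)}{r'(a)}\cdot\frac{g(r(at))}{g(r(a))},
\]
and bound each factor separately, using Lemma~\ref{lem:prelim_r_large_bounds} for $r$ and (A5) (applied with the base point $b=r(a)$) for $g$. First, $r'$ is decreasing by Lemma~\ref{lem:prelim_r_large_bounds}~$(v)$. Integrating $tr''/r'\ge-1$ from $at$ to $a$ gives $r'(at)/r'(a)\le a/(at)=1/t$. Second, by Lemma~\ref{lem:prelim_r_large_bounds}~$(iii)$ the function $\log r$ has logarithmic derivative between $1/(2t)$ and $1/t$. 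Integrating, we get
\[
t\le \frac{r(at)}{r(a)}\le t^{1/2}\qquad\text{for }0<t\le 1.
\]
Now choose $A_1$ with $r(A_1)\ge A_0$, so that for $a\ge A_1$ we may take $b=r(a)\ge A_0$ and $\tau=r(at)/r(a)\in[t,t^{1/2}]$. Then (A5) gives $g(r(at))/g(r(a))=\overline g_b(\tau)\le C_1\tau^{\ell-1}\le C_1t^{(\ell-1)/2}$. Combining the two bounds yields $k(at)/k(a)\le C_1t^{(\ell-1)/2-1}=C_1t^{\ell/2-3/2}$.

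This exponent falls short of $q/2-1$ by $1/2$, which is the expected main obstacle. The loss comes from the crude bound $r'(at)/r'(a)\le 1/t$, which is sharp only when $at$ is large. So I would split into two regimes.

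\emph{Regime $at\ge 1$.} Here both $at$ and $a$ are large, and by Lemma~\ref{lem:prelim_r_large_bounds}~$(i),(iv)$ we have $r'(s)\sim 2^{-3/4}s^{-1/2}$ and $r(s)\sim 2^{1/4}s^{1/2}$ uniformly for $s\ge1$, up to constants. Hence $r'(at)/r'(a)\le Ct^{-1/2}$ and $\tau\le Ct^{1/2}$. Then (A5) gives $\overline g_b(\tau)\le C\tau^{\ell-1}\le Ct^{(\ell-1)/2}$, so the product is $\le Ct^{\ell/2-1}$. If $\tau$ exceeds $1$ because of the constant, I would use $\tau\le1$ directly, which holds since $r$ is increasing.

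\emph{Regime $at\le 1$.} Here Lemma~\ref{lem:prelim_k_local_bounds} with $A=1$ gives $k(at)\le C(at)^{m-1}$. I need a lower bound on $k(a)$. From (A5) and positivity, $g(r(a))=g(b)\ge g(A_0)\,\overline g_b(A_0/b)^{-1}$, and $\overline g_b(A_0/b)\le C_1(A_0/b)^{\ell-1}$, so $g(b)\ge cb^{\ell-1}$. Therefore $k(a)\ge c\,a^{-1/2}a^{(\ell-1)/2}=c\,a^{\ell/2-1}$. This gives
\[
\frac{k(at)}{k(a)}\le C\,\frac{(at)^{m-1}}{a^{\ell/2-1}}=C(at)^{m-1-(q/2-1)}\,t^{q/2-1}\,a^{m-1-\ell/2}.
\]
Since $at\le1$ and $m-1-(q/2-1)>0$, while $a^{\ell/2-1}\ge a^{q/2-1}$ for $a\ge1$, this should reduce to $Ct^{q/2-1}$ after bookkeeping. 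The bookkeeping should work because $q/2-1\le m-1$ and we are free to trade powers of $a$ for powers of $1/t\ge a$.

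For the derivative bound \eqref{eq:prelim_k_ratio_derivative}, I would write
\[
\frac{d}{dt}\frac{k(at)}{k(a)}=\frac{a\,k'(at)}{k(a)},\qquad k'=r''g(r)+(r')^2g'(r).
\]
By Lemma~\ref{lem:prelim_r_large_bounds}~$(v)$, $|r''(s)|\le r'(s)/s$. This gives
\[
a|k'(at)|\le \frac{k(at)}{t}+a\,r'(at)^2\,|g'(r(at))|.
\]
The first term is $\le Ct^{q/2-2}$ by \eqref{eq:prelim_k_ratio}. For the second term I would use the same regime split. In the regime $at\ge1$, I use $g'(r(at))=g(b)\,\overline g_b{}'(\tau)/b$ with $|\overline g_b{}'(\tau)|\le C_1\tau^{\ell-2}$, together with $ar'(at)^2/(r'(a)b)\le Ct^{-1}$. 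In the regime $at\le1$, I use Lemma~\ref{lem:prelim_k_local_bounds} directly.
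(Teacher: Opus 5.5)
Your argument is correct, but it takes a detour that the paper avoids.

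\textbf{How the paper avoids the regime split.} Since $0<r'\le 1$ and $r'(s)s^{1/2}\to 2^{-3/4}$, one has $r'(s)\le Cs^{-1/2}$ for \emph{all} $s>0$. For $s\le 1$ this holds simply because $r'(s)\le 1\le s^{-1/2}$. Combined with $r'(a)\ge ca^{-1/2}$ for $a\ge A_1$, this gives $r'(at)/r'(a)\le Ct^{-1/2}$ uniformly in $t\in(0,1]$. Likewise $r(at)/r(a)\le Ct^{1/2}$, which you even obtained with constant $1$ from (iii). Applying (A5) at $b=r(a)$ then yields both \eqref{eq:prelim_k_ratio} and \eqref{eq:prelim_k_ratio_derivative} at once, exactly as in your regime $at\ge1$. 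In that argument (A2) plays no role, and it actually gives the exponent $\ell/2-1$.

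\textbf{Your diagnosis of the loss.} You say the bound $r'(at)/r'(a)\le 1/t$ is sharp only when $at$ is large, and that this is why a separate treatment is needed for small $at$. That is not the case: for $at\le 1$ one has $r'(at)/r'(a)\le 1/r'(a)\le Ca^{1/2}\le Ct^{-1/2}$. So the regime $at\le 1$ can be handled exactly like $at\ge 1$.

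\textbf{Your alternative for $at\le1$.} The treatment via Lemma~\ref{lem:prelim_k_local_bounds} and the lower bound $g(b)\ge cb^{\ell-1}$ is still valid. That lower bound follows from (A5) evaluated at $A_0/b$ together with $g(A_0)>0$. This route is also where $q=\min\{m,\ell\}$ enters naturally.

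Your displayed factor $a^{m-1-\ell/2}$ is miscomputed, however. The correct bound is $k(at)/k(a)\le Ca^{m-\ell/2}t^{m-1}$. If $m\ge\ell/2$, use $a\le t^{-1}$; otherwise use $a\ge1$. Either way you get at most $Ct^{\min\{m,\ell/2\}-1}\le Ct^{q/2-1}$. The same bookkeeping, starting from $a|k'(at)|\le Ca(at)^{m-2}$, handles the derivative bound in this regime.

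\textbf{The $r''$ term.} Your reduction of this term to $k(at)/(t\,k(a))$ via $|r''(s)|\le r'(s)/s$ is a slightly cleaner way to organize the derivative estimate than the paper's factor-by-factor bounds.
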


\begin{proof}
It follows from Lemma~\ref{lem:prelim_r_large_bounds} \textup{$(ii)$, $(iv)$} that there exists
\(C>0\) such that $r(s)\le Cs^{1/2}$ and $r'(s)\le Cs^{-1/2}$ 
for $s>0.$ Moreover, by Lemma~\ref{lem:prelim_r_large_bounds} \textup{$(i)$, $(iv)$}, we obtain that there exist \(A_1>1\) and \(c>0\) such that \(r(a)\ge A_0\) for \(a\ge A_1\) and $
r(s)\ge cs^{1/2},$ $r'(s)\ge cs^{-1/2}$ for $s\ge A_1.$ Thus, for \(a\ge A_1\) and \(0<t\le1\),
\begin{equation}\label{eq:prelim_r_rescaled_bounds}
\frac{r(at)}{r(a)}
\le \frac{C(at)^{1/2}}{ca^{1/2}}
\le Ct^{1/2},
\quad
\frac{r'(at)}{r'(a)}
\le \frac{C(at)^{-1/2}}{ca^{-1/2}}
\le Ct^{-1/2}.
\end{equation}

Since \(r\) is increasing, \(0<r(at)/r(a)\le1\). Using \textup{(A5)},
\(q\le\ell\), and \eqref{eq:prelim_r_rescaled_bounds}, we obtain 
\[
0\le \overline{g}_{r(a)}\left(\frac{r(at)}{r(a)}\right)
\le C\left(\frac{r(at)}{r(a)}\right)^{q-1}
\le Ct^{(q-1)/2}
\]
and
\[
\left|\overline{g}_{r(a)}'\left(\frac{r(at)}{r(a)}\right)\right|
\le C\left(\frac{r(at)}{r(a)}\right)^{q-2}
\le Ct^{(q-2)/2}.
\]
Combining this with \eqref{eq:prelim_r_rescaled_bounds}, we obtain
\[
\frac{k(at)}{k(a)}
=
\frac{r'(at)}{r'(a)}
\overline{g}_{r(a)}\left(\frac{r(at)}{r(a)}\right)
\le Ct^{q/2-1},
\]
which proves \eqref{eq:prelim_k_ratio}.

A direct computation yields  
\[
\frac{d}{dt}(\frac{k(at)}{k(a)})
=
\frac{a r''(at)}{r'(a)}
\overline{g}_{r(a)}(\frac{r(at)}{r(a)})
+
\frac{a(r'(at))^2}{r'(a)r(a)}
\overline{g}_{r(a)}'(\frac{r(at)}{r(a)}).
\]
Moreover, by Lemma~\ref{lem:prelim_r_large_bounds} \textup{$(iii)$, $(v)$} and
\eqref{eq:prelim_r_rescaled_bounds}, we obtain that, for any $a\ge A_1$ and $t\in (0,1]$,
\[
\left|\frac{a r''(at)}{r'(a)}\right|
=
\frac1t
\left|\frac{at r''(at)}{r'(at)}\right|
\frac{r'(at)}{r'(a)}
\le Ct^{-3/2},
\]
and
\[
\frac{a(r'(at))^2}{r'(a)r(a)}
=
\frac{a r'(a)}{r(a)}
\left(\frac{r'(at)}{r'(a)}\right)^2
\le Ct^{-1}, 
\]
which, together with \eqref{eq:prelim_r_rescaled_bounds}, yields
\[
\left|
\frac{d}{dt}\left(\frac{k(at)}{k(a)}\right)
\right|
\le
Ct^{-3/2}t^{(q-1)/2}
+
Ct^{-1}t^{(q-2)/2}
\le Ct^{q/2-2}. 
\]
This proves \eqref{eq:prelim_k_ratio_derivative}.
\end{proof}

Finally, We introduce the decomposition used in the asymptotically critical case. 
Fix \(R_\chi\ge S_\infty\), where \(S_\infty\) is as in \textup{(G3)}. Set
\(g_{\mathrm p}(s)=g(s)-s^{2\cdot 2^*-1}\) for \(s\ge0\). Let 
\(\chi\in C^\infty([0,\infty))\) be a cut-off function satisfying $
0\le\chi(s)\le1$, $\chi(s)=0$ if $0\le s\le R_\chi$ and $\chi(s)=1$ for $s\ge2R_\chi.$
For \(t\ge0\), define
\[
k_c(t)=r'(t)r(t)^{2\cdot 2^*-1},
\quad
k_{\mathrm p}(t)=r'(t)\chi(r(t))g_{\mathrm p}(r(t)),
\quad
k_{\mathrm r}(t)=k(t)-k_c(t)-k_{\mathrm p}(t),
\]
and
\[
K_c(t)=\int_0^t k_c(\tau)\,d\tau,
\quad
K_{\mathrm p}(t)=\int_0^t k_{\mathrm p}(\tau)\,d\tau,
\quad
K_{\mathrm r}(t)=\int_0^t k_{\mathrm r}(\tau)\,d\tau.
\]
Then \(k=k_c+k_{\mathrm p}+k_{\mathrm r}\) and
\(K=K_c+K_{\mathrm p}+K_{\mathrm r}\) on \([0,\infty)\).

\begin{lemma}\label{lem:prelim_gen_dual_decomposition}
Assume that \textup{(G1)}--\textup{(G3)} hold. Then the following properties hold.
\begin{itemize}
\item[$(i)$] \(k_{\mathrm r}\in C([0,\infty))\) and has compact
support.
\item[$(ii)$] There exist \(c,C, T_*>0\) such that $
c t^{q_1/2-1}
\le k_{\mathrm p}(t)
\le C t^{q_2/2-1}$ and $
c t^{q_1/2}-C
\le K_{\mathrm p}(t)
\le C(1+t^{q_2/2})$ for \(t>T_*\).

\item[$(iii)$] As \(t\to+\infty\), $$
K_c(t)
=
\frac{2^{\frac{2}{N-2}}}{2^*}t^{2^*}
-\frac{2^{\frac{2^*-1}{2} }}{8}t^{2^*-1}\ln t
+O(t^{2^*-1})$$ 
and $k_c(t)=2^{2/(N-2)}t^{2^*-1}+O(t^{2^*-2}\ln t).$
\end{itemize}
\end{lemma}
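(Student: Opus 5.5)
The plan is to treat the three items separately. The only tools needed are the identity $r'(t)=1/\sqrt{1+2r(t)^2}$, the exact substitution $s=r(\tau)$ (with $d\tau=h'(s)\,ds$), and the asymptotics in Lemma~\ref{lem:prelim_r_large_bounds}, especially $(i)$, $(iv)$, $(vi)$.

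For $(i)$, I first note continuity. The function $r$ is continuous on $[0,\infty)$ and $C^\infty$ on $(0,\infty)$, with $r'(t)=1/\sqrt{1+2r(t)^2}$, so $r'$ extends continuously to $t=0$. Also $g$ and $\chi$ are continuous. Hence $k$, $k_c$ and $k_{\mathrm p}$ are continuous on $[0,\infty)$, and so is $k_{\mathrm r}$. For the support, set $T_0=h(2R_\chi)$. For $t\ge T_0$ we have $r(t)\ge 2R_\chi$, so $\chi(r(t))=1$, and therefore
\[
k(t)=r'(t)\bigl(r(t)^{2\cdot2^*-1}+g_{\mathrm p}(r(t))\bigr)=k_c(t)+k_{\mathrm p}(t).
\]
Thus $k_{\mathrm r}\equiv0$ on $[T_0,\infty)$, and $k_{\mathrm r}$ has compact support.

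For $(ii)$, I first check that $k_{\mathrm p}\ge0$ everywhere. Since $R_\chi\ge S_\infty$, $\chi(r(t))\neq0$ forces $r(t)\ge R_\chi\ge S_\infty$. In that range (G3) gives $g_{\mathrm p}(r(t))\ge c_1r(t)^{q_1-1}>0$. Consequently $K_{\mathrm p}$ is nondecreasing and bounded on any compact interval.

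Now choose $T_*\ge T_0$ so large that, by Lemma~\ref{lem:prelim_r_large_bounds} $(i)$, $(iv)$,
\[
c\,t^{1/2}\le r(t)\le C\,t^{1/2},\qquad c\,t^{-1/2}\le r'(t)\le C\,t^{-1/2}\qquad\text{for }t\ge T_*.
\]
Combining these with (G3) gives, for $t\ge T_*$,
\[
k_{\mathrm p}(t)=r'(t)\,g_{\mathrm p}(r(t))\in\bigl[c\,t^{-1/2}t^{(q_1-1)/2},\;C\,t^{-1/2}t^{(q_2-1)/2}\bigr]=\bigl[c\,t^{q_1/2-1},\;C\,t^{q_2/2-1}\bigr].
\]
Writing $K_{\mathrm p}(t)=K_{\mathrm p}(T_*)+\int_{T_*}^t k_{\mathrm p}$ and integrating these power bounds, with $q_i/2>0$, yields $c\,t^{q_1/2}-C\le K_{\mathrm p}(t)\le C(1+t^{q_2/2})$.

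For $(iii)$, the key observation is that $K_c$ is explicit. The substitution $s=r(\tau)$ gives
\[
K_c(t)=\int_0^{r(t)}s^{2\cdot2^*-1}\,ds=\frac{r(t)^{2\cdot2^*}}{2\cdot2^*}.
\]
By Lemma~\ref{lem:prelim_r_large_bounds} $(vi)$,
\[
r(t)^2=\sqrt2\,t-\tfrac14\ln t+O(1),\qquad\text{that is,}\qquad r(t)^2=\sqrt2\,t\Bigl(1-\frac{\ln t}{4\sqrt2\,t}+O(t^{-1})\Bigr).
\]
Raising to the power $2^*$ and expanding $(1+x)^{2^*}=1+2^*x+O(x^2)$ gives
\[
r^{2\cdot2^*}=2^{2^*/2}t^{2^*}-\tfrac{2^*}{4}\,2^{(2^*-1)/2}t^{2^*-1}\ln t+O(t^{2^*-1}).
\]
Dividing by $2\cdot2^*$ and using $2^*/2-1=2/(N-2)$ produces exactly the stated expansion of $K_c$.

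For $k_c$, I would not differentiate the expansion. Instead I write directly
\[
k_c=\frac{r^{2\cdot2^*-1}}{\sqrt{1+2r^2}}=\frac{(r^2)^{2^*-1}}{\sqrt2}\bigl(1+O(r^{-2})\bigr).
\]
Inserting the expansion of $r^2$, with relative error $O(\ln t/t)$, gives
\[
k_c=2^{(2^*-2)/2}t^{2^*-1}+O(t^{2^*-2}\ln t),\qquad\text{and}\qquad \tfrac{2^*-2}{2}=\tfrac{2}{N-2}.
\]

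The only delicate point is the bookkeeping of constants and error orders in $(iii)$. The exact formula for $K_c$ removes any need to integrate asymptotic expansions, so I expect it to be routine.
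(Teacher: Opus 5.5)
Your proposal is correct and follows essentially the same route as the paper. The steps match: choosing $T_*$ with $r(t)\ge 2R_\chi\ge S_\infty$ kills $k_{\mathrm r}$ and reduces $k_{\mathrm p}$ to $r'(t)g_{\mathrm p}(r(t))$; you then combine (G3) with Lemma~\ref{lem:prelim_r_large_bounds} $(i)$, $(iv)$ for two-sided power bounds and integrate over $[T_*,t]$; finally you expand $r^{2\cdot2^*}$ and $k_c$ via Lemma~\ref{lem:prelim_r_large_bounds} $(vi)$. Your one addition is to state explicitly the identity $K_c(t)=r(t)^{2\cdot2^*}/(2\cdot2^*)$, which the paper's ``which implies $(iii)$'' uses only implicitly.
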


\begin{proof}
By Lemma \ref{lem:prelim_r_large_bounds} $(i)$, we can choose \(T_*>0\) sufficiently large that \(r(t)\ge 2 R_{\chi} \ge S_\infty\) for $t\ge T_*$. Thus \(k_{\mathrm p}(t)=r'(t)g_{\mathrm p}(r(t))\) and $k_{\mathrm r}(t)=0$ for \(t\ge T_*\). Combining this with (G1), $(i)$ follows.

It follows from \textup{(G3)}
and Lemma~\ref{lem:prelim_r_large_bounds} \textup{$(i)$, $(iv)$} that there exist $c,C>0$ such that
\begin{equation}\label{eq:prelim_kp_bounds}
c t^{q_1/2-1}
\le k_{\mathrm p}(t)
\le C t^{q_2/2-1}\quad\text{for }t\ge T_*.
\end{equation}
Integrating \eqref{eq:prelim_kp_bounds} over \([T_*,t]\) yields $
c t^{q_1/2}-C
\le K_{\mathrm p}(t)
\le C(1+t^{q_2/2})$ for $t\ge T_*.$ This proves $(ii)$.

By Lemma~\ref{lem:prelim_r_large_bounds} \textup{$(vi)$} and \(r'(t)=(1+2r(t)^2)^{-1/2}\), we obtain 
\[
r(t)^{2\cdot 2^*}
=2^{\frac{2^*}{2}}t^{2^*}
-\frac{2^*}{4}2^{\frac{2^*-1}{2}}t^{2^*-1}\ln t
+O(t^{2^*-1}),
\]
and
\[
k_c(t)=r'(t)r(t)^{2\cdot 2^*-1}
=\frac1{\sqrt2}r(t)^{2\cdot 2^*-2}\bigl(1+O(r(t)^{-2})\bigr)=2^{2/(N-2)}t^{2^*-1}+O(t^{2^*-2}\ln t),
\] which implies $(iii)$.
\end{proof}

\section{Boundedness of \texorpdfstring{$Z$}{Z} and \texorpdfstring{$\widetilde Z$}{Z tilde}}\label{sec:gen-boundedness}

In this section, we prove the boundedness of \(Z\) and \(\widetilde Z\) under
the assumptions of Theorems~\ref{thm:1.3} and \ref{thm:1.5}.
Let \(v_\alpha\) denote the unique radial classical solution of
\eqref{eq:intro_radial_ode}. We first establish the regularity of positive radial weak solutions.

\begin{lemma}
Assume the hypotheses of Theorem~\ref{thm:1.3} or of
Theorem~\ref{thm:1.5}. If \(v\in D^{1,2}(\R^N)\) is a positive radial weak
solution of \eqref{eq:intro_dual}, then $
v\in L^\infty(\R^N)\cap C^2(\R^N).$
\end{lemma}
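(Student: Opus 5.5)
The plan is to prove $L^\infty$ bounds first, by the radial structure plus the growth of $k$ at infinity, and then obtain $C^2$ regularity from the radial ODE. For the first step we record the growth of $k$ at infinity. Under (A1)--(A3), Lemma~\ref{lem:prelim_A_consequences} $(i)$ gives $k(t)=o(t^{2^*-1})$ as $t\to+\infty$. Under (G1)--(G3), Lemma~\ref{lem:prelim_gen_dual_decomposition} gives $k(t)\le Ct^{2^*-1}$ for large $t$. In both cases Lemma~\ref{lem:prelim_k_local_bounds} controls $k$ near $0$, so
\[
0\le k(t)\le C\bigl(t^{m-1}+t^{2^*-1}\bigr)\le C'\bigl(t+t^{2^*-1}\bigr),\quad t>0 .
\]
The nonlinearity therefore has at most critical growth, and $k(v)/(1+v)\in L^{N/2}_{\mathrm{loc}}$ whenever $v\in L^{2^*}$.

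Next we show boundedness away from the origin. Since $v\in D^{1,2}(\R^N)$ is radial, the Strauss-type radial lemma gives
\[
|v(x)|\le C|x|^{-(N-2)/2}\|\nabla v\|_2 \quad\text{for a.e. } x\neq0 .
\]
Hence $v\in L^\infty(\R^N\setminus B_\delta)$ for every $\delta>0$, and $v$ is continuous on $\R^N\setminus\{0\}$ (a radial $D^{1,2}$ function is absolutely continuous in $\rho>0$). It remains to control $v$ near the origin.

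We handle the origin by Brezis--Kato / Moser iteration. Write $-\Delta v=a(x)v$ with $a=k(v)/v$. Then $0\le a\le C(1+v^{2^*-2})$, and since $v^{2^*-2}\in L^{N/2}(\R^N)$ we get $a\in L^{N/2}_{\mathrm{loc}}$. The Brezis--Kato lemma then yields $v\in L^p_{\mathrm{loc}}$ for all $p<\infty$. Consequently $k(v)\in L^{p}_{\mathrm{loc}}$ for some $p>N/2$, and standard elliptic estimates (De Giorgi / Moser, or $W^{2,p}$ estimates with a bootstrap) give $v\in L^\infty(B_1)$. Combined with the previous step, $v\in L^\infty(\R^N)$. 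This is the main obstacle. In the critical case (G3), $a$ is only in $L^{N/2}$ and not better, so the smallness of $\|a\|_{L^{N/2}(B_\epsilon)}$ on small balls must be used, as in Brezis--Kato. That smallness holds by absolute continuity of the integral.

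Finally we derive $C^2$ regularity. Once $v$ is bounded, $k(v)\in L^\infty$, so $v\in W^{2,p}_{\mathrm{loc}}$ for all $p$ and hence $v\in C^{1,\beta}$. Since $k\in C^1((0,\infty))$ and $k$ is locally Lipschitz on $[0,\infty)$ by Lemma~\ref{lem:prelim_k_local_bounds}, $k(v)$ is Hölder continuous, and Schauder theory gives $v\in C^{2,\beta}_{\mathrm{loc}}(\R^N)$. Alternatively, one can argue in the radial variable. The function $v$ satisfies $(\rho^{N-1}v')'=-\rho^{N-1}k(v)$ on $(0,\infty)$ with bounded right-hand side, so $\rho^{N-1}v'(\rho)\to0$ as $\rho\to0$ (using $v\in D^{1,2}$ to exclude a fundamental-solution component). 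This makes $v$ the solution of \eqref{eq:intro_radial_ode} with $\alpha=\lim_{\rho\to0}v(\rho)$, which is $C^2$ by the standard theory for that ODE.
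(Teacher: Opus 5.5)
Your proposal is correct and follows the paper's route. Like the paper, you first obtain the critical growth bound on $k$ from Lemmas~\ref{lem:prelim_k_local_bounds}, \ref{lem:prelim_A_consequences}, and \ref{lem:prelim_gen_dual_decomposition}, and then apply standard elliptic regularity, which you spell out via the radial lemma, Brezis--Kato, and Schauder estimates. One small omission: under the hypotheses of Theorem~\ref{thm:1.3}~$(ii)$, \textup{(A3)} is not assumed, so before Lemma~\ref{lem:prelim_A_consequences}~$(i)$ gives the growth bound you must invoke Lemma~\ref{lem:prelim_A_consequences}~$(ii)$, which shows that \textup{(A1)} and \textup{(A6)} imply \textup{(A3)}.
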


\begin{proof}
Under the assumptions of Theorem~\ref{thm:1.3} \textup{$(i)$},
Lemma~\ref{lem:prelim_A_consequences} \textup{$(i)$} yields
\(k(s)=o(s^{2^*-1})\) as \(s\to+\infty\). Under the assumptions of
Theorem~\ref{thm:1.3} \textup{$(ii)$},
Lemma~\ref{lem:prelim_A_consequences} \textup{$(ii)$} first yields
\textup{(A3)}, and Lemma~\ref{lem:prelim_A_consequences} \textup{$(i)$} then applies. Under the assumptions of Theorem~\ref{thm:1.5}, Lemma~\ref{lem:prelim_gen_dual_decomposition}
yields \(k(s)\le Cs^{2^*-1}\) for all sufficiently large \(s\). This together with
Lemma~\ref{lem:prelim_k_local_bounds} yields 
\(k(s)\le C s^{2^*-1}\) for \(s>0\). 
Moreover, \eqref{eq:prelim_k_small_bounds} and the zero extension of \(g\) 
yield
\(k\in C^1(\mathbb R)\). Standard elliptic regularity yields
\(v\in L^\infty(\R^N)\cap C^2(\R^N)\).
\end{proof}

\begin{lemma}\label{lem:solution_sets_nonempty_decay}
Let \(N\ge5\).
\begin{enumerate}
\item[\textup{$(i)$}]
If \textup{(A1)}--\textup{(A3)} or \textup{(G1)}--\textup{(G3)} hold, then
\(\mathcal S\neq\varnothing\), and, for any \(\alpha\in Z\),
\[
v_\alpha(\rho)=O(\rho^{2-N}),
\quad |v_\alpha'(\rho)|=O(\rho^{1-N})
\quad\text{as }\rho\to\infty.
\]
\item[\textup{$(ii)$}]
If \textup{(A1)}, \textup{(A2)}, and
\textup{(A4)}--\textup{(A7)} hold, then
\(\widetilde{\mathcal S}\neq\varnothing\).
\end{enumerate}
\end{lemma}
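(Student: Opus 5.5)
Part $(i)$ splits into existence of a least energy positive radial solution of \eqref{eq:1.1} and decay of the associated dual solutions.

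\textbf{Existence under (A1)--(A3).} The plan is to work with the dual problem \eqref{eq:intro_dual} and transfer back through $u=r(v)$. By Lemma~\ref{lem:prelim_k_local_bounds}, $k(t)\le C t^{m-1}$ near $0$ with $m>2^*$, so $\limsup_{t\to0}k(t)/t^{2^*-1}\le 0$. By Lemma~\ref{lem:prelim_A_consequences} $(i)$, $k(t)=o(t^{2^*-1})$ at infinity. Also $K>0$ for $t>0$ by (A1). These are exactly the hypotheses of \cite[Theorem~4]{MR695535} in the zero mass case $N\ge3$. That theorem gives a positive radial decreasing ground state $v_0\in D^{1,2}$, and it has least energy among all solutions, so also among positive ones.

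\textbf{Existence under (G1)--(G3).} Instead, use the appendix: the mountain pass geometry together with $(PS)_c$ compactness for $c<\mathcal E_*$. The mountain pass level is strictly below $\mathcal E_*$ by testing Aubin--Talenti bubbles against the positive perturbation $K_{\mathrm p}\ge ct^{q_1/2}-C$ from Lemma~\ref{lem:prelim_gen_dual_decomposition}. This produces a positive solution. Minimizing $J$ over the set of positive solutions then gives a minimizer: a minimizing sequence is bounded via the Pohozaev identity, stays below $\mathcal E_*$, and is compact by the same $(PS)$ argument; the minimizer can be taken radial by symmetrization/radial symmetry.

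\textbf{Back to \eqref{eq:1.1}.} In both cases we still need $v\in L^2$ (needed to land in $H^1$) so that $r(v)\in X$; this follows from the decay below because $N\ge5$. Then the one-to-one correspondence yields $r(v)\in\mathcal S$, and $\mathcal S\neq\varnothing$.

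\textbf{Decay for $\alpha\in Z$.} Since $v_\alpha\in D^{1,2}$ is radial and decreasing (Remark~\ref{rem:intro_radial_monotonicity}), the Strauss-type bound gives $v_\alpha(\rho)\le C\rho^{-(N-2)/2}\to0$. Hence for large $\rho$, Lemma~\ref{lem:prelim_k_local_bounds} gives $k(v_\alpha)\le Cv_\alpha^{m-1}$, with $(N-2)(m-1)/2>N/2+1$. Write
\[
\rho^{N-1}v_\alpha'(\rho)=-\int_0^\rho s^{N-1}k(v_\alpha)\,ds,
\]
and bootstrap. If $v_\alpha\le C\rho^{-\beta}$, then $s^{N-1}k(v_\alpha)\le Cs^{N-1-\beta(m-1)}$. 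Once $\beta(m-1)>N$ this is integrable, so $|v_\alpha'|\le C\rho^{1-N}$, and integrating from $\rho$ to $\infty$ (using $v_\alpha(\infty)=0$) gives $v_\alpha\le C\rho^{2-N}$. Otherwise each step improves the exponent by a fixed amount $\beta(m-2)-2$. This is positive starting from $\beta=(N-2)/2$, since $m>2^*$ gives $(N-2)(m-2)/2>2$, so finitely many steps suffice. The main care is in making this improvement step uniform.

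\textbf{Part $(ii)$.} Under (A6), Lemma~\ref{lem:prelim_A_consequences} $(ii)$ gives (A3), so the existence from part $(i)$ applies. The resulting $v_0$ is positive radial and lies in $H^1$ by the decay $O(\rho^{2-N})$ and $N\ge5$. Hence $r(v_0)\in\widetilde{\mathcal S}$. The main obstacle overall is the existence in the critical case (G1)--(G3), specifically the energy estimate below $\mathcal E_*$; the log correction in $K_c$ is dominated by the $t^{q_1/2}$ perturbation since $q_1/2>2^*-1$.
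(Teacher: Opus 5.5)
Your existence argument under (A1)--(A3), the passage from $v$ to $r(v)\in X$, and part $(ii)$ all match the paper.

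Your decay argument is a genuinely different route, and it is correct. The paper quotes Flucher--M\"uller \cite[Lemma~4 and Theorem~5]{MR1617704} for ground states, using only $0\le K(s)\le Cs^{2^*}$. You instead combine three ingredients: the radial bound $v_\alpha\le C\rho^{-(N-2)/2}$, the identity $\rho^{N-1}v_\alpha'=-\int_0^\rho s^{N-1}k(v_\alpha)\,ds$, and $k(s)\le Cs^{m-1}$ with $m>2^*$. The gain $\beta(m-2)-2$ is positive already at $\beta=(N-2)/2$ precisely because $m>2^*$. The borderline case $\beta(m-1)=N$ only costs a logarithm, and no uniformity in $\alpha$ is needed for this lemma. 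Your route is self-contained and applies to every positive radial $D^{1,2}$ solution, not only to ground states. It is essentially the bootstrap the paper itself runs in Lemma~\ref{lem:tail_bootstrap}.

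The gap is in your existence argument under (G1)--(G3).

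First, mountain-pass geometry plus the appendix compactness does not by itself produce a solution. Proposition~\ref{prop:J_PS_below_cstar} applies only to \emph{bounded} Palais--Smale sequences. Under (G1)--(G3) no Ambrosetti--Rabinowitz condition is available, since $g$ may even vanish on an intermediate interval. The paper obtains boundedness from the augmented functional $\Phi(\theta,v)=J(v(e^{-\theta}\cdot))$. This yields a Palais--Smale sequence with $P(v_n)\to0$, so $\|\nabla v_n\|_2^2=NJ(v_n)-P(v_n)$ stays bounded.

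Second, your step of minimizing $J$ over all positive solutions does not work as stated. Proposition~\ref{prop:J_PS_below_cstar} is a compactness result in $D^{1,2}_{\mathrm{rad}}$, so it says nothing about a non-radial minimizing sequence, where translations destroy compactness. Schwarz symmetrization of a critical point is not a critical point, so ``taking the minimizer radial'' is unjustified. If you minimize only over radial solutions, you must still rule out non-radial solutions with lower energy.

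The missing idea is the mountain-pass characterization of the least energy level. For any nontrivial critical point $w$, the dilation path $t\mapsto w(\cdot/t)$, after reparametrization, lies in $\Gamma$. By the Pohozaev identity, $J(w(\cdot/t))\le J(w)$ for all $t>0$, so $E\le J(w)$. Combined with a radial critical point at level $E$, this gives the least-energy property directly, with no minimization over the solution set. This is exactly Proposition~\ref{prop:sec3_groundstate_identification}, which the paper cites. Replace your minimization step by that proposition, then apply your bootstrap to get $v\in H^1$, and the argument is complete.
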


\begin{proof}
Under \textup{(A1)}--\textup{(A3)}, it follows from Lemmas~\ref{lem:prelim_k_local_bounds} and
\ref{lem:prelim_A_consequences} \textup{$(i)$} that \eqref{eq:intro_dual} satisfies the hypotheses of
\cite[Theorem~4]{MR695535}. Thus, \eqref{eq:intro_dual} admits a least energy positive radial solution $v$. By
Lemmas~\ref{lem:prelim_k_local_bounds} and \ref{lem:prelim_A_consequences} \textup{$(i)$}, we obtain that $0\le K(s)\le Cs^{2^*}$ for $s\ge0.$ \cite[Lemma~4 and Theorem~5]{MR1617704} yields that 
\begin{equation}
v(\rho)\le C\rho^{2-N},
\quad
|v'(\rho)|\le C\rho^{1-N}
\end{equation}
for all sufficiently large \(\rho\). Since \(N\ge5\),
\(v\in H^1(\mathbb R^N)\), and hence \(r(v)\in X\). The same estimates hold with \(v\) replaced by \(v_\alpha\). This proves $(i)$. The same conclusion holds under \textup{(G1)}--\textup{(G3)} by 
Proposition~\ref{prop:sec3_groundstate_identification}. 

By Lemma~\ref{lem:prelim_A_consequences} \textup{$(ii)$}, (A1) and (A6) yield
\textup{(A3)}. Noting that 
\(\mathcal S\subset\widetilde{\mathcal S}\), we have \(\widetilde{\mathcal S}\neq\varnothing\).
\end{proof}

\begin{proposition}\label{prop:Z_bounded_general}
Assume that \textup{(A1)}--\textup{(A3)} hold. Then the set \(Z\) is bounded.
\end{proposition}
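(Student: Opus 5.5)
The plan is to show that every $\alpha\in Z$ produces a solution $v_\alpha$ with a uniform bound on $\|v_\alpha\|_{2^*}$. A direct local estimate from \eqref{eq:intro_radial_ode} near $\rho=0$ will then show that a large $\alpha$ forces $\|v_\alpha\|_{2^*}$ to be large, which gives boundedness.

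\emph{Step 1: uniform energy bounds.} Let $J(v)=\frac12\int|\nabla v|^2-\int K(v)$ be the dual functional. For $\alpha\in Z$, $v_\alpha$ is a least energy positive radial solution, so $J(v_\alpha)=E$, a fixed number. The decay $v_\alpha=O(\rho^{2-N})$, $|v_\alpha'|=O(\rho^{1-N})$ from Lemma~\ref{lem:solution_sets_nonempty_decay}, together with $K(s)\le Cs^{2^*}$, justifies the Pohozaev identity
\[
\tfrac{N-2}{2}\int|\nabla v_\alpha|^2=N\int K(v_\alpha).
\]
This can be seen by multiplying the ODE by $\rho^N v_\alpha'$ and integrating; the boundary terms vanish by the decay. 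Substituting into $J$ gives $J(v_\alpha)=\frac1N\int|\nabla v_\alpha|^2=E$. The Sobolev inequality then yields $\int v_\alpha^{2^*}\le C_*$, with $C_*$ independent of $\alpha\in Z$.

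\emph{Step 2: lower bound near the origin.} Set $M(\alpha)=\max_{[0,\alpha]}k$. Since $v_\alpha$ is decreasing with $0<v_\alpha\le\alpha$ (Remark~\ref{rem:intro_radial_monotonicity}), the identity $\rho^{N-1}v_\alpha'=-\int_0^\rho s^{N-1}k(v_\alpha)\,ds$ gives $|v_\alpha'(\rho)|\le \rho M(\alpha)/N$. Hence
\[
v_\alpha(\rho)\ge \alpha-\frac{\rho^2M(\alpha)}{2N}.
\]
Choose $\rho_\alpha^2=N\alpha/M(\alpha)$. Then $v_\alpha\ge\alpha/2$ on $B_{\rho_\alpha}$, and so
\[
C_*\ge\int_{B_{\rho_\alpha}}v_\alpha^{2^*}\ge c\,\alpha^{2^*}\Bigl(\frac{\alpha}{M(\alpha)}\Bigr)^{N/2}.
\]

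\emph{Step 3: contradiction for large $\alpha$.} This is the key step. By Lemma~\ref{lem:prelim_A_consequences}$(i)$ and Lemma~\ref{lem:prelim_k_local_bounds}, for every $\varepsilon>0$ there is $C_\varepsilon$ with $k(s)\le\varepsilon s^{2^*-1}+C_\varepsilon$. Therefore $M(\alpha)\le 2\varepsilon\alpha^{2^*-1}$ once $\alpha$ is large. Substituting and using the exponent identity $(2^*-2)\frac N2=2^*$, we obtain
\[
\alpha^{2^*}\Bigl(\frac{\alpha}{M(\alpha)}\Bigr)^{N/2}\ge c\,\varepsilon^{-N/2}.
\]
Fix $\varepsilon$ so small that $c\,\varepsilon^{-N/2}>C_*$. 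Then every sufficiently large $\alpha$ violates Step~2, so $Z$ is bounded.

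The main obstacle I expect is Step~1. It requires justifying the Pohozaev identity rigorously and confirming that $E$ is a single finite level shared by all elements of $Z$. This follows from the definition of $\mathcal S$ and the correspondence $u=r(v)$, and the decay estimates already established handle the boundary terms. The $\varepsilon$-smallness in Step~3 is exactly where the strict subcriticality (A3), via Lemma~\ref{lem:prelim_A_consequences}$(i)$, enters.
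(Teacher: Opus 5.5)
Your proposal is correct and follows essentially the same route as the paper. You both use the Pohozaev identity plus Sobolev to get a uniform $L^{2^*}$ bound, and a plateau $v_\alpha\ge\alpha/2$ on a ball of radius of order $\varepsilon^{-1/2}\alpha^{-2/(N-2)}$ that forces $\int v_\alpha^{2^*}\gtrsim\varepsilon^{-N/2}$. The only difference is cosmetic: you bound $k$ on all of $[0,\alpha]$ via $k(s)\le\varepsilon s^{2^*-1}+C_\varepsilon$, which lets you skip the paper's first-crossing argument on $[0,R_n]$.
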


\begin{proof}
Suppose by contradiction that \(Z\) is unbounded. Then there exists a sequence 
\(\{\alpha_n\}\subset Z\) such that \(\alpha_n\to+\infty\). Set
\(v_n=v_{\alpha_n}\). By the Pohozaev identity, $
J(v_n)=\frac1N\int_{\R^N}|\nabla v_n|^2\,dx=E.$
Hence \(\int_{\R^N}|\nabla v_n|^2\,dx=NE\), and the Sobolev
inequality yields
\begin{equation}\label{eq:Z_bounded_L2star_short}
\int_{\R^N}v_n^{2^*}\,dx\le C
\quad\text{for all }n.
\end{equation}

By Lemma~\ref{lem:prelim_A_consequences} \textup{$(i)$}, for any \(\varepsilon>0\), there exists \(T_\varepsilon>0\) such that
\(0\le k(t)\le \varepsilon t^{2^*-1}\) for all \(t\ge T_\varepsilon\).
For sufficiently large \(n\), we have \(\alpha_n\ge 2T_\varepsilon\). Define
\(R_n=(N/(4\varepsilon))^{1/2}\alpha_n^{-2/(N-2)}\).
We claim that
\begin{equation}\label{eq:Z_bounded_plateau_short}
v_n(\rho)\ge \frac{\alpha_n}{2}
\quad\text{for all }0\le \rho\le R_n.
\end{equation}
Indeed, suppose that there exists a first point
\(\rho_n\in(0,R_n]\) such that \(v_n(\rho_n)=\alpha_n/2\). Then
\(T_\varepsilon\le \alpha_n/2\le v_n(\rho)\le \alpha_n\)
for \(0\le\rho\le\rho_n\). Therefore, for $0\le\rho\le\rho_n$, 
\begin{equation}\label{eq:Z_bounded_derivative_bound}
-v_n'(\rho)
=
\rho^{1-N}\int_0^\rho s^{N-1}k(v_n(s))\,ds
\le
\frac{\varepsilon}{N}\alpha_n^{2^*-1}\rho.
\end{equation}
Integrating \eqref{eq:Z_bounded_derivative_bound} over
\([0,\rho_n]\), we obtain
\[
\frac{\alpha_n}{2}
=
\alpha_n-v_n(\rho_n)
\le
\frac{\varepsilon}{2N}\alpha_n^{2^*-1}\rho_n^2
\le
\frac{\varepsilon}{2N}\alpha_n^{2^*-1}R_n^2
=
\frac18 \alpha_n.
\]
This contradiction proves
\eqref{eq:Z_bounded_plateau_short}.

It follows from \eqref{eq:Z_bounded_plateau_short} and the definition of
\(R_n\) that
\[
\int_{\R^N}v_n^{2^*}\,dx
\ge
\int_{B_{R_n}(0)}v_n^{2^*}\,dx
\ge
\left(\frac{\alpha_n}{2}\right)^{2^*}|B_{R_n}(0)|
=
C_N\,\varepsilon^{-N/2},
\]
where \(C_N>0\) depends only on \(N\). Since \(\varepsilon>0\) was arbitrary,
the lower bound \(C_N\varepsilon^{-N/2}\) contradicts 
\eqref{eq:Z_bounded_L2star_short}. Hence \(Z\) is
bounded.
\end{proof}

\begin{proposition}\label{prop:Ztilde_bounded_general}
Assume that \textup{(A1)}, \textup{(A2)}, and \textup{(A5)}--\textup{(A7)} hold.
Then the set \(\widetilde Z\) is bounded.
\end{proposition}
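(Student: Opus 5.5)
We argue by contradiction via a blow-up. Suppose there is a sequence \(\{\alpha_n\}\subset\widetilde Z\) with \(\alpha_n\to+\infty\), and set \(v_n=v_{\alpha_n}\). By Remark~\ref{rem:intro_radial_monotonicity}, \(v_n\) is strictly decreasing, so \(\|v_n\|_\infty=v_n(0)=\alpha_n\). Normalize
\[
w_n(y)=\frac{v_n(\lambda_n y)}{\alpha_n},\qquad \lambda_n^2=\frac{\alpha_n}{k(\alpha_n)} .
\]
Then \(0<w_n\le 1\), \(w_n(0)=1\), and \(w_n\) solves
\[
-\Delta w_n=\frac{k(\alpha_n w_n)}{k(\alpha_n)}=:k_n(w_n).
\]
By Lemma~\ref{lem:prelim_rescaled_dual}, for \(n\) large we have \(0\le k_n(t)\le Ct^{q/2-1}\) and \(|k_n'(t)|\le Ct^{q/2-2}\) on \((0,1]\). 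So the \(k_n\) are uniformly bounded on \([0,1]\) and equicontinuous on compact subsets of \((0,1]\). Arzel\`a--Ascoli then gives a subsequence with \(k_n\to k_\infty\) locally uniformly on \((0,1]\), and uniformly on \([0,1]\) because of the bound \(Ct^{q/2-1}\) with \(q>2\). The limit satisfies \(0\le k_\infty(t)\le Ct^{q/2-1}\).

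Next we pass to the limit in the equation. Since \(w_n\) is radial, bounded by \(1\), and has bounded right-hand side, the radial ODE gives \(|w_n'(\rho)|\le C\rho/N\) locally. Standard elliptic estimates then give \(w_n\to w\) in \(C^2_{\mathrm{loc}}(\R^N)\), where \(w\) is radial, nonincreasing, \(w(0)=1\), \(0\le w\le1\), and
\[
-\Delta w=k_\infty(w)\quad\text{in }\R^N.
\]
We must also check that \(w\) is positive and nontrivial, and that \(k_\infty\) is genuinely nonzero and has the right growth. This is where (A6) and (A7) enter. Writing
\[
k_n(t)=\frac{r'(\alpha_nt)}{r'(\alpha_n)}\,\overline g_{r(\alpha_n)}\Bigl(\frac{r(\alpha_nt)}{r(\alpha_n)}\Bigr),
\]
Lemma~\ref{lem:prelim_r_large_bounds} \((i)\) and \((iv)\) show that the prefactor tends to \(t^{-1/2}\) and the argument tends to \(t^{1/2}\). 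Assumption (A6) yields the lower bound \(\overline g_a(s)\ge C_2s^{p-1}\) for \(s\le\sigma_0\), so \(k_\infty(t)\ge ct^{p/2-1}\) near \(0\), with \(p/2\in(2,2^*)\). Assumption (A7) gives \(sg(s)\le(2\cdot2^*+\varepsilon)G(s)\) for large \(s\), which passes to the limit as an Ambrosetti--Rabinowitz type inequality for \(k_\infty\) with exponent at most \(2^*\). This is the subcritical-in-the-limit structure required by the Liouville theorem.

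For positivity, \(w>0\) follows from the strong maximum principle, since \(w(0)=1\) and \(k_\infty\ge0\). The contradiction then comes from the nonexistence result \cite[Theorem~2.2 \((ii)\)]{MR4701352}: under the growth conditions just verified (with \(k_\infty\) continuous, positive on \((0,1]\), satisfying \(t k_\infty(t)\le 2^* K_\infty(t)\) and having the lower power bound near zero), there is no positive radial classical solution of \(-\Delta w=k_\infty(w)\) in \(\R^N\). Hence \(\widetilde Z\) is bounded.

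The main obstacle is matching the limit profile to the exact hypotheses of \cite[Theorem~2.2 \((ii)\)]{MR4701352}. Two points need care. First, \(k_\infty\) may not be a pure power, since \(\overline g_a\) need not converge, so the limit must be taken along subsequences and only inequalities inherited from (A6)--(A7) can be used. Second, that theorem may need a Pohozaev-type inequality \(NK_\infty(t)-\frac{N-2}{2}tk_\infty(t)\ge0\), or a finiteness condition on \(w\). I would first check whether the inequality \(t k_\infty\le 2^*K_\infty\) obtained from (A7) in the \(t\)-variable (using \(r\sim\sqrt{2}\,\sqrt t\), which halves the exponent \(2\cdot2^*\)) is exactly what that Liouville result requires. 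If a decay or energy condition is also needed, I would use the monotonicity of \(w\) and the lower bound on \(k_\infty\) near \(0\) to show that \(w\to0\) at infinity with finite energy in the limit.
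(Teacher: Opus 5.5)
Your proposal is correct and follows essentially the same route as the paper: the same blow-up with $d_n=\sqrt{\alpha_n/k(\alpha_n)}$, compactness of the rescaled nonlinearities from Lemma~\ref{lem:prelim_rescaled_dual}, the lower bound $k_\infty(t)\ge c\,t^{p/2-1}$ near $0$ from (A6), and the inequality $t\,k_\infty(t)\le 2^*K_\infty(t)$ from (A7). That inequality is exactly the Pohozaev-type condition $NK_\infty-\frac{N-2}{2}t\,k_\infty\ge0$ you wondered about, and with it the contradiction comes from \cite[Theorem~2.2 $(ii)$]{MR4701352}. The one step to add is that, since $k_\infty$ lives only on $[0,1]$, the paper extends it by the constant $k_\infty(1)$ on $[1,\infty)$ and checks that this extension preserves $t\,\widetilde k_\infty\le 2^*\widetilde K_\infty$; no positivity of $k_\infty$ on all of $(0,1]$ (which you asserted without proof) and no decay or energy condition on $w$ is needed.
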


\begin{proof}
Suppose by contradiction that \(\widetilde Z\) is unbounded. Then there exists
a sequence \(\{\alpha_n\}\subset \widetilde Z\) such that \(\alpha_n\to+\infty\).
Let \(v_n=v_{\alpha_n}\) and define
\[
d_n=\sqrt{\frac{\alpha_n}{k(\alpha_n)}},
\quad
\omega_n(x)=\frac{1}{\alpha_n}v_n(d_n x),
\quad
h_n(s)=\frac{k(\alpha_n s)}{k(\alpha_n)}
\quad(s\in[0,1]).
\]
Then \(\omega_n\) solves
\begin{equation}\label{eq:prop_Ztilde_bounded_blowup_eq_corrected}
-\Delta \omega_n=h_n(\omega_n)
\quad\text{in }\R^N,
\end{equation}
and, since \(v_n\) is positive and radially decreasing,
\begin{equation}\label{eq:prop_Ztilde_bounded_blowup_bounds_corrected}
\omega_n(0)=1,
\quad
0<\omega_n\le1
\quad\text{in }\R^N.
\end{equation}

We complete the proof in the following five steps.

\medskip
\noindent Step 1. Precompactness of \(\{h_n\}\) in \(C([0,1])\).

Since \(\alpha_n\to+\infty\),
we have \(\alpha_n\ge A_1\) for all sufficiently large \(n\), where \(A_1\) is
the constant in Lemma~\ref{lem:prelim_rescaled_dual}. By
Lemma~\ref{lem:prelim_rescaled_dual}, there exists a constant \(C>0\),
independent of \(n\), such that, for all sufficiently large \(n\),
\begin{equation}\label{eq:prop_Ztilde_bounded_hn_uniform_bound}
0\le h_n(s)\le C s^{q/2-1}\le C,
\quad
|h_n'(s)|\le C s^{q/2-2}
\quad\text{for all }s\in(0,1],
\end{equation}
which implies that if
\(0<s<t\le1\), then

\[
|h_n(t)-h_n(s)|
\le
\int_s^t |h_n'(\xi)|\,d\xi
\le
C\int_s^t \xi^{q/2-2}\,d\xi
=
\frac{C}{q/2-1}\bigl(t^{q/2-1}-s^{q/2-1}\bigr);
\]
while, if \(s=0\), then $
|h_n(t)-h_n(0)|=h_n(t)\le C t^{q/2-1}.$ 
Therefore \(\{h_n\}\) is uniformly bounded and equicontinuous on \([0,1]\).
The Arzel\`a--Ascoli theorem then yields, up to a subsequence, a function
\(h_\infty\in C([0,1])\) such that
\begin{equation}\label{eq:prop_Ztilde_bounded_hn_conv_corrected}
h_n\to h_\infty
\quad\text{in }C([0,1]).
\end{equation}

\medskip
\noindent Step 2. Local precompactness of \(\{\omega_n\}\) and \(-\Delta\omega=h_\infty(\omega)\) in \(\R^N\).

By \eqref{eq:prop_Ztilde_bounded_blowup_bounds_corrected}
and \eqref{eq:prop_Ztilde_bounded_hn_uniform_bound},
\(\{h_n(\omega_n)\}\) is bounded in
\(L^{\infty}(\R^N)\). Standard elliptic estimates then imply that, after
passing to a subsequence, we may assume that, for
some \(\beta\in(0,1)\), there exists
\(\omega\in C^{1,\beta}_{\mathrm{loc}}(\R^N)\) such that
\begin{equation}\label{eq:prop_Ztilde_bounded_omega_conv_corrected}
\omega_n\to\omega
\quad\text{in }C^{1,\beta}_{\mathrm{loc}}(\R^N).
\end{equation}
In particular, $
\omega(0)=1,$ $
0\le \omega\le1$ in $\R^N$,
and \(\omega\) is radial and radially nonincreasing.

Moreover, by \eqref{eq:prop_Ztilde_bounded_hn_conv_corrected}
and~\eqref{eq:prop_Ztilde_bounded_omega_conv_corrected}, $
h_n(\omega_n)\to h_\infty(\omega)$ in $C_{\mathrm{loc}}(\R^N).
$
Passing to the limit in \eqref{eq:prop_Ztilde_bounded_blowup_eq_corrected},
we conclude that \(-\Delta\omega=h_\infty(\omega)\) in \(\R^N\) in the
distributional sense. Since \eqref{eq:prop_Ztilde_bounded_hn_uniform_bound} and
\eqref{eq:prop_Ztilde_bounded_hn_conv_corrected} imply that
\(h_\infty\) is Hölder continuous on \([0,1]\), standard Schauder estimates yield \(\omega\in C^{2}_{\mathrm{loc}}(\R^N)\).
Since
\(\omega(0)=1\), \(\omega\) is nontrivial. By the strong maximum
principle,
\(0<\omega\le1\) in \(\R^N\).

\medskip
\noindent Step 3. \(\displaystyle \liminf\limits_{s\to0^+}\frac{h_\infty(s)}{s^{\frac p2-1}}>0\).

Let \(p\), \(\sigma_0\), \(C_2\), and \(S_1\) be the constants in \textup{(A6)}, and
set \(s_*=\min\{1,\sigma_0^2/4\}\).
Fix \(s\in(0,s_*]\). Since \(\alpha_n\to\infty\), 
\(\alpha_n s\to\infty\). By Lemma~\ref{lem:prelim_r_large_bounds} \textup{$(i)$, $(iv)$}, we obtain
\[
\frac{r(\alpha_n s)}{r(\alpha_n)}
=
\frac{r(\alpha_n s)}{(\alpha_n s)^{1/2}}
\cdot
\frac{\alpha_n^{1/2}}{r(\alpha_n)}
\,s^{1/2}
\rightarrow s^{1/2},
\]
and
\[
\frac{r'(\alpha_n s)}{r'(\alpha_n)}
=
\bigl(r'(\alpha_n s)(\alpha_n s)^{1/2}\bigr)
\bigl(r'(\alpha_n)\alpha_n^{1/2}\bigr)^{-1}
\,s^{-1/2}
\rightarrow s^{-1/2}.
\]
Hence, for sufficiently large \(n\),
\begin{equation}\label{eq:prop_Ztilde_bounded_ratio_estimates_corrected}
\frac12 s^{1/2}
\le
\frac{r(\alpha_n s)}{r(\alpha_n)}
\le
2 s^{1/2}
\le
\sigma_0,
\quad
\frac12 s^{-1/2}
\le
\frac{r'(\alpha_n s)}{r'(\alpha_n)}.
\end{equation}
Notice that \(r(\alpha_n)\ge S_1\) for sufficiently large \(n\). Therefore,
\textup{(A6)} yields
\begin{equation}\label{eq:prop_Ztilde_bounded_g_ratio_lower_corrected}
\frac{g(r(\alpha_n s))}{g(r(\alpha_n))}
\ge
C_2\left(\frac{r(\alpha_n s)}{r(\alpha_n)}\right)^{p-1}
\ge
C_2 2^{-(p-1)} s^{\frac{p-1}{2}}.
\end{equation}
By \eqref{eq:prop_Ztilde_bounded_ratio_estimates_corrected}
and \eqref{eq:prop_Ztilde_bounded_g_ratio_lower_corrected}, we obtain
\[
h_n(s)
=
\frac{r'(\alpha_n s)}{r'(\alpha_n)}
\cdot
\frac{g(r(\alpha_n s))}{g(r(\alpha_n))}
\ge
c_p s^{\frac p2-1}
\]
for some constant \(c_p>0\) independent of \(s\) and \(n\). Letting
\(n\to\infty\) and using \eqref{eq:prop_Ztilde_bounded_hn_conv_corrected}, we
obtain
\[
h_\infty(s)\ge c_p s^{\frac p2-1}
\quad\text{for any }s\in(0,s_*].
\]
Consequently,
\begin{equation}\label{eq:prop_Ztilde_bounded_hinfty_lower_corrected}
\liminf\limits_{s\to0^+}\frac{h_\infty(s)}{s^{\frac p2-1}}>0.
\end{equation}

\medskip
\noindent Step 4. \(s\,h_\infty(s)\le 2^*\int_0^s h_\infty(\xi)\,d\xi\) for \(0\le s\le1\).

Since \(K(t)=G(r(t))\), Lemma~\ref{lem:prelim_r_large_bounds} \textup{$(i)$, $(iv)$} and
\textup{(A7)} yield
\[
\limsup\limits_{t\to\infty}\frac{t\,k(t)}{K(t)}
=
\limsup\limits_{t\to\infty}
\left(
\frac{t\,r'(t)}{r(t)}
\cdot
\frac{r(t)\,g(r(t))}{G(r(t))}
\right)
\le 2^*.
\]
Thus, for any \(\varepsilon>0\),
\begin{equation}\label{eq:prop_Ztilde_bounded_tk_K_upper_eps}
t\,k(t)\le(2^*+\varepsilon)K(t)
\quad\text{for all sufficiently large }t.
\end{equation}
Fix \(s\in(0,1]\). Since \(\alpha_n s\to\infty\) and
\(K(\alpha_n s)=\alpha_n k(\alpha_n)\int_0^s h_n(\xi)\,d\xi\),
\eqref{eq:prop_Ztilde_bounded_tk_K_upper_eps} yields,
for all sufficiently large \(n\),
\begin{equation}\label{eq:prop_Ztilde_bounded_hn_pohozaev_eps}
s\,h_n(s)\le (2^*+\varepsilon)\int_0^s h_n(\xi)\,d\xi.
\end{equation}
Passing to the limit in \eqref{eq:prop_Ztilde_bounded_hn_pohozaev_eps}, first as
\(n\to\infty\) by \eqref{eq:prop_Ztilde_bounded_hn_conv_corrected} and then as
\(\varepsilon\to0^+\), we obtain
\begin{equation}\label{eq:prop_Ztilde_bounded_hinfty_pohozaev_corrected}
s\,h_\infty(s)\le 2^*\int_0^s h_\infty(\xi)\,d\xi
\quad\text{for any }s\in[0,1].
\end{equation}

\medskip
\noindent Step 5. Nonexistence for the limiting equation.

Extend \(h_\infty\) by \(\widetilde h_\infty=h_\infty\) on \([0,1]\) and
\(\widetilde h_\infty=h_\infty(1)\) on \([1,\infty)\), and set
\(\widetilde H_\infty(s)=\int_0^s\widetilde h_\infty(\tau)\,d\tau\). Then
\(\widetilde h_\infty\in C([0,\infty))\) and \(\widetilde h_\infty\ge0\). For
\(s\ge1\), \(\widetilde H_\infty(s)=\widetilde H_\infty(1)+(s-1)h_\infty(1)\) and, by \eqref{eq:prop_Ztilde_bounded_hinfty_pohozaev_corrected} with $s=1$, 
\[
s\,\widetilde h_\infty(s)
=s\,h_\infty(1)
\le
2^*\widetilde H_\infty(1)+(s-1)h_\infty(1)
\le
2^*\widetilde H_\infty(s).
\]
Together with \eqref{eq:prop_Ztilde_bounded_hinfty_pohozaev_corrected} on
\([0,1]\), this yields
\begin{equation}\label{eq:prop_Ztilde_bounded_global_pohozaev_corrected}
s\,\widetilde h_\infty(s)\le 2^* \widetilde H_\infty(s)
\quad\text{for any }s\ge0.
\end{equation}
Since \(\widetilde h_\infty=h_\infty\) on \([0,1]\),
\eqref{eq:prop_Ztilde_bounded_hinfty_lower_corrected} also yields
\begin{equation}\label{eq:prop_Ztilde_bounded_tilde_hinfty_lower}
\liminf\limits_{s\to0^+}\frac{\widetilde h_\infty(s)}{s^{\frac p2-1}}>0,
\quad
\frac p2\in(2,2^*).
\end{equation}

Since \(0<\omega\le1\) and \(-\Delta\omega=h_\infty(\omega)\), we have
\[
-\Delta\omega=\widetilde h_\infty(\omega)
\quad\text{in }\R^N.
\]
Thus \(\omega\) is a nontrivial nonnegative radial classical solution of
\(-\Delta u=\widetilde h_\infty(u)\) in \(\R^N\). Since
\eqref{eq:prop_Ztilde_bounded_tilde_hinfty_lower} and
\eqref{eq:prop_Ztilde_bounded_global_pohozaev_corrected} verify the hypotheses
of \cite[Theorem~2.2 (ii)]{MR4701352}, $\omega\equiv 0$ in $\mathbb{R}^N$.
This contradiction proves that \(\widetilde Z\) is bounded.
\end{proof}

We next consider the generalized asymptotically critical case. We first establish a uniform estimate for the
rescaled nonlinearities.

\begin{lemma}\label{lem:gen_hn_bounds}
Assume that \textup{(G1)}--\textup{(G3)} hold. Let \(\alpha_n>0\) and
\(\alpha_n\to+\infty\), and define
\(h_n(s)=k(\alpha_n s)/k(\alpha_n)\) for \(0\le s\le1\).
Then there exist \(n_0\in\N\) and \(C>0\) such that
\begin{equation}\label{eq:gen_hn_global}
0\le h_n(s)\le C s^{2^*-1}
\quad\text{for }n\ge n_0\text{ and }s\in[0,1].
\end{equation}
Moreover, for any \(\delta\in(0,1)\),
\begin{equation}\label{eq:gen_hn_C0_conv}
\lim\limits_{n\to\infty}
\sup_{s\in[\delta,1]}\left|h_n(s)-s^{2^*-1}\right|=0.
\end{equation}
\end{lemma}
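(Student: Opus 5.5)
The plan is to compare $k$ with the pure power $t^{2^*-1}$ on two ranges of $t$: a bounded range near the origin, where Lemma~\ref{lem:prelim_k_local_bounds} applies, and a large range, where Lemma~\ref{lem:prelim_gen_dual_decomposition} applies. By Lemma~\ref{lem:prelim_gen_dual_decomposition} $(i)$--$(iii)$ there is $T_1\ge T_*$ with $k_{\mathrm r}=0$ on $[T_1,\infty)$. For $t\ge T_1$ we then have
\[
k(t)=k_c(t)+k_{\mathrm p}(t)=2^{2/(N-2)}t^{2^*-1}\bigl(1+o(1)\bigr),
\]
because $k_{\mathrm p}(t)\le Ct^{q_2/2-1}$ and $q_2/2-1<2^*-1$. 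Enlarging $T_1$ if needed, we get the two-sided bound $c\,t^{2^*-1}\le k(t)\le C t^{2^*-1}$ for $t\ge T_1$. In particular $k(\alpha_n)\ge c\,\alpha_n^{2^*-1}$ once $\alpha_n\ge T_1$, and $h_n$ is well defined.

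\textbf{Global bound \eqref{eq:gen_hn_global}.} Fix $n$ large and $s\in[0,1]$, and split according to the size of $\alpha_n s$. If $\alpha_n s\ge T_1$, then
\[
h_n(s)\le \frac{C(\alpha_n s)^{2^*-1}}{c\,\alpha_n^{2^*-1}}=C s^{2^*-1}.
\]
If $\alpha_n s\le T_1$, Lemma~\ref{lem:prelim_k_local_bounds} with $A=T_1$ gives $k(\alpha_n s)\le C(\alpha_n s)^{m-1}$. Since $m>2^*$ and $\alpha_n s\le T_1$, we have $(\alpha_n s)^{m-1}\le T_1^{m-2^*}(\alpha_n s)^{2^*-1}$. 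Dividing by $k(\alpha_n)\ge c\,\alpha_n^{2^*-1}$ again gives $h_n(s)\le Cs^{2^*-1}$. Nonnegativity follows from (G1) and $r'>0$.

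\textbf{Uniform convergence \eqref{eq:gen_hn_C0_conv}.} Fix $\delta\in(0,1)$ and take $n$ large enough that $\alpha_n\delta\ge T_1$. Then for every $s\in[\delta,1]$ the point $\alpha_n s$ lies in the asymptotic regime. Write $k(t)=2^{2/(N-2)}t^{2^*-1}\bigl(1+\varepsilon(t)\bigr)$ with $\varepsilon(t)\to0$ as $t\to\infty$. By Lemma~\ref{lem:prelim_gen_dual_decomposition} $(ii)$--$(iii)$,
\[
|\varepsilon(t)|\le C\bigl(t^{-1}\ln t+t^{(q_2-2\cdot2^*)/2}\bigr).
\]
Hence
\[
h_n(s)=s^{2^*-1}\,\frac{1+\varepsilon(\alpha_n s)}{1+\varepsilon(\alpha_n)},
\]
so
\[
\sup_{[\delta,1]}|h_n(s)-s^{2^*-1}|\le \sup_{t\ge\alpha_n\delta}\frac{|\varepsilon(t)|+|\varepsilon(\alpha_n)|}{1-|\varepsilon(\alpha_n)|}\to0.
\]

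The only point needing care is the uniformity of the error term $\varepsilon(t)$ over $t\ge \alpha_n\delta$. The positive perturbation $k_{\mathrm p}$ has lower order precisely because $q_2<2\cdot 2^*$, which translates to $q_2/2-1<2^*-1$ in the dual variable. The cutoff part $k_{\mathrm r}$ vanishes identically beyond $T_*$, so beyond that point it contributes nothing. With these two facts the estimates above are routine.
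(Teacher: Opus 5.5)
Your proof is correct and follows essentially the same route as the paper. Both arguments get $k(t)\sim 2^{2/(N-2)}t^{2^*-1}$ from Lemma~\ref{lem:prelim_gen_dual_decomposition}, combine it with Lemma~\ref{lem:prelim_k_local_bounds} near the origin for the global bound, and control $k(\alpha_n s)/k(\alpha_n)$ uniformly over $\alpha_n s\ge \alpha_n\delta$. The only differences are cosmetic: you split into the cases $\alpha_n s\le T_1$ and $\alpha_n s\ge T_1$ instead of first proving $k(t)\le Ct^{2^*-1}$ on $[0,\infty)$, and you give an explicit rate for the relative error $\varepsilon(t)$.
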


\begin{proof}
By Lemma~\ref{lem:prelim_gen_dual_decomposition} \textup{$(i)$--$(iii)$} and
\(q_2/2<2^*\), as \(t\to+\infty\),
\[
k(t)=2^{2/(N-2)}t^{2^*-1}+O(t^{2^*-2}\ln t)+O(t^{q_2/2-1}).
\]
Hence
\begin{equation}\label{eq:gen_k_critical_ratio}
\lim\limits_{t\to+\infty}\frac{k(t)}{2^{2/(N-2)}t^{2^*-1}}=1.
\end{equation}
\textup{(G1)}, Lemma~\ref{lem:prelim_k_local_bounds}, and
\eqref{eq:gen_k_critical_ratio} imply
\begin{equation}\label{eq:gen_k_global_upper}
0\le k(t)\le Ct^{2^*-1}\quad \text{for } t\ge0.
\end{equation}
Moreover, \eqref{eq:gen_k_critical_ratio} yields \(n_0\in\N\) and \(c>0\)
such that
\begin{equation}\label{eq:gen_k_alpha_lower}
k(\alpha_n)\ge c\alpha_n^{2^*-1}
\quad\text{for }n\ge n_0.
\end{equation}
By \eqref{eq:gen_k_global_upper} and \eqref{eq:gen_k_alpha_lower}, for
\(n\ge n_0\) and \(s\in[0,1]\),
\[
0\le h_n(s)
=\frac{k(\alpha_n s)}{k(\alpha_n)}
\le C\frac{(\alpha_n s)^{2^*-1}}{\alpha_n^{2^*-1}}
=Cs^{2^*-1}.
\]
Thus \eqref{eq:gen_hn_global} holds.

Let \(\delta\in(0,1)\). By \eqref{eq:gen_k_critical_ratio},
\[
\lim\limits_{n\to\infty}
\sup_{\tau\ge\delta\alpha_n}
\left|\frac{k(\tau)}{2^{2/(N-2)}\tau^{2^*-1}}-1\right|=0.
\]
In particular,
\begin{equation}\label{eq:gen_k_ratio_alpha_delta}
\lim\limits_{n\to\infty}
\sup_{s\in[\delta,1]}
\left|\frac{k(\alpha_n s)}
{2^{2/(N-2)}(\alpha_n s)^{2^*-1}}-1\right|=0
\end{equation}
and
\begin{equation}\label{eq:gen_k_ratio_alpha}
\lim\limits_{n\to\infty}\frac{k(\alpha_n)}
{2^{2/(N-2)}\alpha_n^{2^*-1}}=1.
\end{equation}
Hence \(k(\alpha_n)/(2^{2/(N-2)}\alpha_n^{2^*-1})\ge1/2\) for 
sufficiently large \(n\), and therefore
\[
\begin{aligned}
\sup_{s\in[\delta,1]}\left|h_n(s)-s^{2^*-1}\right|
&\le
\sup_{s\in[\delta,1]}
s^{2^*-1}
\left|
\frac{k(\alpha_n s)}{2^{2/(N-2)}(\alpha_n s)^{2^*-1}}
\left(\frac{k(\alpha_n)}{2^{2/(N-2)}\alpha_n^{2^*-1}}\right)^{-1}
-1
\right|  \\
&\le
2\sup_{s\in[\delta,1]}
\left|\frac{k(\alpha_n s)}
{2^{2/(N-2)}(\alpha_n s)^{2^*-1}}-1\right|
+2\left|\frac{k(\alpha_n)}
{2^{2/(N-2)}\alpha_n^{2^*-1}}-1\right|.
\end{aligned}
\]
\eqref{eq:gen_k_ratio_alpha_delta} and \eqref{eq:gen_k_ratio_alpha} imply \eqref{eq:gen_hn_C0_conv}.
\end{proof}

\begin{proposition}\label{prop:gen_Z_bounded}
Assume that \textup{(G1)}--\textup{(G3)} hold. Then the shooting parameter set \(Z\) is bounded.
\end{proposition}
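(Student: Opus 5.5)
The plan is a blow-up argument combined with the energy bound $E<\mathcal E_*$ available in the asymptotically critical case. Suppose, to the contrary, that there are $\alpha_n\in Z$ with $\alpha_n\to+\infty$. Set $v_n=v_{\alpha_n}$, and rescale as in the proof of Proposition~\ref{prop:Ztilde_bounded_general}:
\[
d_n=\sqrt{\alpha_n/k(\alpha_n)},\quad \omega_n(x)=\alpha_n^{-1}v_n(d_nx),\quad h_n(s)=k(\alpha_ns)/k(\alpha_n).
\]
Then $-\Delta\omega_n=h_n(\omega_n)$, $\omega_n(0)=1$, and by Remark~\ref{rem:intro_radial_monotonicity} we have $0<\omega_n\le1$.

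First I would pass to the limit. By Lemma~\ref{lem:gen_hn_bounds}, $0\le h_n(s)\le Cs^{2^*-1}$ uniformly in $n$, and $h_n\to s^{2^*-1}$ uniformly on every $[\delta,1]$. Together these give $h_n\to s^{2^*-1}$ uniformly on all of $[0,1]$: on $[0,\delta]$ both functions are bounded by $C\delta^{2^*-1}$. Elliptic estimates then give, along a subsequence, $\omega_n\to\omega$ in $C^{1,\beta}_{\rm loc}(\R^N)$. The limit $\omega$ is radial, nonincreasing, satisfies $\omega(0)=1$, and solves $-\Delta\omega=\omega^{2^*-1}$ classically. Applying Fatou's lemma to $|\nabla\omega_n|^2$ shows $\omega\in D^{1,2}(\R^N)$. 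By the classification of positive $D^{1,2}$ solutions of the critical Lane--Emden equation, $\omega$ is an Aubin--Talenti bubble, so $\int_{\R^N}|\nabla\omega|^2\,dx=S^{N/2}$.

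Next I would compare energies. By the Pohozaev identity (as in Proposition~\ref{prop:Z_bounded_general}), $\int_{\R^N}|\nabla v_n|^2\,dx=NE$. Changing variables gives
\[
\int_{\R^N}|\nabla v_n|^2\,dx=\alpha_n^2d_n^{N-2}\int_{\R^N}|\nabla\omega_n|^2\,dx .
\]
By \eqref{eq:gen_k_ratio_alpha}, $d_n^2\sim 2^{-2/(N-2)}\alpha_n^{2-2^*}$. Since $(2-2^*)(N-2)/2=-2$, this yields $\alpha_n^2d_n^{N-2}\to 2^{-1}$. Fatou's lemma then gives
\[
NE\ge\tfrac12\liminf_{n\to\infty}\int_{\R^N}|\nabla\omega_n|^2\,dx\ge \tfrac12 S^{N/2}=N\mathcal E_*,
\]
that is, $E\ge\mathcal E_*$. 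As a consistency check, this matches the energy of the bubble for $-\Delta w=2^{2/(N-2)}w^{2^*-1}$.

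The contradiction comes from the strict inequality $E<\mathcal E_*$. I would take this from Appendix~\ref{app:gen-existence}: the least energy level is at most the mountain pass level. That level is shown to be strictly below $\mathcal E_*$ by testing with truncated bubbles $U_\varepsilon$ and using Lemma~\ref{lem:prelim_gen_dual_decomposition}. The lower bound $K_{\rm p}(t)\ge ct^{q_1/2}-C$ with $q_1/2>2^*-1$ produces a gain of order $\varepsilon^{N-(N-2)q_1/4}$. This gain dominates the negative logarithmic correction $-t^{2^*-1}\ln t$ in $K_c$, the $K_{\rm r}$ term, and the $O(\varepsilon^{N-2})$ gradient error. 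I expect this strict estimate, in Brezis--Nirenberg style, to be the main obstacle; in particular, checking that the exponent condition $q_1>2\cdot2^*-2$ beats the logarithmic term in $K_c$. I would cite it from the appendix (Proposition~\ref{prop:sec3_groundstate_identification}) rather than reprove it here. Once it is in hand, $E<\mathcal E_*\le E$ is a contradiction, so $Z$ is bounded.
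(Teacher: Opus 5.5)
Your proposal is correct and follows essentially the same route as the paper. Both use the rescaling $\omega_n(x)=\alpha_n^{-1}v_n(d_nx)$, pass to the limit bubble for $-\Delta\omega=\omega^{2^*-1}$, compute $\int_{\R^N}|\nabla\omega_n|^2\,dx=NE\,\bigl(k(\alpha_n)/\alpha_n^{2^*-1}\bigr)^{\frac{N-2}{2}}\to 2NE$ to obtain $E\ge\mathcal E_*$, and conclude by contradiction with $E<\mathcal E_*$ from Proposition~\ref{prop:J_below_cstar}. The differences are cosmetic: you use Fatou instead of weak lower semicontinuity, you invoke a classification theorem where the paper uses ODE uniqueness with $\omega(0)=1$, and you prove uniform convergence of $h_n$ on all of $[0,1]$ instead of the paper's two-region estimate.
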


\begin{proof}
Proposition~\ref{prop:J_below_cstar} yields
\begin{equation}\label{eq:gen_Z_bounded_level_gap}
E<\mathcal E_*.
\end{equation}
Suppose, by contradiction, that \(Z\) is unbounded and choose \(\alpha_n\in Z\) with
\(\alpha_n\to+\infty\). Set \(v_n=v_{\alpha_n}\) and
\[
d_n=\sqrt{\frac{\alpha_n}{k(\alpha_n)}},
\quad
\omega_n(x)=\alpha_n^{-1}v_n(d_nx).
\]
Since \(\alpha_n\in Z\), the profile \(v_n\) is positive and strictly
decreasing. Hence
\[
\omega_n(0)=1,
\quad
0<\omega_n\le1
\quad\text{in }\R^N.
\]
For \(h_n\) as in Lemma~\ref{lem:gen_hn_bounds}, \(\omega_n\) satisfies
\begin{equation}\label{eq:gen_blowup_eq}
-\Delta\omega_n=h_n(\omega_n)
\quad\text{in }\R^N.
\end{equation}
Since \(0<\omega_n\le1\), \eqref{eq:gen_hn_global} yields, for sufficiently large \(n\),
\begin{equation}\label{eq:gen_hn_omega_global}
0\le h_n(\omega_n)\le C\omega_n^{2^*-1}
\quad\text{in }\R^N.
\end{equation}

By \eqref{eq:gen_hn_omega_global} and standard elliptic estimates, after
passing to a subsequence,
\begin{equation}\label{eq:gen_Z_bounded_C1loc}
\omega_n\to\omega\quad\text{in }C^{1,\beta}_{\mathrm{loc}}(\R^N)
\quad\text{for any }0<\beta<1,
\quad
\omega(0)=1,\quad 0\le\omega\le1.
\end{equation}
Let \(K\) be a compact subset of \(\R^N\). We claim that
\begin{equation}\label{eq:gen_nonlin_limit}
\|h_n(\omega_n)-\omega^{2^*-1}\|_{L^\infty(K)}\to0.
\end{equation}
Indeed, fix \(\delta\in(0,1)\). By \eqref{eq:gen_Z_bounded_C1loc}, for sufficiently
large \(n\), one has \(\omega_n\in[\delta/2,1]\) on
\(K\cap\{\omega\ge\delta\}\) and \(\omega_n\le2\delta\) on
\(K\cap\{\omega<\delta\}\). Hence \eqref{eq:gen_hn_C0_conv} and
\eqref{eq:gen_Z_bounded_C1loc} imply
\(h_n(\omega_n)-\omega^{2^*-1}\to0\) in
\(L^\infty(K\cap\{\omega\ge\delta\})\). On \(K\cap\{\omega<\delta\}\), the inequalities
\(0\le\omega<\delta\), \(\omega_n\le2\delta\), and \eqref{eq:gen_hn_global}
yield
\(\|h_n(\omega_n)-\omega^{2^*-1}\|_{L^\infty(K\cap\{\omega<\delta\})}
\le C\delta^{2^*-1}\) for sufficiently large \(n\). Combining the estimates on the two regions,
\[
\limsup\limits_{n\to\infty}
\|h_n(\omega_n)-\omega^{2^*-1}\|_{L^\infty(K)}
\le C\delta^{2^*-1},
\]
where \(C\) is independent of \(\delta\). Since \(\delta\in(0,1)\) is arbitrary,
\eqref{eq:gen_nonlin_limit} follows.

Letting \(n\to\infty\) in \eqref{eq:gen_blowup_eq} and using
\eqref{eq:gen_nonlin_limit}, we obtain $
-\Delta\omega=\omega^{2^*-1}$ in the sense of distributions on
\(\R^N\). Elliptic regularity and the strong maximum principle yield
\(\omega\in C^2(\R^N)\) and \(\omega>0\). Moreover, since each
\(\omega_n\) is radial, so is \(\omega\). Writing \(\omega=\omega(\rho)\), we
have
\[
-\omega''-\frac{N-1}{\rho}\omega'=\omega^{2^*-1},
\quad
\omega(0)=1,
\quad
\omega'(0)=0.
\]
The uniqueness of the regular radial solution to this initial value problem
therefore yields
\[
\omega(x)=
\left(1+\frac{|x|^2}{N(N-2)}\right)^{-\frac{N-2}{2}}.
\]

By the definition of \(Z\), Proposition~\ref{prop:sec3_groundstate_identification},
and the Pohozaev identity,
\[
J(v_n)=E,
\quad
\int_{\R^N}|\nabla v_n|^2\,dx=NE.
\]
Hence, the definitions of \(d_n\) and \(\omega_n\), together with
Lemma~\ref{lem:prelim_gen_dual_decomposition}
\textup{$(ii)$, $(iii)$}, yield
\[
\int_{\R^N}|\nabla\omega_n|^2\,dx
=NE\Bigl(\frac{k(\alpha_n)}{\alpha_n^{2^*-1}}\Bigr)^{\frac{N-2}{2}}
\rightarrow 2NE.
\]
Thus \(\{\omega_n\}\) is bounded in \(D^{1,2}(\R^N)\). Together with
\eqref{eq:gen_Z_bounded_C1loc}, this yields
\(\omega_n\rightharpoonup\omega\) in \(D^{1,2}(\R^N)\), up to a subsequence. By weak lower
semicontinuity and \cite[Lemma~1.46]{Willem1996},
\(S^{N/2}\le2NE\). Thus
\(E\ge \mathcal E_*\) by \eqref{eq:Estar_def}, contradicting
\eqref{eq:gen_Z_bounded_level_gap}. Therefore \(Z\) is bounded.
\end{proof}

\section{Proofs of Theorems 
\ref{thm:1.3} \textup{$(i)$}
and \ref{thm:1.5}}
\label{sec:mass-geometry}

In this section, we prove Theorems~\ref{thm:1.3} \textup{$(i)$} and
\ref{thm:1.5} in parallel. Hence, for any \(\alpha\in Z\), the definition of \(Z\) and the Pohozaev identity yield
\begin{equation}\label{eq:sec5_ground_state_energy}
\|\nabla v_\alpha\|_2^2=NE.
\end{equation} 

\subsection{Compactness of the shooting parameter set}

\begin{lemma}\label{lem:sec5_alpha_bounds}
Assume that either the hypotheses of Theorem~\ref{thm:1.3} \textup{$(i)$} or
those of Theorem~\ref{thm:1.5} hold. Then there exist constants $
0<a_*<A^*<\infty$
such that \(a_*\le\alpha\le A^*\) for any \(\alpha\in Z\).
\end{lemma}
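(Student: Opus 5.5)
The upper bound \(A^*\) is already available from Proposition~\ref{prop:Z_bounded_general} under the hypotheses of Theorem~\ref{thm:1.3} \textup{$(i)$} and from Proposition~\ref{prop:gen_Z_bounded} under those of Theorem~\ref{thm:1.5}. The remaining task is a positive lower bound \(a_*\). I would argue by contradiction. Suppose there exist \(\alpha_n\in Z\) with \(\alpha_n\to0^+\). Write \(v_n=v_{\alpha_n}\). By Remark~\ref{rem:intro_radial_monotonicity}, \(0<v_n\le\alpha_n\) on \(\R^N\). By \eqref{eq:sec5_ground_state_energy}, \(\|\nabla v_n\|_2^2=NE>0\) for every \(n\), where \(E>0\) because a nontrivial solution has positive energy.

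The contradiction will come from the small-amplitude behaviour of \(k\). Fix \(A=\sup_n\alpha_n\), which is finite. Lemma~\ref{lem:prelim_k_local_bounds} gives \(0\le k(t)\le C_At^{m-1}\) on \((0,A]\) with \(m>2^*\). Testing the equation \(-\Delta v_n=k(v_n)\) with \(v_n\) is justified, since \(v_n\in D^{1,2}\) and decays like \(\rho^{2-N}\) by Lemma~\ref{lem:solution_sets_nonempty_decay}. This yields
\[
NE=\|\nabla v_n\|_2^2=\int_{\R^N}k(v_n)v_n\,dx\le C_A\int_{\R^N}v_n^{m}\,dx\le C_A\alpha_n^{m-2^*}\int_{\R^N}v_n^{2^*}\,dx .
\]
The Sobolev inequality bounds the last integral: \(\int v_n^{2^*}\le S^{-2^*/2}(NE)^{2^*/2}\). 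Hence \(NE\le C\alpha_n^{m-2^*}\to0\), which contradicts \(E>0\). This gives \(a_*>0\).

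The only points needing care are that \(E>0\) and that the identity \(\|\nabla v\|_2^2=\int k(v)v\) holds. For the first, the Pohozaev identity gives \(J(v)=\frac1N\|\nabla v\|_2^2>0\) for any positive solution. For the second, I would use the \(D^{1,2}\) weak formulation together with boundedness of \(v_n\) and the bound \(k(t)t\le Ct^{m}\), which makes \(\int k(v_n)v_n\) finite. These details, rather than any structural difficulty, are where the argument could stall. The asymptotically critical case (G1)--(G3) is handled identically, since Lemma~\ref{lem:prelim_k_local_bounds} covers it as well.
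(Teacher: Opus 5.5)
Your proposal is correct and follows the same route as the paper. The upper bound is quoted from Propositions~\ref{prop:Z_bounded_general} and~\ref{prop:gen_Z_bounded}. The lower bound comes from \(NE=\|\nabla v\|_2^2=\int_{\R^N}k(v)v\,dx\le C\alpha^{m-2^*}\int_{\R^N}v^{2^*}\,dx\) combined with the Sobolev inequality.

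The only difference is cosmetic. You argue by contradiction along a sequence \(\alpha_n\to0^+\). The paper instead derives \(NE\le C\alpha^{m-2^*}(NE)^{2^*/2}\) directly for each \(\alpha\in Z\), using \(A^*\) as the constant in Lemma~\ref{lem:prelim_k_local_bounds}, which gives an explicit \(a_*\).
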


\begin{proof}
The upper bound follows from Proposition~\ref{prop:Z_bounded_general} under the assumptions of Theorem~\ref{thm:1.3} \textup{$(i)$} and from
Proposition~\ref{prop:gen_Z_bounded} in under the assumptions of Theorem~\ref{thm:1.5}.
Thus there exists \(A^*>0\) such that \(\sup Z\le A^*\).

It remains to prove the positive lower bound. Fix \(\alpha\in Z\) and let
\(v=v_\alpha\). By Lemma~\ref{lem:prelim_k_local_bounds},
\[
0\le k(s)\le C s^{m-1}
\quad\text{for all }s\in[0,A^*].
\]
Since \(0\le v\le\alpha\) and \(m>2^*\),
\[
\|\nabla v\|_2^2
=\int_{\mathbb R^N}k(v)v\,dx
\le C\int_{\mathbb R^N}v^m\,dx
\le C\alpha^{m-2^*}\int_{\mathbb R^N}v^{2^*}\,dx.
\]
By the Sobolev inequality, 
\begin{equation}\label{sobolev_identity}
\|\nabla v\|_2^2
\le
C\,\alpha^{m-2^*}\|v\|_{L^{2^*}}^{2^*}
\le
C\,\alpha^{m-2^*}\|\nabla v\|_2^{2^*}.  
\end{equation}
By \eqref{eq:sec5_ground_state_energy},
\(\|\nabla v\|_2^2=NE>0\). Substituting this into
\eqref{sobolev_identity} yields
\[
NE\le C\,\alpha^{m-2^*}(NE)^{2^*/2}, 
\]
which implies that there exists a constant \(a_*>0\), independent of \(\alpha\in Z\), such that \(\alpha\ge a_*\).
\end{proof}

We next show the continuity of the shooting map with respect to the initial value.

\begin{lemma}\label{lem:sec5_continuity}
Assume that either \textup{(A1)}--\textup{(A2)} or
\textup{(G1)}--\textup{(G2)} holds. Let \(I\subset(0,\infty)\) be a closed interval and \(R>0\). Then the map $
\alpha\mapsto v_\alpha$ 
is continuous from \(I\) into \(C^1([0,R])\).
\end{lemma}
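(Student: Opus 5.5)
We prove the lemma by the standard continuous-dependence argument for the singular initial value problem \eqref{eq:intro_radial_ode}, rewritten as an integral equation. Since \(g\) is extended by zero on \((-\infty,0]\) and \(k(t)=r'(t)g(r(t))\), Lemma~\ref{lem:prelim_k_local_bounds} together with \(k(t)=0\) for \(t\le0\) gives \(k\in C^1(\mathbb R)\). Hence \(k\) is locally Lipschitz on \(\mathbb R\). A classical solution \(v_\alpha\) of \eqref{eq:intro_radial_ode} is equivalently a continuous solution of
\[
v(\rho)=\alpha-\int_0^\rho s^{1-N}\int_0^s \tau^{N-1}k(v(\tau))\,d\tau\,ds,
\qquad
v'(\rho)=-\rho^{1-N}\int_0^\rho \tau^{N-1}k(v(\tau))\,d\tau .
\]
The kernel satisfies \(\int_0^\rho s^{1-N}\int_0^s\tau^{N-1}\,d\tau\,ds=\rho^2/(2N)\), so the singularity at \(\rho=0\) is harmless.

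\textbf{A priori bound.} The first step is to show that, for \(\alpha\in I\) and \(\rho\in[0,R]\), the solution \(v_\alpha\) exists on \([0,R]\) and stays in a fixed bounded set. By the monotonicity computation in Remark~\ref{rem:intro_radial_monotonicity}, as long as \(v_\alpha\ge0\) we have \(v_\alpha'\le0\), so \(v_\alpha\le\alpha\le\max I\). Once \(v_\alpha\) becomes negative, \(k(v_\alpha)=0\), and \(\rho^{N-1}v_\alpha'\) is frozen at a finite value. Therefore \(v_\alpha\) stays bounded on \([0,R]\) and no blow-up occurs. Concretely, for all \(\alpha\in I\) and \(\rho\in[0,R]\),
\[
|v_\alpha(\rho)|\le \max I+C\rho^2,\qquad |v_\alpha'(\rho)|\le C\rho,
\]
with \(C=\max_{[0,\max I]}k/N\). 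In particular, all values lie in a compact interval \([-M,M]\) on which \(k\) has a Lipschitz constant \(L\).

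\textbf{Gronwall step.} For \(\alpha,\beta\in I\), set \(w=v_\alpha-v_\beta\). From the integral equation for \(v'\),
\[
|w'(\rho)|\le \rho^{1-N}\int_0^\rho \tau^{N-1}L|w(\tau)|\,d\tau\le \frac{L\rho}{N}\sup_{[0,\rho]}|w| .
\]
Moreover \(|w(\rho)|\le|\alpha-\beta|+\int_0^\rho|w'|\). Setting \(\phi(\rho)=\sup_{[0,\rho]}|w|\), this gives
\[
\phi(\rho)\le|\alpha-\beta|+\frac{L}{N}\int_0^\rho s\,\phi(s)\,ds .
\]
Gronwall's inequality then yields \(\phi(R)\le|\alpha-\beta|e^{LR^2/(2N)}\). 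Inserting this back into the bound for \(w'\) gives \(\|w'\|_{L^\infty(0,R)}\le (LR/N)e^{LR^2/(2N)}|\alpha-\beta|\). Together these establish Lipschitz, hence continuous, dependence of \(\alpha\mapsto v_\alpha\) in \(C^1([0,R])\).

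\textbf{Main obstacle.} The only delicate point is the a priori bound: we must ensure that the solutions exist on all of \([0,R]\) uniformly in \(\alpha\in I\) and that their range lies where a single Lipschitz constant of \(k\) applies. This is handled by the zero extension of \(g\), which makes \(k\) globally bounded on \((-\infty,\max I]\), and by the sign observation that \(v_\alpha\) never exceeds \(\alpha\). Existence and uniqueness of \(v_\alpha\) itself follows from the same contraction estimate applied on a short interval and then continued.
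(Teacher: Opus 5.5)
Your proposal is correct and is essentially the paper's approach: the paper simply cites standard continuous dependence on initial data for the regularized first-order system, and your integral formulation, a priori bound (from $k\ge 0$, the zero extension of $g$, and $k\in C^1(\mathbb R)$), and Gronwall estimate are that standard argument written out. The use of $\max I$ assumes $I$ is bounded; since continuity is local, if $I$ is unbounded you can run the same argument on a compact subinterval around each point.
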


\begin{proof}
This is the standard continuous dependence on initial data for the regularized
first-order system associated with \eqref{eq:intro_radial_ode}.
\end{proof}

\begin{proposition}\label{prop:sec5_Z_closed}
Assume that either the hypotheses of Theorem~\ref{thm:1.3} \textup{$(i)$} or those
of Theorem~\ref{thm:1.5} hold. Then \(Z\) is a compact subset of
\((0,\infty)\).
\end{proposition}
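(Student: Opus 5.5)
By Lemma~\ref{lem:sec5_alpha_bounds}, \(Z\subset[a_*,A^*]\). Compactness therefore reduces to showing that \(Z\) is closed. Take \(\alpha_n\in Z\) with \(\alpha_n\to\alpha\in[a_*,A^*]\), and set \(v_n=v_{\alpha_n}\). The goal is to prove that \(v_\alpha\) is a positive radial solution in \(D^{1,2}(\mathbb R^N)\) and that \(r(v_\alpha)\) is a least energy positive solution, that is, \(J(v_\alpha)=E\).

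\emph{Step 1: local convergence and the limit equation.} By Lemma~\ref{lem:sec5_continuity}, \(v_n\to v_\alpha\) in \(C^1([0,R])\) for every \(R>0\). Each \(v_n\) is positive and strictly decreasing (Remark~\ref{rem:intro_radial_monotonicity}), so \(v_\alpha\ge0\) and \(v_\alpha\) is nonincreasing. From \eqref{eq:sec5_ground_state_energy} we have \(\|\nabla v_n\|_2^2=NE\). Fatou's lemma on balls, followed by letting \(R\to\infty\), gives \(\|\nabla v_\alpha\|_2^2\le NE\). In particular, \(v_\alpha\) cannot tend to a positive constant at infinity. Since \(v_\alpha\) is a nonincreasing limit, \(\lim_{\rho\to\infty}v_\alpha(\rho)=0\), and so \(v_\alpha\in D^{1,2}(\mathbb R^N)\) is a radial solution of \eqref{eq:intro_dual}.

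Positivity of \(v_\alpha\) follows from the strong maximum principle. Alternatively, \(v_\alpha(0)=\alpha>0\), and if \(v_\alpha(\rho_0)=0\) at some finite \(\rho_0\), then \(v_\alpha'(\rho_0)=0\) because \(v_\alpha\ge0\); ODE uniqueness (\(k\in C^1\), \(k(0)=0\)) would then force \(v_\alpha\equiv0\), a contradiction.

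\emph{Step 2: identifying the energy (main obstacle).} It remains to show \(\|\nabla v_\alpha\|_2^2=NE\), i.e., that no energy is lost at infinity. The Pohozaev identity then gives \(J(v_\alpha)=E\), so \(v_\alpha\) is least energy and \(\alpha\in Z\). On the other side, \(v_\alpha\) is a positive solution, hence \(J(v_\alpha)\ge E\) by the definition of \(E\). Pohozaev yields \(J(v_\alpha)=\frac1N\|\nabla v_\alpha\|_2^2\), and combining this with Step 1 gives \(\|\nabla v_\alpha\|_2^2\ge NE\). This settles the equality without any tail estimate.

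The only delicate point is to confirm that the Pohozaev identity applies to \(v_\alpha\). That requires the decay \(v_\alpha=O(\rho^{2-N})\), \(v'_\alpha=O(\rho^{1-N})\), which follows from \cite[Lemma~4 and Theorem~5]{MR1617704} since \(0\le K(s)\le Cs^{2^*}\), exactly as in Lemma~\ref{lem:solution_sets_nonempty_decay}. With this decay, \(v_\alpha\in H^1\) because \(N\ge5\), and \(r(v_\alpha)\in X\) is a positive radial solution of \eqref{eq:1.1}. Its energy is \(E\), which is the minimal one, so \(r(v_\alpha)\in\mathcal S\).

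If the Flucher--Müller estimate is not directly available for non-minimizing solutions, I would instead obtain the local Pohozaev identity on \(B_R\) for \(v_\alpha\). The boundary terms are of the form \(R^N(v'^2+K(v))\). Using \(\|\nabla v_\alpha\|_2<\infty\), \(K(v)\le Cv^{m}\) and the radial bound \(v(\rho)\le C\rho^{(2-N)/2}\|\nabla v\|_2\), these boundary terms vanish along a sequence \(R_j\to\infty\).

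Hence \(Z\) is closed and bounded in \((0,\infty)\), and therefore compact.
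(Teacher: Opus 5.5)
Your argument is essentially the paper's. You reduce to closedness via Lemma~\ref{lem:sec5_alpha_bounds}, pass to the limit with Lemma~\ref{lem:sec5_continuity}, and get $\|\nabla v_\alpha\|_2^2\le NE$ by lower semicontinuity. You then combine the Pohozaev identity with the minimality of $E$ to force $J(v_\alpha)=E$, and invoke Flucher--M\"uller for decay and $H^1$ membership.

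One step is justified incorrectly as written. The bound $\|\nabla v_\alpha\|_2^2\le NE$ does \emph{not} by itself rule out $v_\alpha\to\ell>0$; think of $1+\varphi$ with $\varphi\in D^{1,2}(\R^N)$. So ``in particular'' is a non sequitur. This step is load-bearing, because the comparison $J(v_\alpha)\ge E$ needs $v_\alpha\in D^{1,2}(\R^N)$.

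The fix is one line. Apply Fatou also to $\int_{\R^N} v_n^{2^*}\,dx\le S^{-2^*/2}(NE)^{2^*/2}$. This gives $v_\alpha\in L^{2^*}(\R^N)$, and by monotonicity $v_\alpha\to0$ and $v_\alpha\in D^{1,2}(\R^N)$. Alternatives are the uniform radial bound $v_n(\rho)\le C\rho^{-(N-2)/2}$, or the ODE argument in the proof of Proposition~\ref{prop:Z_tilde_closed}. The paper avoids the issue by taking a weak $D^{1,2}$ limit of $v_n$ and identifying it with $v_{\alpha_*}$ through the local $C^1$ convergence.

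Your fallback local Pohozaev argument is a legitimate way to justify the identity for a radial $D^{1,2}$ solution, which the paper uses tacitly. In that route, the $H^1$ conclusion still comes from \cite[Lemma~4 and Theorem~5]{MR1617704}. You apply it once $J(v_\alpha)=E$ is known, exactly as the paper does.
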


\begin{proof}
By Lemma~\ref{lem:sec5_alpha_bounds}, it suffices to prove that \(Z\) is closed
in \((0,\infty)\). Let \(\{\alpha_n\}\subset Z\) be such that
\(\alpha_n\to\alpha_*\), and set \(v_n=v_{\alpha_n}\). By
\eqref{eq:sec5_ground_state_energy},
\(\|\nabla v_n\|_2^2=NE\).
Thus \(\{v_n\}\) is bounded in \(D^{1,2}_{\mathrm{rad}}(\mathbb R^N)\), and,
after passing to a subsequence, \(v_n\rightharpoonup \widetilde v\) in
\(D^{1,2}(\mathbb R^N)\). On the other hand, by Lemma~\ref{lem:sec5_continuity}, \(v_n\to v_{\alpha_*}\) in \(C^1([0,R])\) for any \(R>0\). Hence
\(\widetilde v=v_{\alpha_*}\) on any compact interval, and therefore $
v_n\rightharpoonup v_{\alpha_*}$ in $D^{1,2}(\mathbb R^N).$ In particular, \(v_{\alpha_*}\in D^{1,2}_{\mathrm{rad}}(\mathbb R^N)\).

Since each \(v_n\) is positive and radially decreasing, \(v_{\alpha_*}\) is
nonnegative and radially decreasing. Moreover, for any
\(\varphi\in C_c^\infty(\mathbb R^N)\), $$
\int_{\mathbb R^N}\nabla v_n\cdot\nabla\varphi\,dx
=\int_{\mathbb R^N}k(v_n)\varphi\,dx.$$ 
Passing to the limit, using weak convergence in \(D^{1,2}\) and local uniform
convergence of \(v_n\), we obtain
\[
\int_{\mathbb R^N}\nabla v_{\alpha_*}\cdot\nabla\varphi\,dx
=
\int_{\mathbb R^N}k(v_{\alpha_*})\varphi\,dx.
\]
Thus \(v_{\alpha_*}\) is a radial weak solution of \eqref{eq:intro_dual}.
By elliptic regularity, \(v_{\alpha_*}\in C^2(\mathbb R^N)\), and the strong
maximum principle yields \(v_{\alpha_*}>0\) in \(\mathbb R^N\).
By weak lower semicontinuity and \eqref{eq:sec5_ground_state_energy}, $$
\|\nabla v_{\alpha_*}\|_2^2
\le
\liminf\limits_{n\rightarrow\infty}\|\nabla v_n\|_2^2
=
NE.$$ The Pohozaev
identity yields
\(J(v_{\alpha_*})=N^{-1}\|\nabla v_{\alpha_*}\|_2^2\le E\). Since \(E\) is the least energy level of \(J\) among all positive solutions of
\eqref{eq:intro_dual}, we also have \(J(v_{\alpha_*})\ge E\). Consequently, $
J(v_{\alpha_*})=E,$ and hence 
\(v_{\alpha_*}\) is a least energy positive solution of
\eqref{eq:intro_dual}. Arguing as in the proof of Lemma \ref{lem:solution_sets_nonempty_decay}, it follows from \cite[Lemma~4 and Theorem~5]{MR1617704} that $v_{\alpha_*}\in H^1(\mathbb{R}^N)$. Note that $J(v_{\alpha_*})=I(r(v_{\alpha_*}))$. Hence \(\alpha_*\in Z\), and \(Z\) is closed. This completes the proof.
\end{proof}

\subsection{Emden--Fowler transformation and uniform decay estimates}

Fix \(\alpha\in Z\). We introduce the Emden--Fowler variables
\[
w_\alpha(t)=e^{\la t}v_\alpha(e^t),
\quad
\la=\frac{N-2}{2},
\quad
\mathbf Y_\alpha(t)=\begin{pmatrix} w_\alpha(t)\\ w_\alpha'(t)\end{pmatrix}.
\]
A direct computation shows that \(w=w_\alpha\) solves
\begin{equation}
w''(t)-\la^2 w(t)+e^{\frac{N+2}{2}t}k\bigl(e^{-\la t}w(t)\bigr)=0, \quad \text{in }\mathbb{R},
\end{equation}
and hence \(\mathbf Y_\alpha\) satisfies the first-order system
\begin{equation}
\mathbf Y'(t)=\Lambda \mathbf Y(t)+\mathbf g(t,\mathbf Y(t)),
\end{equation}
where
\begin{equation}\label{first_order_system_nonlinear}
\Lambda=
\begin{pmatrix}
0&1\\
\la^2&0
\end{pmatrix},
\quad
\mathbf{g}(t,\mathbf Y)=
\begin{pmatrix}
0\\
-\,e^{\frac{N+2}{2}t}k(e^{-\la t}w)
\end{pmatrix}
\quad
\text{and}\quad
\mathbf{Y}=
\begin{pmatrix}
w\\ \eta
\end{pmatrix}.
\end{equation}
To diagonalize the linear part, we choose
\[
P=
\begin{pmatrix}
1&1\\
\la&-\la
\end{pmatrix},
\quad
D=P^{-1}\Lambda P=
\begin{pmatrix}
\la&0\\
0&-\la
\end{pmatrix}.
\]
We then introduce $
\mathbf z(t)=P^{-1}\mathbf Y(t).$
Thus \(\mathbf z_\alpha(t)=P^{-1}\mathbf Y_\alpha(t)\) solves
\begin{equation}\label{eq:sec5_diag_system}
\mathbf z'(t)=D\mathbf z(t)+\widetilde{\mathbf g}(t,\mathbf z(t)),
\end{equation}
where $
\widetilde{\mathbf g}(t,\mathbf z)=P^{-1}\mathbf g(t,P\mathbf z).$

\begin{lemma}\label{lem:sec5_individual_decay}
Assume that either the hypotheses of Theorem~\ref{thm:1.3} \textup{$(i)$} or those
of Theorem~\ref{thm:1.5} hold. Then, for any \(\alpha\in Z\),
\[
\lim\limits_{t\to\infty}\mathbf Y_\alpha(t)=0,
\quad
\lim\limits_{t\to\infty}\mathbf z_\alpha(t)=0.
\]
\end{lemma}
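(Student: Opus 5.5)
The decay estimates of Lemma~\ref{lem:solution_sets_nonempty_decay} $(i)$ already give everything we need. For any \(\alpha\in Z\) there exist \(C_\alpha>0\) and \(\rho_\alpha>0\) such that
\[
v_\alpha(\rho)\le C_\alpha\rho^{2-N},\qquad |v_\alpha'(\rho)|\le C_\alpha\rho^{1-N}\qquad\text{for }\rho\ge\rho_\alpha .
\]
The plan is to substitute \(\rho=e^t\) into the definition of \(w_\alpha\) and its derivative, and to check that the resulting exponents are negative when \(N\ge5\) (in fact whenever \(N>2\)).

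For the first component, with \(\la=(N-2)/2\) and \(t\ge\ln\rho_\alpha\),
\[
0<w_\alpha(t)=e^{\la t}v_\alpha(e^t)\le C_\alpha e^{\la t}e^{(2-N)t}=C_\alpha e^{-\frac{N-2}{2}t}\to0 .
\]
For the second component, differentiating gives \(w_\alpha'(t)=\la w_\alpha(t)+e^{(\la+1)t}v_\alpha'(e^t)\). The first term tends to zero by the previous estimate. The second term is bounded by
\[
C_\alpha e^{(\la+1)t}e^{(1-N)t}=C_\alpha e^{-\frac{N-2}{2}t},
\]
because \(\la+1+1-N=-(N-2)/2\). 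Hence \(\mathbf Y_\alpha(t)\to0\) as \(t\to\infty\).

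Since \(\mathbf z_\alpha=P^{-1}\mathbf Y_\alpha\) and \(P^{-1}\) is a fixed linear map, \(\mathbf z_\alpha(t)\to0\) as well. In fact both components decay at the explicit rate \(O(e^{-\la t})\), which will be convenient later.

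The only potential obstacle is the justification of the decay estimates in the asymptotically critical case. Under (A1)--(A3), Lemma~\ref{lem:solution_sets_nonempty_decay} applies directly to every \(v_\alpha\) with \(\alpha\in Z\): these are least energy solutions, and \(0\le K(s)\le Cs^{2^*}\), so \cite[Lemma~4 and Theorem~5]{MR1617704} applies. Under (G1)--(G3), the lemma invokes Proposition~\ref{prop:sec3_groundstate_identification}, which I will take as given. If a more self-contained argument were wanted, one could argue as follows. Radial monotonicity together with \(v_\alpha\in D^{1,2}\) gives the Strauss-type bound \(v_\alpha(\rho)\le C\rho^{(2-N)/2}\). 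Then \(k(v_\alpha)\le Cv_\alpha^{m-1}\) with \(m>2^*\), using Lemma~\ref{lem:prelim_k_local_bounds} on the bounded range \([0,\alpha]\). Bootstrapping through
\[
\rho^{N-1}|v_\alpha'(\rho)|=\int_0^\rho s^{N-1}k(v_\alpha(s))\,ds\le\int_0^\infty s^{N-1}k(v_\alpha)\,ds<\infty
\]
yields \(|v_\alpha'|\le C\rho^{1-N}\), and integrating from \(\rho\) to \(\infty\) gives \(v_\alpha\le C\rho^{2-N}\). The step needing care is the integrability of \(s^{N-1}v_\alpha^{m-1}\). It holds after finitely many bootstrap iterations because \(m-1>2^*-1=(N+2)/(N-2)\).
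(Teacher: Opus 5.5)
Your proposal is correct and follows essentially the same route as the paper. The paper likewise converts the bounds $v_\alpha(\rho)=O(\rho^{2-N})$ and $|v_\alpha'(\rho)|=O(\rho^{1-N})$ from Lemma~\ref{lem:solution_sets_nonempty_decay}~$(i)$ into $|w_\alpha(t)|+|w_\alpha'(t)|=O(e^{-\lambda t})$, and then applies the fixed matrix $P^{-1}$. Your optional self-contained bootstrap is also sound, since $m>2^*$ makes the exponent map $a\mapsto(m-1)a-2$ strictly increasing away from its fixed point, but the paper does not need it.
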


\begin{proof}
By Lemma~\ref{lem:solution_sets_nonempty_decay} \textup{$(i)$} and the definition of $w_{\alpha}$, we obtain
\[
|w_\alpha(t)|+|w_\alpha'(t)|
=O(e^{-\la t})
\quad\text{as }t\to\infty.
\]
Since \(\mathbf z_\alpha(t)=P^{-1}\mathbf Y_\alpha(t)\), it follows that
\[
|\mathbf Y_\alpha(t)|+|\mathbf z_\alpha(t)|=O(e^{-\la t})
\quad\text{as }t\to\infty,
\]
which completes the proof.
\end{proof}

\begin{lemma}\label{lem:sec5_smallness}
Assume that either the hypotheses of Theorem~\ref{thm:1.3} \textup{$(i)$} or those
of Theorem~\ref{thm:1.5} hold. Then there exist \(C_{\mathrm{nl}}>0\) and
\(\delta_0>0\) such that, for any \(|\mathbf z|\le1\) and sufficiently
large \(t\),
\begin{equation}
|\widetilde{\mathbf g}(t,\mathbf z)|
\le
C_{\mathrm{nl}} e^{-\delta_0 t}|\mathbf z|^{m-1},
\quad
|D_{\mathbf z}\widetilde{\mathbf g}(t,\mathbf z)|
\le
C_{\mathrm{nl}} e^{-\delta_0 t}|\mathbf z|^{m-2},
\end{equation}
where $
\delta_0=\la(m-1)-\frac{N+2}{2}>0.$
\end{lemma}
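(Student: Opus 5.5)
The plan is a direct computation from the explicit form of \(\widetilde{\mathbf g}\) together with the local bounds of Lemma~\ref{lem:prelim_k_local_bounds}.

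\textbf{Explicit form of \(\widetilde{\mathbf g}\).} Since \(\mathbf g\) has only a second component, \(\widetilde{\mathbf g}(t,\mathbf z)=P^{-1}(0,\,-e^{\frac{N+2}{2}t}k(e^{-\la t}w))^{T}\), where \(w=z_1+z_2\) is the first component of \(P\mathbf z\). Computing \(P^{-1}=\frac{1}{2\la}\begin{pmatrix}\la&1\\ \la&-1\end{pmatrix}\), we get
\[
\widetilde{\mathbf g}(t,\mathbf z)=-\frac{1}{2\la}e^{\frac{N+2}{2}t}k\bigl(e^{-\la t}(z_1+z_2)\bigr)\begin{pmatrix}1\\-1\end{pmatrix}.
\]
Thus the estimates reduce to bounds on the scalar function \(\phi(t,w)=e^{\frac{N+2}{2}t}k(e^{-\la t}w)\) for \(|w|\le |z_1|+|z_2|\le\sqrt2|\mathbf z|\le\sqrt2\).

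\textbf{Bound on \(\widetilde{\mathbf g}\).} Take \(t\ge0\), so that \(|e^{-\la t}w|\le\sqrt2\), and apply Lemma~\ref{lem:prelim_k_local_bounds} with \(A=\sqrt2\). Since \(k(s)=0\) for \(s\le0\) (because \(g\) is extended by zero and \(r\) is odd), we have \(|k(s)|\le C_A|s|^{m-1}\) for all \(|s|\le A\). Therefore
\[
|\phi|\le C e^{\frac{N+2}{2}t}e^{-\la(m-1)t}|w|^{m-1}\le Ce^{-\delta_0 t}|\mathbf z|^{m-1}.
\]
Note that \(\delta_0>0\) is exactly equivalent to \(m>2^*\): indeed \(\la(m-1)>\frac{N+2}{2}\) means \(m-1>\frac{N+2}{N-2}=2^*-1\).

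\textbf{Bound on \(D_{\mathbf z}\widetilde{\mathbf g}\).} Here \(\partial_{z_i}\phi=e^{\frac{N+2}{2}t}e^{-\la t}k'(e^{-\la t}w)\). By Lemma~\ref{lem:prelim_k_local_bounds}, for \(s>0\) we have \(|k'(s)|\le C_A s^{m-2}\), and \(k'=0\) for \(s<0\). This gives
\[
|\partial_{z_i}\phi|\le Ce^{\frac{N+2}{2}t}e^{-\la(m-1)t}|w|^{m-2}\le Ce^{-\delta_0 t}|\mathbf z|^{m-2}.
\]
Combining both bounds with the constant matrix factor yields \(C_{\mathrm{nl}}\).

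\textbf{Main obstacle.} The only delicate point is regularity at \(s=0\). Since \(m>2^*>2\), the bound \(|k'(s)|\le Cs^{m-2}\) forces \(k'(0)=0\), and together with the zero extension this makes \(k\in C^1(\mathbb R)\). This in turn makes the derivative bound valid uniformly across \(w=0\) and for negative \(w\). Finally, "sufficiently large \(t\)" can be taken simply as \(t\ge0\).
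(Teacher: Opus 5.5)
Your proposal is correct and follows essentially the same route as the paper. Both arguments write $\widetilde{\mathbf g}(t,\mathbf z)=P^{-1}\mathbf g(t,P\mathbf z)$, use $|w|\le\sqrt2|\mathbf z|$, and bound $k(e^{-\la t}w)$ and $k'(e^{-\la t}w)$ via Lemma~\ref{lem:prelim_k_local_bounds} together with the zero extension of $g$. Your explicit computation of $P^{-1}$ and your remark that $t\ge 0$ already suffices are only harmless refinements of that argument.
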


\begin{proof}
Since \(\mathbf Y=P\mathbf z\), it suffices to estimate \(\mathbf g\) in the
original variables. Moreover, 
\(|w|\le \sqrt{2}|\mathbf z|\). By \eqref{first_order_system_nonlinear},
\[
|\mathbf g(t,\mathbf Y)|=e^{\frac{N+2}{2}t}\bigl|k(e^{-\la t}w)\bigr|,\quad
\partial_w\mathbf g(t,\mathbf Y)=
\begin{pmatrix}
0\\
-\,e^{\frac{N+2}{2}t}k'(e^{-\la t}w)e^{-\la t}
\end{pmatrix}.
\]
By the zero extension of \(g\) and Lemma~\ref{lem:prelim_k_local_bounds}, there exists \(C>0\), independent of
\(t\) and \(\mathbf z\), such that, for 
sufficiently large \(t\),
\[
|k(e^{-\la t}w)|\le C e^{-\la(m-1)t}|w|^{m-1},\quad
|k'(e^{-\la t}w)|\le C e^{-\la(m-2)t}|w|^{m-2}.
\]
Hence
\[
|\mathbf g(t,\mathbf Y)|\le C e^{-\delta_0 t}|w|^{m-1}\le C e^{-\delta_0 t}|\mathbf z|^{m-1}
\]
and
\[
|D_{\mathbf Y}\mathbf g(t,\mathbf Y)|\le C e^{-\delta_0 t}|w|^{m-2}\le C e^{-\delta_0 t}|\mathbf z|^{m-2}.
\]
Since \(\widetilde{\mathbf g}(t,\mathbf z)=P^{-1}\mathbf g(t,P\mathbf z)\), we complete the proof.
\end{proof}

Fix $0<\varepsilon<\frac{(N-4)}{2}.$ For \(\rho>0\), let \(B_\rho(0)=\{\mathbf z\in\mathbb R^2:|\mathbf z|<\rho\}\).

\begin{lemma}\label{lem:sec5_local_stable}
Assume that either the hypotheses of Theorem~\ref{thm:1.3} \textup{$(i)$} or
those of Theorem~\ref{thm:1.5} hold. Then there exist \(T_0>0\) and
\(\rho_0\in(0,1)\) such that, for any \(T\ge T_0\), the nonautonomous
system \eqref{eq:sec5_diag_system} has a local stable manifold
\(W_T^s\subset B_{\rho_0}(0)\) at the origin, characterized by  
\begin{equation}\label{eq:sec5_stable_characterization}
W_T^s
=
\left\{
\mathbf z_0=
\begin{pmatrix}
x_0\\
y_0
\end{pmatrix}
\in B_{\rho_0}(0)
\;\middle|\;
\begin{aligned}
&|y_0|<\frac{\rho_0}{2},\quad
\mathbf z(T;T,\mathbf z_0)=\mathbf z_0,\\
&\mathbf z(\cdot;T,\mathbf z_0)
\text{ solves \eqref{eq:sec5_diag_system} on }[T,\infty),\\
&\mathbf z(t;T,\mathbf z_0)\in B_{\rho_0}(0)
\quad\text{for }t\ge T,\\
&\sup_{t\ge T}e^{(\la-\varepsilon)(t-T)}
|\mathbf z(t;T,\mathbf z_0)|<\infty
\end{aligned}
\right\}.
\end{equation}

Moreover, \(W_T^s\) is a
\(C^1\)-graph over the stable eigenspace
\(E^s=\{(0,y):\,y\in\R\}\) of \(D\). 
Furthermore, if \(\mathbf z\) is a solution of \eqref{eq:sec5_diag_system} such
that \(\mathbf z(T)\in W_T^s\), then
\begin{equation}\label{eq:sec5_local_stable_decay}
|\mathbf z(t)|
\le
2|\mathbf z(T)| e^{-(\la-\varepsilon)(t-T)}
\quad\text{for any }t\ge T.
\end{equation}
\end{lemma}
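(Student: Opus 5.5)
We will construct $W_T^s$ by the Lyapunov--Perron method applied to the diagonal system \eqref{eq:sec5_diag_system}, using the uniform nonlinear bounds of Lemma~\ref{lem:sec5_smallness}. Write $\mathbf z=(x,y)$, $\widetilde{\mathbf g}=(\widetilde g_1,\widetilde g_2)$. For $T$ large, $\mathbf z(T)=\mathbf z_0=(x_0,y_0)$, the solutions bounded by $e^{-(\la-\varepsilon)(t-T)}$ are exactly the fixed points of
\[
(\mathcal T\mathbf z)(t)=\begin{pmatrix}
-\displaystyle\int_t^\infty e^{\la(t-s)}\widetilde g_1(s,\mathbf z(s))\,ds\\
e^{-\la(t-T)}y_0+\displaystyle\int_T^t e^{-\la(t-s)}\widetilde g_2(s,\mathbf z(s))\,ds
\end{pmatrix}.
\]
This is the standard variation-of-constants reformulation. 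The unstable component must be integrated from $+\infty$, since any other choice produces $e^{\la t}$ growth. The first step is to check that a solution with the decay in \eqref{eq:sec5_stable_characterization} satisfies $\mathbf z=\mathcal T\mathbf z$, and conversely.

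\textbf{Contraction step.} Work in the weighted space
\[
\mathcal B=\Bigl\{\mathbf z\in C([T,\infty);\R^2):\ \|\mathbf z\|_*=\sup_{t\ge T}e^{(\la-\varepsilon)(t-T)}|\mathbf z(t)|\le 2|y_0|\Bigr\},
\]
with $|y_0|<\rho_0/2$, so that $|\mathbf z(t)|<\rho_0<1$ and Lemma~\ref{lem:sec5_smallness} applies for $t\ge T\ge T_0$. The bound gives
\[
|\widetilde{\mathbf g}(s,\mathbf z(s))|\le C_{\mathrm{nl}}e^{-\delta_0 s}\rho_0^{m-2}|\mathbf z(s)|.
\]
From this, the integrals in $\mathcal T$ are controlled by
\[
C_{\mathrm{nl}}\rho_0^{m-2}e^{-\delta_0T}\,\frac{1}{\varepsilon}\,\|\mathbf z\|_*\,e^{-(\la-\varepsilon)(t-T)}.
\]
The $x$-integral contributes a factor $1/(2\la-\varepsilon)$. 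The $y$-integral contributes $\int_T^t e^{-\varepsilon(t-s)}ds\le1/\varepsilon$. The derivative bound $|D_{\mathbf z}\widetilde{\mathbf g}|\le C_{\mathrm{nl}}e^{-\delta_0 s}\rho_0^{m-2}$ gives the same Lipschitz estimate.

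Choose $T_0$ (and $\rho_0$) so that $C_{\mathrm{nl}}\rho_0^{m-2}e^{-\delta_0T_0}(1/\varepsilon+1)\le 1/4$. Then $\mathcal T$ maps $\mathcal B$ into itself, since $\|\mathcal T\mathbf z\|_*\le|y_0|+\tfrac14\cdot 2|y_0|<2|y_0|$, and it is a $\tfrac14$-contraction. The exponential factor $e^{-\delta_0 T}$ with $\delta_0>0$ is what makes this uniform in $T\ge T_0$. This step is where the restriction $\varepsilon<(N-4)/2$ would enter if one needed $\la-\varepsilon>\la/2$ or similar, but the key point is only that $\varepsilon>0$ keeps the convolution integrable.

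\textbf{Conclusion.} The fixed point $\mathbf z(\cdot;y_0)$ yields
\[
W_T^s=\{(\psi_T(y_0),y_0)\},\qquad \psi_T(y_0)=-\int_T^\infty e^{\la(T-s)}\widetilde g_1(s,\mathbf z(s;y_0))\,ds .
\]
Uniqueness of the fixed point gives the characterization \eqref{eq:sec5_stable_characterization}. Note that any decaying solution in $B_{\rho_0}$ has $\|\mathbf z\|_*<\infty$, lies in a scaled version of $\mathcal B$ by the same estimate, and hence coincides with the fixed point. The bound $\|\mathbf z\|_*\le 2|y_0|\le2|\mathbf z(T)|$ is exactly \eqref{eq:sec5_local_stable_decay}.

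$C^1$ dependence on $y_0$ follows from the uniform contraction principle with parameters. Here we use that $\widetilde{\mathbf g}$ is $C^1$ in $\mathbf z$ with the derivative bound above, and we differentiate under the integral, the derivative being dominated in the weighted norm. Also $\psi_T(0)=0$ and $\psi_T'(0)=0$, since $D\widetilde{\mathbf g}$ vanishes at $0$ because $m>2$, so the graph is tangent to $E^s$.

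The main obstacle is the bookkeeping that makes the constants ($\rho_0$, $T_0$) independent of $T$. We also need to verify that solutions in the characterized set automatically satisfy the fixed-point equation, which requires showing the boundary term $e^{\la(t-S)}x(S)\to0$ as $S\to\infty$; this holds because $|x(S)|\lesssim e^{-(\la-\varepsilon)S}$.
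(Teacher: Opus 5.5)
Your proposal is correct and is essentially the paper's own Lyapunov--Perron argument: the same weighted norm $\sup_{t\ge T}e^{(\lambda-\varepsilon)(t-T)}|\mathbf z(t)|$, the same integral operator with the unstable component integrated from $+\infty$, and the same contraction constant, made small by the factor $e^{-\delta_0 T}$ uniformly in $T\ge T_0$. It also uses the uniform contraction principle for the $C^1$ graph with $h_T(0)=h_T'(0)=0$, and the same absorption estimate $\|\mathbf z\|_*\le|y_0|+c\|\mathbf z\|_*$ for both the reverse inclusion and the decay bound. The only real difference is that you work on the $y_0$-dependent ball $\|\mathbf z\|_*\le 2|y_0|$, while the paper uses the fixed ball $\|\mathbf z\|_*\le\rho_0$ and derives $\|\mathbf z\|_*\le 2|y_0|$ afterwards. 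The uniform contraction principle needs a parameter-independent domain, so run it on the fixed ball (your estimate shows $\mathcal T$ maps that ball into itself as well) and identify the two fixed points by uniqueness.
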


\begin{proof}
This is a standard stable manifold result for nonautonomous perturbations of a
hyperbolic linear system; see, for example,
\cite[Chaps.~4--5]{barreira2008stability}. We give the 
argument for completeness.

Write
\[
\mathbf z(t)=
\begin{pmatrix}
x(t)\\ y(t)
\end{pmatrix},
\quad
\widetilde{\mathbf g}(t,\mathbf z)=
\begin{pmatrix}
f_1(t,\mathbf z)\\
f_2(t,\mathbf z)
\end{pmatrix}.
\]
Then \eqref{eq:sec5_diag_system} is equivalent to
\begin{equation}
\begin{cases}
x'(t)=\la x(t)+f_1(t,\mathbf{z}(t)),\\[1mm]
y'(t)=-\la y(t)+f_2(t,\mathbf{z}(t)).
\end{cases}
\end{equation}

By Lemma~\ref{lem:sec5_smallness}, there exist \(C_{\mathrm{nl}}, T_0>0\) and
\(\delta_0>0\) such that, for \(t\ge T_0\) and 
\(|\mathbf z|\le1\),
\begin{equation}\label{eq:sec5_f_basic}
|\widetilde{\mathbf g}(t,\mathbf z)|
\le
C_{\mathrm{nl}} e^{-\delta_0 t}|\mathbf z|^{m-1},
\quad
|D_{\mathbf z}\widetilde{\mathbf g}(t,\mathbf z)|
\le
C_{\mathrm{nl}} e^{-\delta_0 t}|\mathbf z|^{m-2}.
\end{equation}
After increasing \(T_0\) and decreasing \(\rho_0\in(0,1)\), we may assume that
\begin{equation}\label{eq:sec5_choice_constants_2}
C_{\mathrm{nl}} e^{-\delta_0 T_0}\rho_0^{m-2}
\left(\frac{1}{2\la-\varepsilon+\delta_0}+\frac{1}{\varepsilon}\right)
\le \frac12.
\end{equation}

Fix \(T\ge T_0\). For \(\eta\in\R\) with \(|\eta|<\rho_0/2\), consider
\[
\mathcal X_{*,T}
=
\left\{
\mathbf z\in C([T,\infty),\R^2):
\|\mathbf z\|_{*,T}<\infty
\right\},
\quad
\|\mathbf z\|_{*,T}
=
\sup_{t\ge T}e^{(\la-\varepsilon)(t-T)}|\mathbf z(t)|,
\]
and $
\mathcal B_{*,T}
=
\left\{
\mathbf z\in\mathcal X_{*,T}:
\|\mathbf z\|_{*,T}\le\rho_0
\right\}.$
The space \(\mathcal X_{*,T}\) is a Banach space, and \(\mathcal B_{*,T}\) is closed.
Define \(\Phi_{T,\eta}:\mathcal B_{*,T}\rightarrow\mathcal X_{*,T}\) by
\begin{equation}\label{eq:fixed_point_equation}
(\Phi_{T,\eta}\mathbf z)(t)=
\begin{pmatrix}
-\displaystyle\int_t^\infty e^{\la(t-s)}f_1(s,\mathbf z(s))\,ds\\[4mm]
e^{-\la(t-T)}\eta+\displaystyle\int_T^t e^{-\la(t-s)}f_2(s,\mathbf z(s))\,ds
\end{pmatrix}.  
\end{equation}
Let \(\mathbf z\in\mathcal B_{*,T}\). Since \(|\mathbf z(s)|\le \rho_0\) for $s\ge T$, \eqref{eq:sec5_f_basic} yields
\[
|f_i(s,\mathbf z(s))|
\le
C_{\mathrm{nl}} e^{-\delta_0 s}\rho_0^{m-2}|\mathbf z(s)|,
\quad i=1,2.
\]
By \eqref{eq:fixed_point_equation}, \eqref{eq:sec5_choice_constants_2}, and the
definition of \(\|\cdot\|_{*,T}\),
\begin{equation}\label{eq:sec5_Phi_estimate}
\begin{aligned}
\|\Phi_{T,\eta}\mathbf z\|_{*,T}
&\le
|\eta|+C_{\mathrm{nl}}\rho_0^{m-2}\|\mathbf z\|_{*,T}
\sup_{t\ge T}\int_t^\infty
e^{-(2\la-\varepsilon)(s-t)}e^{-\delta_0s}\,ds\\
&\quad+C_{\mathrm{nl}}\rho_0^{m-2}\|\mathbf z\|_{*,T}
\sup_{t\ge T}\int_T^t
e^{-\varepsilon(t-s)}e^{-\delta_0s}\,ds\\
&\le
|\eta|+C_{\mathrm{nl}}e^{-\delta_0T}\rho_0^{m-2}
\left(
\frac{1}{2\la-\varepsilon+\delta_0}
+\frac{1}{\varepsilon}
\right)
\|\mathbf z\|_{*,T}\\
&\le |\eta|+\frac12\|\mathbf z\|_{*,T}
\le\rho_0.
\end{aligned}
\end{equation}
Thus \(\Phi_{T,\eta}(\mathcal B_{*,T})\subset\mathcal B_{*,T}\).

Next, let \(\mathbf z,\bar{\mathbf z}\in\mathcal B_{*,T}\). By the mean value
theorem and \eqref{eq:sec5_f_basic},
\[
|f_i(s,\mathbf z(s))-f_i(s,\bar{\mathbf z}(s))|
\le
C_{\mathrm{nl}} e^{-\delta_0 s}\rho_0^{m-2}|\mathbf z(s)-\bar{\mathbf z}(s)|,
\quad i=1,2.
\]
Therefore
\[
\|\Phi_{T,\eta}\mathbf z-\Phi_{T,\eta}\bar{\mathbf z}\|_{*,T}
\le\frac12\|\mathbf z-\bar{\mathbf z}\|_{*,T}.
\]
Hence, by Banach fixed point theorem, there exists a unique
\(\mathbf z_{T,\eta}\in\mathcal B_{*,T}\) such that
\(\Phi_{T,\eta}\mathbf z_{T,\eta}=\mathbf z_{T,\eta}\).

Define
\[
h_T(\eta)=-\int_T^\infty e^{\la(T-s)}f_1(s,\mathbf z_{T,\eta}(s))\,ds.
\]
Then
\[
\mathbf z_{T,\eta}(T)=
\begin{pmatrix}
h_T(\eta)\\
\eta
\end{pmatrix}.
\]
Moreover, the map \((\eta,\mathbf z)\mapsto \Phi_{T,\eta}\mathbf z\) is \(C^1\)
on a neighborhood of
\((-\rho_0/2,\rho_0/2)\times\mathcal B_{*,T}\), and the contraction constant
obtained above is uniform with respect to \(\eta\). Hence, by the uniform
contraction principle
\cite[Theorem~1.5]{teschl2001nonlinear}, $
h_T\in C^1((-\rho_0/2,\rho_0/2)).$ 
Since \(f_i(t,0)=0\) for \(i=1,2\), uniqueness of the fixed point yields
\(\mathbf z_{T,0}=0\), and hence \(h_T(0)=0\). Moreover, differentiating
\(\mathbf z_{T,\eta}=\Phi_{T,\eta}\mathbf z_{T,\eta}\) at \(\eta=0\) and using
\(D_{\mathbf z}f_i(t,0)=0\), we obtain \(h_T'(0)=0\).
We therefore set
\[
W_T^s=
\left\{
\begin{pmatrix}
h_T(\eta)\\
\eta
\end{pmatrix}
:\ |\eta|<\frac{\rho_0}{2}
\right\},
\]
which is a \(C^1\)-graph tangent to \(E^s\) at the origin. If
\(\mathbf z(T)\in W_T^s\), then \(\mathbf z=\mathbf z_{T,\eta}\) for some
\(|\eta|<\rho_0/2\). By \eqref{eq:sec5_Phi_estimate}, $
\|\mathbf z\|_{*,T}
\le
|\eta|+\frac12\|\mathbf z\|_{*,T}.$
Hence $
\|\mathbf z\|_{*,T}\le 2|\eta|\le 2|\mathbf z(T)|.$
In particular, \(W_T^s\subset B_{\rho_0}(0)\). Moreover, for \(t\ge T\),
\[
|\mathbf z(t)|
\le
e^{-(\la-\varepsilon)(t-T)}\|\mathbf z\|_{*,T}
\le
2|\mathbf z(T)|e^{-(\la-\varepsilon)(t-T)}.
\]
This proves \eqref{eq:sec5_local_stable_decay}.

It remains to prove the reverse inclusion. Suppose that \(\mathbf z\) satisfies
the conditions on the right-hand side of
\eqref{eq:sec5_stable_characterization}. Writing
\(\mathbf z=(x,y)^{\mathsf T}\), let \(\eta=y(T)\). The variation-of-constants formulas show that \(\mathbf z\) satisfies the integral equation in \eqref{eq:fixed_point_equation}. Repeating the estimate leading to
\eqref{eq:sec5_Phi_estimate}, we obtain $
\|\mathbf z\|_{*,T}
\le |\eta|+\frac12\|\mathbf z\|_{*,T}.$
Thus, \(\|\mathbf z\|_{*,T}\le2|\eta|<\rho_0\). Thus
\(\mathbf z\in\mathcal B_{*,T}\), and uniqueness yields
\(\mathbf z=\mathbf z_{T,\eta}\). Therefore \(\mathbf z(T)\in W_T^s\). This completes the proof.
\end{proof}

\begin{lemma}\label{lem:sec5_decay_implies_local_stable}
Assume that either the hypotheses of Theorem~\ref{thm:1.3} \textup{$(i)$} or
those of Theorem~\ref{thm:1.5} hold. Then there exists \(T_1\ge T_0\) such
that if \(\alpha\in Z\), \(T\ge T_1\), and $
|\mathbf z_\alpha(T)|<\frac{\rho_0}{2},$ then $
\mathbf z_\alpha(T)\in W_T^s.$
\end{lemma}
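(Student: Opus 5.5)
The plan is to verify, for $\mathbf z=\mathbf z_\alpha$ restricted to $[T,\infty)$, each condition in the characterization \eqref{eq:sec5_stable_characterization} of Lemma~\ref{lem:sec5_local_stable}; the reverse inclusion proved there then gives $\mathbf z_\alpha(T)\in W_T^s$. The conditions $\mathbf z_\alpha(T)\in B_{\rho_0}(0)$ and $|y_\alpha(T)|\le|\mathbf z_\alpha(T)|<\rho_0/2$ hold by hypothesis, and $\mathbf z_\alpha$ solves \eqref{eq:sec5_diag_system} on $\mathbb R$. Two things remain: that $\mathbf z_\alpha(t)$ stays in $B_{\rho_0}(0)$ for all $t\ge T$, and that $\sup_{t\ge T}e^{(\la-\varepsilon)(t-T)}|\mathbf z_\alpha(t)|<\infty$. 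The second is immediate from the decay $|\mathbf z_\alpha(t)|=O(e^{-\la t})$ in the proof of Lemma~\ref{lem:sec5_individual_decay}, since $\varepsilon>0$.

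The main obstacle is the confinement $|\mathbf z_\alpha(t)|<\rho_0$ for all $t\ge T$. Only $|\mathbf z_\alpha(T)|<\rho_0/2$ is given, so the trajectory could a priori leave the ball and later return. To rule this out I will avoid the ball constraint in the characterization and use a bootstrap with the integral equation. Set $\eta=y_\alpha(T)$. Because $\mathbf z_\alpha(t)\to0$, the variation-of-constants formula for the unstable component forces
\[
x_\alpha(t)=-\int_t^\infty e^{\la(t-s)}f_1(s,\mathbf z_\alpha(s))\,ds ,
\]
and the stable component satisfies $y_\alpha(t)=e^{-\la(t-T)}\eta+\int_T^t e^{-\la(t-s)}f_2(s,\mathbf z_\alpha(s))\,ds$. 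Hence $\mathbf z_\alpha=\Phi_{T,\eta}\mathbf z_\alpha$ on $[T,\infty)$.

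For the bootstrap, let $\tau^*=\sup\{\tau\ge T:\ |\mathbf z_\alpha(t)|\le\rho_0 \text{ on }[T,\tau]\}>T$. If $\tau^*=\infty$, the estimate \eqref{eq:sec5_Phi_estimate} applies directly. If $\tau^*<\infty$, the obstruction is the $x$-integral over $[\tau^*,\infty)$, where the smallness bound is not yet available. I will handle this with a global bound: since $v_\alpha$ is positive and decreasing with $v_\alpha\le\alpha\le A^*$ (Lemma~\ref{lem:sec5_alpha_bounds}), Lemma~\ref{lem:prelim_k_local_bounds} gives $0\le k(s)\le C_{A^*}s^{m-1}$ on $[0,A^*]$. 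This yields
\[
|f_i(s,\mathbf z_\alpha(s))|\le Ce^{-\delta_0 s}|w_\alpha(s)|^{m-1}
\]
for all $s$ with no smallness needed. I will enlarge $T_1$ so that $C e^{-\delta_0 T_1}$ absorbs the constants even when $|\mathbf z|\le M$. For the needed uniform bound $M:=\sup_{\alpha\in Z}\sup_{t\ge T_1}|\mathbf z_\alpha(t)|$, I will use the Strauss radial lemma together with $\|\nabla v_\alpha\|_2^2=NE$ from \eqref{eq:sec5_ground_state_energy}. This gives $v_\alpha(\rho)\le C\rho^{-(N-2)/2}$, so $w_\alpha$ is uniformly bounded. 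Likewise $|v_\alpha'(\rho)|\le \rho^{1-N}\int_0^\rho s^{N-1}k(v_\alpha)\,ds$, combined with a uniform $L^{2^*}$ bound on $v_\alpha$ and Hölder, bounds $w_\alpha'$.

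With $M$ in hand, the weighted norm satisfies $\|\mathbf z_\alpha\|_{*,T}\le|\eta|+\tfrac12\|\mathbf z_\alpha\|_{*,T}$ once $T\ge T_1$, where $T_1$ is chosen so that $C_{A^*}M^{m-2}e^{-\delta_0T_1}(\cdots)\le\tfrac12$. This gives $\|\mathbf z_\alpha\|_{*,T}\le 2|\eta|<\rho_0$, so $\mathbf z_\alpha(t)\in B_{\rho_0}(0)$ for all $t\ge T$, and by uniqueness of the fixed point in $\mathcal B_{*,T}$ we get $\mathbf z_\alpha=\mathbf z_{T,\eta}$, that is, $\mathbf z_\alpha(T)\in W_T^s$. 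The uniformity of $M$ over $\alpha\in Z$ is the step I expect to require the most care; if the derivative bound proves awkward, I will fall back on the pointwise bound $|v'_\alpha(\rho)|\le C\rho^{-1}\sup_{B_\rho}v_\alpha^{2^*-1}\rho^2$ derived from the ODE.
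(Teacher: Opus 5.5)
Your proposal is correct, but it closes the confinement step differently from the paper.

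\textbf{The paper's route.} It proves confinement by a first-exit-time contradiction. On \([T,\tau]\) it uses the local bound \(|\widetilde{\mathbf g}(t,\mathbf z)|\le C_{\mathrm{nl}}\rho_0^{m-2}e^{-\delta_0 t}|\mathbf z|\). On \([\tau,\infty)\) it uses only the uniform bound \(|\widetilde{\mathbf g}(t,\mathbf z_\alpha(t))|\le C_g e^{-\delta_0 t}\), which is not proportional to \(|\mathbf z|\); this comes from \(w_\alpha\le M_w\) and \(v_\alpha(e^t)\le 1\) for \(t\ge T_1\). It concludes \(\sup_{[T,\tau]}|\mathbf z_\alpha|\le\frac56\rho_0\), and then invokes the characterization \eqref{eq:sec5_stable_characterization}.

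\textbf{Your route.} You obtain a forcing bound that is linear in \(|\mathbf z_\alpha(s)|\) on all of \([T,\infty)\), with an \(\alpha\)-independent constant:
\(|f_i(s,\mathbf z_\alpha(s))|\le Ce^{-\delta_0 s}w_\alpha(s)^{m-1}\le C M_w^{m-2}e^{-\delta_0 s}|\mathbf z_\alpha(s)|\).
This uses \(v_\alpha\le A^*\), Lemma~\ref{lem:prelim_k_local_bounds} on \([0,A^*]\), the radial-lemma bound \(w_\alpha\le M_w\) (the same key ingredient as the paper), and \(|w|\le\sqrt2|\mathbf z|\). With this bound, the contraction estimate \eqref{eq:sec5_Phi_estimate} applies verbatim to \(\mathbf z_\alpha=\Phi_{T,\eta}\mathbf z_\alpha\). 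Since \(\|\mathbf z_\alpha\|_{*,T}<\infty\) by the individual \(O(e^{-\la t})\) decay, absorption gives \(\|\mathbf z_\alpha\|_{*,T}\le 2|\eta|<\rho_0\) in one stroke. That yields confinement, membership in \(\mathcal B_{*,T}\), \(\mathbf z_\alpha=\mathbf z_{T,\eta}\) by uniqueness of the fixed point, and the decay \eqref{eq:sec5_local_stable_decay} directly.

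\textbf{Superfluous parts.} Your \(\tau^*\) case distinction is never actually needed. Nor is the bound on \(w_\alpha'\): the nonlinearity depends only on \(w=x+y\), so the uniform bound \(M_w\) alone suffices. Your H\"older estimate \(\rho^{N/2}|v_\alpha'(\rho)|\le C\) is correct. The fallback \(|v_\alpha'(\rho)|\le C\rho\,\alpha^{2^*-1}\), however, would not bound \(w_\alpha'\) uniformly in \(t\).

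\textbf{Trade-off.} Your route is shorter and avoids the two-regime bookkeeping, because the linear global bound makes the weighted contraction work without any ball restriction. The paper's route needs only a crude, non-proportional forcing bound beyond the exit time, at the price of the contradiction argument. You could also dispense with \(A^*\) by taking \(T_1\) so large that \(M_w e^{-\la T_1}\le1\), exactly as the paper does.
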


\begin{proof}
Since \(v_\alpha\in D^{1,2}_{\mathrm{rad}}(\R^N)\) for \(\alpha\in Z\), the
radial lemma yields $
v_\alpha(\rho)\le C_N \|\nabla v_\alpha\|_2\rho^{-\la}.$
Because \(\|\nabla v_\alpha\|_2^2=NE\), there exists a constant
\(M_w>0\), independent of \(\alpha\in Z\), such that
\begin{equation}
0<w_\alpha(t)\le M_w
\quad\text{for any }t\in\R,\ \alpha\in Z.
\end{equation}
Choose \(T_1\ge T_0\) so large that $
M_w e^{-\la T_1}\le 1.$ 
Then, for any \(t\ge T_1\) and every \(\alpha\in Z\),
\[
0\le e^{-\la t}w_\alpha(t)\le 1.
\]
Hence, by Lemma~\ref{lem:prelim_k_local_bounds},
\[
k(e^{-\la t}w_\alpha(t))
\le C e^{-\la(m-1)t}w_\alpha(t)^{m-1}
\le C e^{-\la(m-1)t},
\]
and therefore, recalling the definitions of \(\mathbf g\) and
\(\widetilde{\mathbf g}\), there exists a constant \(C_g>0\), independent of
\(\alpha\in Z\), such that
\begin{equation}\label{eq:sec5_global_forcing_bound}
|\widetilde{\mathbf g}(t,\mathbf z_\alpha(t))|
\le C_g e^{-\delta_0 t}
\quad\text{for any }t\ge T_1,\ \alpha\in Z,
\end{equation}
where \(\delta_0\) is as in Lemma~\ref{lem:sec5_smallness}.

On the other hand, by Lemma~\ref{lem:sec5_smallness}, for \(|\mathbf z|\le \rho_0\),
\begin{equation}\label{eq:sec5_local_forcing_bound}
|\widetilde{\mathbf g}(t,\mathbf z)|
\le C_{\mathrm{nl}} e^{-\delta_0 t}|\mathbf z|^{m-1}
\le C_{\mathrm{nl}}\rho_0^{m-2} e^{-\delta_0 t}|\mathbf z|.
\end{equation}
We claim that, after increasing \(T_1\) if necessary, the following holds: if
\(\alpha\in Z\), \(T\ge T_1\), and $
|\mathbf z_\alpha(T)|<\frac{\rho_0}{2},$
then $
|\mathbf z_\alpha(t)|<\rho_0$ for any $t\ge T.$
Suppose by contradiction that there exists 
\begin{equation}\label{eq:definition_tau}
    \tau=\inf\{t\ge T:\ |\mathbf z_\alpha(t)|=\rho_0\}\in(T,\infty).
\end{equation}
Hence \(|\mathbf z_\alpha(t)|\le \rho_0\) for \(t\in[T,\tau]\). By Lemma~\ref{lem:sec5_individual_decay}, $
|\mathbf z_\alpha(t)|=O(e^{-\la t})$ as $t\to\infty.$
Hence the variation-of-constants formula yields
\eqref{eq:fixed_point_equation} for \(\mathbf z_\alpha\). 
Using \eqref{eq:fixed_point_equation}, together with
\eqref{eq:sec5_local_forcing_bound} on \([T,\tau]\) and
\eqref{eq:sec5_global_forcing_bound} on \([\tau,\infty)\), there exist constants
\(C_1,C_2>0\), independent of \(\alpha\in Z\), such that
\[
\sup_{t\in[T,\tau]}|\mathbf z_\alpha(t)|
\le
|\mathbf z_\alpha(T)|
+
C_1 e^{-\delta_0 T}
+
C_2 \rho_0^{m-2} e^{-\delta_0 T}
\sup_{t\in[T,\tau]}|\mathbf z_\alpha(t)|.
\]
After increasing \(T_1\) once more, we may assume that
\[
C_1 e^{-\delta_0 T_1}\le \frac{\rho_0}{8},
\quad
C_2 \rho_0^{m-2} e^{-\delta_0 T_1}\le \frac14.
\]
Since \(|\mathbf z_\alpha(T)|\le \rho_0/2\), it follows that
\[
\sup_{t\in[T,\tau]}|\mathbf z_\alpha(t)|
\le
\frac{\rho_0}{2}+\frac{\rho_0}{8}
+\frac14 \sup_{t\in[T,\tau]}|\mathbf z_\alpha(t)|,
\]
and hence $
\sup_{t\in[T,\tau]}|\mathbf z_\alpha(t)|\le \frac56\rho_0<\rho_0,$
which contradicts the definition of \(\tau\) in \eqref{eq:definition_tau}. This
proves the claim. The proof of Lemma~\ref{lem:sec5_individual_decay} yields
\(|\mathbf z_\alpha(t)|=O(e^{-\la t})\) as \(t\rightarrow\infty\), and hence
\[
\sup_{t\ge T}e^{(\la-\varepsilon)(t-T)}
|\mathbf z_\alpha(t)|<\infty.
\]
We conclude that \(\mathbf z_\alpha(T)\in W_T^s\).
\end{proof}

\begin{lemma}\label{lem:sec5_uniform_entry}
Assume that either the hypotheses of Theorem~\ref{thm:1.3} \textup{$(i)$} or
those of Theorem~\ref{thm:1.5} hold. Then there exist
\(T_Z\ge T_1\) and \(C_Z>0\) such that
\[
|\mathbf z_\alpha(t)|
\le C_Z e^{-(\la-\varepsilon)(t-T_Z)}
\quad\text{for any }t\ge T_Z,\ \alpha\in Z.
\]
\end{lemma}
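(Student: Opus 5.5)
The plan is to find a single time \(T_Z\ge T_1\), independent of \(\alpha\in Z\), at which every \(\mathbf z_\alpha(T_Z)\) lies in the ball of radius \(\rho_0/2\). Then Lemma~\ref{lem:sec5_decay_implies_local_stable} gives \(\mathbf z_\alpha(T_Z)\in W_{T_Z}^s\), and \eqref{eq:sec5_local_stable_decay} yields
\[
|\mathbf z_\alpha(t)|\le 2|\mathbf z_\alpha(T_Z)|e^{-(\la-\varepsilon)(t-T_Z)}\le \rho_0 e^{-(\la-\varepsilon)(t-T_Z)},
\]
so \(C_Z=\rho_0\) works. The whole task is therefore a uniform entry estimate for the family \(\{\mathbf z_\alpha\}_{\alpha\in Z}\).

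For this entry estimate I would use compactness of \(Z\) (Proposition~\ref{prop:sec5_Z_closed}) together with continuous dependence (Lemma~\ref{lem:sec5_continuity}), arguing by covering.

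\emph{Step 1 (pointwise entry).} Fix \(\beta\in Z\). By Lemma~\ref{lem:sec5_individual_decay}, \(\mathbf z_\beta(t)\to0\), so there is \(T_\beta\ge T_1\) with \(|\mathbf z_\beta(T_\beta)|<\rho_0/4\).

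\emph{Step 2 (local stability in \(\alpha\)).} Since \(\mathbf z_\alpha(T)=P^{-1}(e^{\la T}v_\alpha(e^T),\,\la e^{\la T}v_\alpha(e^T)+e^{(\la+1)T}v_\alpha'(e^T))^{\mathsf T}\) depends only on \(v_\alpha,v_\alpha'\) at \(\rho=e^T\), Lemma~\ref{lem:sec5_continuity} on \([0,e^{T_\beta}]\) gives a neighborhood \(U_\beta\) of \(\beta\) such that \(|\mathbf z_\alpha(T_\beta)|<\rho_0/2\) for all \(\alpha\in U_\beta\). To apply that lemma I take \(I=[a_*,A^*]\), using Lemma~\ref{lem:sec5_alpha_bounds}.

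\emph{Step 3 (finite cover).} Compactness of \(Z\) gives a finite subcover \(U_{\beta_1},\dots,U_{\beta_k}\). For \(\alpha\in U_{\beta_j}\cap Z\), Lemma~\ref{lem:sec5_decay_implies_local_stable} and \eqref{eq:sec5_local_stable_decay} at \(T=T_{\beta_j}\) give \(|\mathbf z_\alpha(t)|\le\rho_0 e^{-(\la-\varepsilon)(t-T_{\beta_j})}\) for \(t\ge T_{\beta_j}\). Setting \(T_Z=\max_j T_{\beta_j}\), we have \(T_Z\ge T_{\beta_j}\) for each \(j\), so for \(t\ge T_Z\)
\[
|\mathbf z_\alpha(t)|\le\rho_0 e^{(\la-\varepsilon)(T_Z-T_{\beta_j})}e^{-(\la-\varepsilon)(t-T_Z)}\le \rho_0 e^{(\la-\varepsilon)T_Z}e^{-(\la-\varepsilon)(t-T_Z)}.
\]
This proves the claim with \(C_Z=\rho_0e^{(\la-\varepsilon)T_Z}\). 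Alternatively, the decay bound shows \(|\mathbf z_\alpha(T_Z)|\le\rho_0/2\) directly, which would give the cleaner constant \(\rho_0\).

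The main obstacle is Step 2: passing from \(C^1([0,R])\) convergence of \(v_\alpha\) to smallness of \(\mathbf z_\alpha(T_\beta)\) uniformly near \(\beta\). This works only because \(T_\beta\) is a fixed finite time and \(\mathbf z_\alpha(T_\beta)\) is a continuous function of \((v_\alpha(e^{T_\beta}),v_\alpha'(e^{T_\beta}))\). Compactness of \(Z\) is essential here, since without it the entry times \(T_\beta\) could escape to infinity.
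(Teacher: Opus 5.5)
Your proposal is correct and follows the paper's own proof. The ingredients are the same: an individual entry time $T_\beta\ge T_1$ from Lemma~\ref{lem:sec5_individual_decay}, continuity of $\alpha\mapsto\mathbf z_\alpha(T_\beta)$ via Lemma~\ref{lem:sec5_continuity}, Lemma~\ref{lem:sec5_decay_implies_local_stable} to place $\mathbf z_\alpha(T_\beta)$ in $W^s_{T_\beta}$, a finite subcover of the compact set $Z$, and \eqref{eq:sec5_local_stable_decay} with $T_Z$ the largest entry time. The only difference is that the paper uses the thresholds $\rho_0/8,\rho_0/4$ instead of your $\rho_0/4,\rho_0/2$.

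One small point about the constant. Since $T_Z\ge T_{\beta_j}$, the factor that actually arises is $e^{-(\la-\varepsilon)(T_Z-T_{\beta_j})}\le 1$, so $C_Z=\rho_0$ follows immediately; your positive exponent is a harmless overestimate, and the paper makes the same one. However, your aside that the decay bound gives $|\mathbf z_\alpha(T_Z)|\le\rho_0/2$ is not justified, since it only yields $|\mathbf z_\alpha(T_Z)|<\rho_0$. Nothing in the main argument depends on that aside.
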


\begin{proof}
Set \(\delta_*=\rho_0/8\) and fix \(\alpha\in Z\). By
Lemma~\ref{lem:sec5_individual_decay},
\(\mathbf z_\alpha(t)\rightarrow0\) as \(t\rightarrow\infty\). Hence we may
choose \(T_\alpha\ge T_1\) so large that $
|\mathbf z_\alpha(T_\alpha)|<\delta_*.$ 
By Lemma~\ref{lem:sec5_continuity}, the map
\(\beta\mapsto \mathbf z_\beta(T_\alpha)\) is continuous on \(Z\). Therefore
there exists an open neighborhood \(U_\alpha\subset Z\) of \(\alpha\) such that $
|\mathbf z_\beta(T_\alpha)|<2\delta_*=\frac{\rho_0}{4}$ for any $\beta\in U_\alpha.$ 
Since \(2\delta_*<\rho_0/2\), Lemma~\ref{lem:sec5_decay_implies_local_stable}
implies that $
\mathbf z_\beta(T_\alpha)\in W_{T_\alpha}^s
$ for any $\beta\in U_\alpha.$
Since \(\{U_\alpha\}_{\alpha\in Z}\) is an open cover of the compact set \(Z\),
we can choose a finite subcover $
Z\subset U_{\alpha_1}\cup\cdots\cup U_{\alpha_L},$
and set
\[
T_Z=\max\{T_{\alpha_1},\dots,T_{\alpha_L}\}.
\]
Let \(\tilde{\alpha}\in Z\). We can choose \(j\in\{1,\dots,L\}\) such that
\(\tilde{\alpha}\in U_{\alpha_j}\). Then
\[
\mathbf z_{\tilde{\alpha}}(\tau_j)\in W_{\tau_j}^s,
\quad
|\mathbf z_{\tilde{\alpha}}(\tau_j)|<\frac{\rho_0}{4}.
\]
Applying \eqref{eq:sec5_local_stable_decay}, for any \(t\ge T_Z\), we obtain
\[
|\mathbf z_{\tilde{\alpha}}(t)|
\le
2|\mathbf z_{\tilde{\alpha}}(\tau_j)| e^{-(\la-\varepsilon)(t-\tau_j)}
\le C_Z e^{-(\la-\varepsilon)(t-T_Z)},
\]
where
\[
C_Z=\frac{\rho_0}{2}
\max_{1\le j\le L}e^{(\la-\varepsilon)(T_Z-\tau_j)}.
\]
Since \(\tilde{\alpha}\in Z\) was arbitrary, the proof is complete.
\end{proof}

\begin{proposition}
Assume that either the hypotheses of Theorem~\ref{thm:1.3} \textup{$(i)$} or those
of Theorem~\ref{thm:1.5} hold. Set \(u_\alpha=r(v_\alpha)\). Then there exists
\(C>0\), independent of \(\alpha\in Z\), such that
\begin{equation}\label{eq:sec5_uniform_decay}
u_\alpha(\rho)
\le C\rho^{-(N-2)+\varepsilon}
\quad\text{for any }\rho\ge e^{T_Z},\ \alpha\in Z.
\end{equation}
Consequently,
\begin{equation}\label{eq:sec5_uniform_tail}
\sup_{\alpha\in Z}\int_{|x|>R}u_\alpha(x)^2\,dx\to0
\quad \text{as } R\to\infty.
\end{equation}
\end{proposition}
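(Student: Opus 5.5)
The plan is to translate the uniform exponential decay of Lemma~\ref{lem:sec5_uniform_entry} back to the original radial variable. Then we pass from $v_\alpha$ to $u_\alpha$ using $r(t)\le t$, and finally integrate the resulting power bound over the exterior of a ball.

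First, since $\mathbf Y_\alpha=P\mathbf z_\alpha$, we have
\[
|w_\alpha(t)|\le|\mathbf Y_\alpha(t)|\le\|P\|\,|\mathbf z_\alpha(t)|.
\]
Lemma~\ref{lem:sec5_uniform_entry} then gives, for all $t\ge T_Z$ and $\alpha\in Z$,
\[
w_\alpha(t)\le \|P\|C_Z e^{(\la-\varepsilon)T_Z}e^{-(\la-\varepsilon)t}.
\]
The constant here is independent of $\alpha$, because $T_Z$ and $C_Z$ are. Recalling $w_\alpha(t)=e^{\la t}v_\alpha(e^t)$ and setting $\rho=e^t$, this becomes
\[
v_\alpha(\rho)\le C\rho^{-\la}\rho^{-(\la-\varepsilon)}=C\rho^{-(N-2)+\varepsilon}
\quad\text{for }\rho\ge e^{T_Z},
\]
since $2\la=N-2$. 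By Lemma~\ref{lem:prelim_r_large_bounds}~$(ii)$ we have $0<u_\alpha=r(v_\alpha)\le v_\alpha$, and \eqref{eq:sec5_uniform_decay} follows.

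For the tail estimate \eqref{eq:sec5_uniform_tail}, take $R\ge e^{T_Z}$ and integrate in polar coordinates:
\[
\int_{|x|>R}u_\alpha^2\,dx\le C\,|\mathbb S^{N-1}|\int_R^\infty \rho^{-2(N-2)+2\varepsilon+N-1}\,d\rho .
\]
The integral converges precisely when $-2(N-2)+2\varepsilon+N<0$, that is, when $\varepsilon<(N-4)/2$. This is exactly the standing choice of $\varepsilon$, and it is the reason $N\ge5$ is needed. The bound then equals $CR^{-(N-4)+2\varepsilon}$, which tends to $0$ as $R\to\infty$ uniformly in $\alpha\in Z$.

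I expect no serious obstacle here, since all the uniformity is already contained in Lemma~\ref{lem:sec5_uniform_entry}. The only points to check are these:
\begin{itemize}
\item the relation $\rho=e^t$ between the Emden--Fowler time and the radius;
\item the fact that $w_\alpha>0$ is controlled by the full vector norm $|\mathbf z_\alpha|$, which only costs the operator norm of $P$;
\item the exponent bookkeeping $-\la-(\la-\varepsilon)=-(N-2)+\varepsilon$, together with the integrability condition, which is where the restriction $\varepsilon<(N-4)/2$ enters.
\end{itemize}
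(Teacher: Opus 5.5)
Your proposal is correct and follows essentially the same route as the paper's proof. Both arguments convert the uniform Emden--Fowler decay of Lemma~\ref{lem:sec5_uniform_entry} into a bound on $v_\alpha$ via $\mathbf Y_\alpha=P\mathbf z_\alpha$ and $v_\alpha(\rho)=\rho^{-\lambda}w_\alpha(\ln\rho)$, then pass to $u_\alpha$ using $r(t)\le t$, and finally integrate $\rho^{-N+3+2\varepsilon}$ over $(R,\infty)$ using the standing choice $\varepsilon<(N-4)/2$.
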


\begin{proof}
By Lemma~\ref{lem:sec5_uniform_entry} and
\(\mathbf Y_\alpha(t)=P\mathbf z_\alpha(t)\), we have $
|w_\alpha(t)|\le |\mathbf Y_\alpha(t)|
\le C e^{-(\la-\varepsilon)(t-T_Z)}$ for any $t\ge T_Z$ and $\alpha\in Z.$
Let \(\rho\ge e^{T_Z}\). Since $
v_\alpha(\rho)=\rho^{-\la}w_\alpha(\ln\rho),$ 
we obtain
\[
v_\alpha(\rho)
\le
C \rho^{-\la}\rho^{-(\la-\varepsilon)}
=
C\rho^{-(N-2)+\varepsilon}.
\]
It follows from Lemma~\ref{lem:prelim_r_large_bounds} \textup{$(ii)$} that
\[
u_\alpha(\rho)=r(v_\alpha(\rho))\le v_\alpha(\rho)\le C\rho^{-(N-2)+\varepsilon}.
\]
This proves \eqref{eq:sec5_uniform_decay}.

Finally, for any \(R\ge e^{T_Z}\),
\[
\begin{aligned}
\int_{|x|>R}u_\alpha(x)^2\,dx
&=|\mathbb S^{N-1}|\int_R^\infty u_\alpha(\rho)^2\,\rho^{N-1}\,d\rho\\
&\le C\int_R^\infty \rho^{-2(N-2)+2\varepsilon}\rho^{N-1}\,d\rho
= C\int_R^\infty \rho^{-N+3+2\varepsilon}\,d\rho.
\end{aligned}
\]
Since \(\varepsilon<\frac{N-4}{2}\), we have
\(-N+3+2\varepsilon<-1\). Therefore the last integral tends to \(0\) as
\(R\to\infty\), uniformly in \(\alpha\in Z\). This proves
\eqref{eq:sec5_uniform_tail}.
\end{proof}

\subsection[Continuity of the solution map and compactness]{Continuity of the solution map and compactness}

\begin{proposition}\label{prop:sec5_solution_map_continuity}
Assume that either the hypotheses of Theorem~\ref{thm:1.3} \textup{$(i)$} or
those of Theorem~\ref{thm:1.5} hold. Then the solution map \(U\) defined in
\eqref{eq:intro_solution_maps} is continuous. Consequently, the mass map \(M\)
defined in \eqref{eq:intro_mass_maps} is continuous.
\end{proposition}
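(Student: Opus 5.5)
Fix \(\alpha_n\to\alpha\) in \(Z\) and set \(v_n=v_{\alpha_n}\), \(u_n=r(v_n)\), \(u=r(v_\alpha)\). The goal is \(d_X(u_n,u)\to0\), i.e. \(u_n\to u\) in \(H^1(\R^N)\) and \(\nabla(u_n^2)\to\nabla(u^2)\) in \(L^2(\R^N)\). Since the limit \(u\) is fixed, it suffices to show that every subsequence has a further subsequence converging to \(u\). Continuity of \(M\) then follows at once from \(|M(\alpha_n)-M(\alpha)|\le (\|u_n\|_2+\|u\|_2)\|u_n-u\|_2\).

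\textbf{Step 1: \(v_n\to v_\alpha\) strongly in \(D^{1,2}\).} By Lemma~\ref{lem:sec5_continuity}, \(v_n\to v_\alpha\) in \(C^1([0,R])\) for every \(R>0\). By \eqref{eq:sec5_ground_state_energy}, \(\|\nabla v_n\|_2^2=NE=\|\nabla v_\alpha\|_2^2\). Exactly as in Proposition~\ref{prop:sec5_Z_closed}, \(v_n\rightharpoonup v_\alpha\) in \(D^{1,2}\). Weak convergence together with convergence of norms gives \(\nabla v_n\to\nabla v_\alpha\) in \(L^2\).

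\textbf{Step 2: \(L^2\) convergence of \(u_n\) and of \(v_n\).} Using \(0\le u_n\le v_n\), the uniform pointwise decay \(v_\alpha(\rho)\le C\rho^{-(N-2)+\varepsilon}\) established in the proof of \eqref{eq:sec5_uniform_decay}, and \(\varepsilon<(N-4)/2\), the tails \(\int_{|x|>R}(v_n^2+u_n^2)\) are small uniformly in \(n\). On \(B_R\), \(C^1\) convergence together with the Lipschitz bound \(|r(a)-r(b)|\le|a-b|\) gives \(u_n\to u\) and \(v_n\to v_\alpha\) uniformly. Hence \(u_n\to u\) and \(v_n\to v_\alpha\) in \(L^2(\R^N)\).

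\textbf{Step 3: gradient terms.} We have \(\nabla u_n=r'(v_n)\nabla v_n\) and \(\nabla(u_n^2)=2r(v_n)r'(v_n)\nabla v_n\). The multipliers \(r'\) and \(2rr'\) are bounded: \(0<r'\le1\), and \(2r(t)r'(t)=2r/\sqrt{1+2r^2}\le\sqrt2\). They are also continuous. Write, for instance,
\[
\nabla u_n-\nabla u=r'(v_n)(\nabla v_n-\nabla v_\alpha)+\bigl(r'(v_n)-r'(v_\alpha)\bigr)\nabla v_\alpha .
\]
The first term tends to \(0\) in \(L^2\) by Step 1. Along a subsequence \(v_n\to v_\alpha\) a.e., so the second term tends to \(0\) by dominated convergence with dominant \(4|\nabla v_\alpha|^2\). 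The same argument handles \(\nabla(u_n^2)\). Together with Step 2 this yields \(d_X(u_n,u)\to0\), and therefore the continuity of \(U\).

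\textbf{Main obstacle.} The only nonlocal difficulty is the uniform \(L^2\) tail control in Step 2. The \(D^{1,2}\) bound alone gives decay only like \(\rho^{-\la}\), which is not square integrable, so the argument depends essentially on the uniform decay estimate \eqref{eq:sec5_uniform_decay}. That estimate holds for \(\rho\ge e^{T_Z}\) with a constant independent of \(\alpha\in Z\), and it is exactly what controls the tails in Step 2.
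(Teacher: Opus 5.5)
Your proposal is correct and follows the paper's proof essentially step by step. Strong $D^{1,2}$ convergence comes from weak convergence plus the equal norms $NE$, $L^2$ convergence of $r(v_{\alpha_n})$ from local uniform convergence plus the uniform tail bound \eqref{eq:sec5_uniform_tail}, and both gradient terms from the bounded multipliers $r'$ and $2rr'$ with dominated convergence.
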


\begin{proof}
Let \(\alpha_n,\alpha\in Z\) and \(\alpha_n\to\alpha\).
Arguing as in the proof of Proposition~\ref{prop:sec5_Z_closed} and using \eqref{eq:sec5_ground_state_energy}, we obtain that $
v_{\alpha_n}\to v_\alpha$ in $D^{1,2}(\mathbb R^N).$ 
Since
\(\nabla r(v_{\alpha_n})=r'(v_{\alpha_n})\nabla v_{\alpha_n}\) and $r^{\prime}\in (0,1]$, we have
\begin{equation}\label{eq:sec5_solution_map_gradient}
\|\nabla r(v_{\alpha_n})-\nabla r(v_\alpha)\|_2
\le \|\nabla v_{\alpha_n}-\nabla v_\alpha\|_2+
\|\bigl(r'(v_{\alpha_n})-r'(v_\alpha)\bigr)\nabla v_\alpha\|_2
\rightarrow0,
\end{equation}
where the second term converges to zero by
Lemma~\ref{lem:sec5_continuity} and the dominated convergence theorem.

For every \(R>0\), Lemma~\ref{lem:sec5_continuity} yields
\[
\int_{|x|\le R}
\bigl|r(v_{\alpha_n})-r(v_\alpha)\bigr|^2\,dx\to0.
\]
Combining this with \eqref{eq:sec5_uniform_tail}, we obtain
\[
\limsup\limits_{n\to\infty}
\|r(v_{\alpha_n})-r(v_\alpha)\|_2^2
\le
4\sup_{\beta\in Z}\int_{|x|>R}r(v_\beta)^2\,dx
\quad\text{for any }R>0.
\]
Therefore, by \eqref{eq:sec5_uniform_tail},
\begin{equation}\label{eq:sec5_solution_map_L2}
r(v_{\alpha_n})\to r(v_\alpha)\quad\text{in }L^2(\mathbb R^N).
\end{equation}
Combining \eqref{eq:sec5_solution_map_gradient} and
\eqref{eq:sec5_solution_map_L2}, we conclude that $
r(v_{\alpha_n})\to r(v_\alpha)$ in $H^1(\mathbb R^N).$

Since
\[
0\le 2r(t)r'(t)=\frac{2r(t)}{\sqrt{1+2r(t)^2}}\le\sqrt2
\quad\text{for all }t\ge0,
\]
it follows that
\[
\|2r(v_{\alpha_n})r'(v_{\alpha_n})
(\nabla v_{\alpha_n}-\nabla v_\alpha)\|_2\to0.
\]
Furthermore, by Lemma~\ref{lem:sec5_continuity} and the dominated convergence theorem, we obtain 
\[
\|2\bigl(r(v_{\alpha_n})r'(v_{\alpha_n})
-r(v_\alpha)r'(v_\alpha)\bigr)\nabla v_\alpha\|_2\to0.
\]
Consequently, 
\[
\begin{aligned}
& \|\nabla(r(v_{\alpha_n})^2)-\nabla(r(v_\alpha)^2)\|_2 \\ \le    &
\|2r(v_{\alpha_n})r'(v_{\alpha_n})
(\nabla v_{\alpha_n}-\nabla v_\alpha)\|_2+
\|2\bigl(r(v_{\alpha_n})r'(v_{\alpha_n})
-r(v_\alpha)r'(v_\alpha)\bigr)\nabla v_\alpha\|_2
\to 0.
\end{aligned}
\]
Thus $
d_X\bigl(r(v_{\alpha_n}),r(v_\alpha)\bigr)\to0,$
which proves the continuity of \(U\). According to the definition of $M$, \(M\) is continuous.
\end{proof}

\begin{proof}[Proof of Theorem~\ref{thm:1.3} \textup{$(i)$}]
By Lemma~\ref{lem:solution_sets_nonempty_decay}\textup{(i)} and the definition
of \(Z\), we have \(Z\neq\varnothing\). By
Proposition~\ref{prop:sec5_Z_closed}, \(Z\) is compact. By
Proposition~\ref{prop:sec5_solution_map_continuity}, \(U:Z\to X\) is
continuous. Since \(\mathcal S=U(Z)\), the set \(\mathcal S\) is compact in
\(X\). Consequently, \(Y=M(Z)\) is a nonempty compact subset of
\((0,\infty)\).
\end{proof}

\begin{proof}[Proof of Theorem~\ref{thm:1.5}]
By Proposition~\ref{prop:gen_radial_ground_state}, proved in
Appendix~\ref{app:gen-existence}, equation \eqref{eq:1.1} admits a
radially decreasing least energy positive solution. Hence \(Z\neq\varnothing\).
Proposition~\ref{prop:sec5_Z_closed} gives the compactness of \(Z\), and
Proposition~\ref{prop:sec5_solution_map_continuity} gives the continuity of
\(U:Z\to X\). Since \(\mathcal S=U(Z)\), the set \(\mathcal S\) is compact in
\(X\). Consequently, \(Y=M(Z)\) is a nonempty compact subset of
\((0,\infty)\).
\end{proof}

\section{Proof of Theorem~\ref{thm:1.3} \textup{$(ii)$}}\label{sec:finite-mass-radial}

In this section, we prove Theorem~\ref{thm:1.3} \textup{$(ii)$}. Let $v$ be a positive classical radial solution of \eqref{eq:intro_radial_ode}. We introduce the
radial Pohozaev function
\begin{equation}
P_v(\rho)=\rho^N\Bigl(\frac{1}{2}(v^{\prime}(\rho))^2+K(v(\rho))\Bigr)
+\frac{N-2}{2}\rho^{N-1}v(\rho)v'(\rho), \quad \rho>0.
\end{equation}
A direct computation yields
\begin{equation}\label{eq:pohozaev_tail_derivative}
P_v^{\prime}(\rho)=\rho^{N-1}H(v(\rho)),
\quad
H(\xi)=N K(\xi)-\frac{N-2}{2}\,\xi k(\xi).
\end{equation}
According to the Pohozaev identity, if \(v\in H^1(\mathbb R^N)\), then
\begin{equation}\label{eq:pohozaev_identity}
\int_{\mathbb{R}^N}H(v(x))dx=0.
\end{equation}

We first record the tail monotonicity of \(P_v\).

\begin{lemma}\label{lem:pohozaev_tail_monotonicity}
Assume that \textup{(A1)} and \textup{(A4)} hold. Then there exist \(\xi_0>0\) and
\(\theta\in\bigl(0,\frac{N-2}{2N}\bigr)\) such that $
K(\xi)\le \theta\,\xi\,k(\xi)$ for any $\xi\in(0,\xi_0).$
Moreover, if \(v(\rho)\to0\) as \(\rho\to+\infty\), then there exists \(R_0>0\)
such that \(P_v'(\rho)<0\) for any \(\rho\ge R_0\).
\end{lemma}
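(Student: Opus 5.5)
The plan is to transfer the Ambrosetti--Rabinowitz type condition (A4) from \(g\) to \(k\) near zero, using the fact that \(r\) is nearly the identity there. Then I will use the resulting inequality to fix the sign of \(H\) in \eqref{eq:pohozaev_tail_derivative}.

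\emph{Transferring (A4).} Recall that \(K(\xi)=G(r(\xi))\) and \(\xi k(\xi)=\frac{\xi r'(\xi)}{r(\xi)}\, r(\xi)g(r(\xi))\). Since \(r\) is increasing with \(r(0)=0\), there is \(\xi_1>0\) such that \(r(\xi)\in(0,s_1]\) for \(\xi\in(0,\xi_1)\). On this interval (A4) gives
\[
G(r(\xi))\le \frac1\nu\, r(\xi)g(r(\xi))=\frac1\nu\,\frac{r(\xi)}{\xi r'(\xi)}\,\xi k(\xi).
\]
By Lemma~\ref{lem:prelim_r_large_bounds}~\textup{(i)}, \(r(\xi)/\xi\to1\) and \(r'(\xi)\to1\) as \(\xi\to0\), so \(r(\xi)/(\xi r'(\xi))\to1\). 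Since \(\nu>2^*\), we have \(1/\nu<1/2^*=\frac{N-2}{2N}\). Fix any \(\theta\in(1/\nu,\frac{N-2}{2N})\). Shrinking \(\xi_0\le\xi_1\) so that \(\frac{r(\xi)}{\nu\,\xi r'(\xi)}\le\theta\) on \((0,\xi_0)\), we get \(K(\xi)\le\theta\,\xi k(\xi)\) there. Here \(k\ge0\) holds because \(g\ge0\) and \(r'>0\).

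\emph{Sign of \(H\).} For \(\xi\in(0,\xi_0)\) this gives
\[
H(\xi)=NK(\xi)-\frac{N-2}{2}\xi k(\xi)\le \Bigl(N\theta-\frac{N-2}{2}\Bigr)\xi k(\xi).
\]
The coefficient is strictly negative by the choice of \(\theta\). Moreover, \(k(\xi)=r'(\xi)g(r(\xi))>0\) for \(\xi>0\) by (A1). Hence \(H(\xi)<0\) on \((0,\xi_0)\).

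\emph{Conclusion.} The solution \(v\) is positive, and \(v(\rho)\to0\) as \(\rho\to\infty\). So there is \(R_0>0\) with \(0<v(\rho)<\xi_0\) for \(\rho\ge R_0\). Then \eqref{eq:pohozaev_tail_derivative} gives \(P_v'(\rho)=\rho^{N-1}H(v(\rho))<0\) for \(\rho\ge R_0\).

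No step is a real obstacle. The only point needing care is keeping the inequality strict, which requires choosing \(\theta\) strictly between \(1/\nu\) and \(1/2^*\) and using positivity of \(g\) from (A1).
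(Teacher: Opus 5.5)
Your proof is correct and follows the paper's argument essentially step for step. You transfer \textup{(A4)} to $k$ via $K(\xi)=G(r(\xi))$ and $r(\xi)/(\xi r'(\xi))\to 1$, deduce $H<0$ on $(0,\xi_0)$, and conclude from $P_v'(\rho)=\rho^{N-1}H(v(\rho))$. Your explicit choice $\theta\in(1/\nu,\frac{N-2}{2N})$, and your remark that $k(\xi)>0$ by \textup{(A1)} is what makes the inequality strict, simply spell out details the paper leaves implicit.
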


\begin{proof}
By Lemma~\ref{lem:prelim_r_large_bounds} \textup{$(i)$},
\(\frac{r(\xi)}{\xi r'(\xi)}\to1\) as \(\xi\to0^+\). Hence, there exists $\xi_0>0$ and \(\theta\in(0,\frac{N-2}{2N})\) such that \(r(\xi)\in(0,s_1]\) and $
\frac1\nu\,\frac{r(\xi)}{\xi r'(\xi)}\le\theta$ for $\xi\in(0,\xi_0),$
where \(\nu\) is the constant in \textup{(A4)}. It follows from \textup{(A4)}
that, for \(\xi\in(0,\xi_0)\),
\[
K(\xi)=G(r(\xi))
\le
\frac1\nu\,r(\xi)\,g(r(\xi))
=
\frac1\nu\,\frac{r(\xi)}{\xi\,r'(\xi)}\,\xi\,k(\xi)
\le
\theta\,\xi\,k(\xi).
\]
Therefore,
\[
H(\xi)
=
N K(\xi)-\frac{N-2}{2}\,\xi\,k(\xi)
\le
\left(N\theta-\frac{N-2}{2}\right)\xi\,k(\xi)<0
\quad\text{for any }\xi\in(0,\xi_0).
\]
If \(v(\rho)\to0\) as $\rho\rightarrow +\infty$, then there exists $R_0>0$ such that \(0<v(\rho)<\xi_0\) for any $\rho\ge R_0$. Hence, by \eqref{eq:pohozaev_tail_derivative}, $P_v^{\prime}(\rho)<0$ for $\rho>R_0$. This completes the proof.
\end{proof}

\begin{lemma}\label{lem:tail_bootstrap}
Let \(N\ge5\). Assume that \textup{(A1)}, \textup{(A2)}, and \textup{(A4)} hold. If $
\lim\limits_{\rho\to\infty}P_v(\rho)\ge0,$ and $ 
v(\rho)\to0$ as $\rho\to+\infty,$ then \(v\in H^1(\R^N)\).
\end{lemma}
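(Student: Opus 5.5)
The plan is to pass to the Emden--Fowler variables of Section~\ref{sec:mass-geometry}, $w(t)=e^{\la t}v(e^t)$ with $\la=\frac{N-2}{2}$. In these variables the Pohozaev function becomes an energy for a Hamiltonian-type ODE:
\[
P_v(e^t)=\tfrac12\bigl(w'(t)^2-\la^2w(t)^2\bigr)+e^{Nt}K\bigl(e^{-\la t}w(t)\bigr).
\]
The target is to show that $w$ stays bounded as $t\to\infty$. Once $w$ is bounded, the argument of Lemma~\ref{lem:sec5_decay_implies_local_stable} places $\mathbf z(T)=P^{-1}(w,w')(T)$ in $W_T^s$ for $T$ large, and \eqref{eq:sec5_local_stable_decay} then gives $v(\rho)\le C\rho^{-(N-2)+\varepsilon}$. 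Using $\rho^{N-1}|v'(\rho)|=\int_0^\rho s^{N-1}k(v)\,ds$ together with $k(v)\le Cv^{m-1}$ from Lemma~\ref{lem:prelim_k_local_bounds}, we also get $|v'(\rho)|\le C\rho^{1-N}$. Since $N\ge5$, both $v$ and $v'$ are then square integrable at infinity, so $v\in H^1(\R^N)$.

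\textbf{Step 1 (sign information).} By Lemma~\ref{lem:pohozaev_tail_monotonicity}, $P_v$ is decreasing on $[R_0,\infty)$, so the limit $L=\lim_{\rho\to\infty}P_v(\rho)$ exists and satisfies $0\le L\le P_v(\rho)\le P_v(R_0)$ for $\rho\ge R_0$. Since $v\to0$, Lemma~\ref{lem:prelim_k_local_bounds} and the bound $K(\xi)\le\theta\xi k(\xi)$ give
\[
0\le e^{Nt}K(e^{-\la t}w)\le C\,e^{-\delta_0 t}w^{m}
\]
with $\delta_0=\la m-N=\la(m-1)-\frac{N+2}{2}>0$ as in Lemma~\ref{lem:sec5_smallness}. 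Hence, as long as $e^{-\delta_0t}w^{m-2}$ is small, we have $\frac12(w'^2-\la^2w^2)\ge -o(1)\,w^2$. This means the trajectory stays in the region $|w'|\ge(\la-o(1))w$.

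\textbf{Step 2 (excluding growth of $w$).} This is the main obstacle. Radial monotonicity gives $w'=\la w+e^{(\la+1)t}v'(e^t)\le\la w$, so the admissible branch of Step~1 is $w'\ge(\la-o(1))w$ (slowly decaying, $w$ growing) or $w'\le-(\la-o(1))w$ (fast decaying, $w$ decaying). I need to rule out the growing branch. On that branch $w$ grows at most like $e^{\la t}$, so $v$ is roughly bounded by a constant, which is not enough by itself.

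\emph{Crude decay bound.} First I would obtain a crude a priori decay of $v$ from radial superharmonicity. Since $\rho^{N-1}|v'|$ is nondecreasing,
\[
v(\rho/2)-v(\rho)\ge c\rho^{2-N}\int_{B_{\rho/2}}k(v)\,dx .
\]
Combined with $v\to0$, this should force $\rho^2\,\min_{[\rho/2,\rho]}k(v)/v\to0$ along the tail.

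\emph{Bootstrap.} Next I would bootstrap this into $e^{-\delta_0 t}w^{m-2}\to0$, i.e.\ $\rho^{2}v^{m-2}\to0$. In that regime the energy identity shows that $w'^2-\la^2w^2$ converges to $2L\ge0$. The remaining point is that $w$ cannot keep growing. If $w\to\infty$ along the growing branch, then $v\approx c\rho^{-a}$ with $a<N-2$, and I would try to show that then $\int_{B_\rho}H(v)\,dx$ (note $P_v'=\rho^{N-1}H(v)$) is eventually so negative that $P_v$ drops below $0$. Concretely, I would compare $-\int_{R_0}^\rho s^{N-1}H(v)\,ds\ge c\int s^{N-1}v\,k(v)\,ds$ (using $H\le -c\,\xi k(\xi)$ from Lemma~\ref{lem:pohozaev_tail_monotonicity}) with the nonnegative quantity $P_v(R_0)-L$. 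The goal is to make this integral diverge on the slowly decaying branch, using $|v'|\rho^{N-1}\to\infty$ there. If this direct route fails, I would fall back on an ODE phase-plane argument: the unstable direction $x=(w+w'/\la)/2$ grows like $e^{\la t}$ unless it is on $W^s_T$, and a growing $x$ contradicts $w'\le \la w$ together with $w'^2-\la^2w^2\to 2L$ bounded.

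\textbf{Step 3 (conclusion).} Once $w$ is bounded, apply Lemma~\ref{lem:sec5_local_stable} and the argument of Lemma~\ref{lem:sec5_decay_implies_local_stable} to a single solution; uniformity in $\alpha$ is not needed here. This yields the decay $v\le C\rho^{-(N-2)+\varepsilon}$ with $\varepsilon<\frac{N-4}{2}$, and the corresponding bound on $v'$, hence $v\in H^1(\R^N)$.
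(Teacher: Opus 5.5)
There is a genuine gap in Step~2, and that step carries all the content of the lemma.

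\textbf{Why Step~2 fails.} Your ``crude decay bound'' never uses the hypothesis on $P_v$, and it is false. Radial superharmonicity together with $v\to0$ gives at best $\rho^2\min_{[\rho/4,\rho/2]}k(v)\le C\,v(\rho/2)$, which is a bounded ratio, not one tending to $0$. For a concrete counterexample, take $k(t)=t^{m-1}$; this is compatible with (A1), (A2), (A4) for a suitable $g$. Every regular solution of this supercritical Lane--Emden equation satisfies $\rho^{2/(m-2)}v(\rho)\to c_*>0$. So $v\to0$ while $e^{-\delta_0 t}w^{m-2}=\rho^2v^{m-2}\to c_*^{m-2}\neq0$.

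These are exactly the solutions the lemma must exclude: here $H<0$ on $(0,\infty)$, so $\lim P_v<0$. Hence no argument that ignores $P_v$ can give $\rho^2v^{m-2}\to0$. Without that, you have neither $w'^2-\lambda^2w^2\to2L$ nor the dichotomy of Step~1.

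The phase-plane fallback is circular for the same reason. Lemma~\ref{lem:sec5_local_stable} applies only once $|\mathbf z|<\rho_0$ at some large time. The nonlinearity is a small perturbation of the hyperbolic linear part precisely when $e^{-\delta_0 t}|\mathbf z|^{m-2}\approx\rho^2v^{m-2}$ is small.

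Your ``direct route'' contains the right ingredient, namely
$\int_{R_0}^\infty s^{N-1}v\,k(v)\,ds\le C\bigl(P_v(R_0)-L\bigr)<\infty$.
But you use it in a contradiction argument that rests on an unproved asymptotic $v\approx c\rho^{-a}$. You also key it to $\rho^{N-1}|v'|\to\infty$, which alone does not make the integral diverge, since the weight $v$ tends to $0$.

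\textbf{What is missing.} You need a way to use $P_v>0$ that does not require the nonlinear term to be small. The paper does this as follows. With $y=-\rho^{N-1}v'$ and $\mathcal L_v=-\rho v'/v$, it writes $P_v=vy\bigl(\tfrac12\mathcal L_v-\tfrac{N-2}{2}\bigr)+\rho^NK(v)$. It then absorbs $\rho^NK(v)\le\theta\rho\,v\,y'$, using $K(\xi)\le\theta\xi k(\xi)$ and $k(v)=\rho^{1-N}y'$. This yields the Riccati-type inequality
$\mathcal L_v'>\frac{\mu_\theta}{\rho}\mathcal L_v\bigl(N-2-\mathcal L_v\bigr)$.
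Hence $\liminf\mathcal L_v\ge N-2$ and $v\le C\rho^{-(N-2-\varepsilon)}$. Then $y'\le C\rho^{N-1}v^{m-1}$ is integrable, so $y$ is bounded, giving $v\le C\rho^{2-N}$ and $|v'|\le C\rho^{1-N}$.

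Alternatively, your direct route closes without any branch analysis:
\begin{itemize}
\item Since $v$ is decreasing, $v(\rho)\bigl(y(\rho)-y(R_0)\bigr)\le\int_{R_0}^\rho s^{N-1}v\,k(v)\,ds\le C$.
\item Therefore $-\rho^{N-1}(v^2)'=2vy\le C$.
\item Integrating from $\rho$ to $\infty$ and using $v\to0$ gives $v\le C\rho^{-(N-2)/2}$, i.e.\ $w$ is bounded.
\item Then $\rho^2v^{m-2}\to0$ because $m>2^*$, and iterating $y'\le C\rho^{N-1}v^{m-1}$ finishes.
\end{itemize}

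\textbf{Step~3.} Invoking Lemma~\ref{lem:sec5_decay_implies_local_stable} requires $|\mathbf z(T)|<\rho_0/2$ for some large $T$, together with the variation-of-constants representation. Boundedness of $w$ alone is not enough. So that step also needs an extra argument, or should be replaced by the bootstrap above.
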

\begin{proof}
Define 
\begin{equation}\label{eq:y-and-logder}
y(\rho)=-\rho^{N-1}v'(\rho)>0 \quad \text{and} \quad
\mathcal L_v(\rho)=\frac{-\rho v'(\rho)}{v(\rho)}=
\frac{y(\rho)}{\rho^{N-2}v(\rho)}.   
\end{equation}

Since \((\rho^{N-1}v'(\rho))'=-\rho^{N-1}k(v(\rho))\) and \(v^{\prime}(\rho)<0\), we have
\begin{equation}\label{y_monotonicity}
y^{\prime}(\rho)>0 \quad \text{and} \quad y(\rho)>0.
\end{equation}
By Lemma~\ref{lem:pohozaev_tail_monotonicity} and the assumption
\(\lim\limits_{\rho\rightarrow\infty}P_v(\rho)\ge0\), we deduce that
\(P_v(\rho)>0\) for any \(\rho\ge R_0\).
By the definitions of \(y\) and \(\mathcal L_v\), 
\begin{equation}\label{eq:tail_bootstrap_P_rewrite_new}
P_v(\rho)
=
v(\rho)y(\rho)
\Bigl(\frac12 \mathcal L_v(\rho)-\frac{N-2}{2}\Bigr)
+
\rho^N K(v(\rho)).
\end{equation}
Since \(P_v(\rho)>0\) and $$
K(v(\rho))\le \theta\,v(\rho)\,k(v(\rho))
=
\theta\,v(\rho)\,\frac{y'(\rho)}{\rho^{N-1}},$$ 
it follows from \eqref{eq:tail_bootstrap_P_rewrite_new} that
\[
0<
v(\rho)y(\rho)
\Bigl(\frac12 \mathcal L_v(\rho)-\frac{N-2}{2}\Bigr)
+
\theta\,\rho\,v(\rho)\,y'(\rho).
\]
Dividing by \(v(\rho)y(\rho)>0\), we get
\begin{equation}\label{eq:tail_bootstrap_basic_ineq_new}
0<
\frac12 \mathcal L_v(\rho)-\frac{N-2}{2}
+
\theta\,\frac{\rho y'(\rho)}{y(\rho)}
\quad\text{for any }\rho\ge R_0.
\end{equation}
Noting that \(y(\rho)=\rho^{N-2}v(\rho)\mathcal L_v(\rho)\), we have
\[
\frac{\rho y'(\rho)}{y(\rho)}
=
N-2+\frac{\rho v'(\rho)}{v(\rho)}
+\frac{\rho \mathcal L_v'(\rho)}{\mathcal L_v(\rho)}
=
N-2-\mathcal L_v(\rho)
+\frac{\rho \mathcal L_v'(\rho)}{\mathcal L_v(\rho)}.
\]
Substituting this into \eqref{eq:tail_bootstrap_basic_ineq_new}, we infer that
\[
0<
\Bigl(\frac12-\theta\Bigr)\bigl(\mathcal L_v(\rho)-(N-2)\bigr)
+
\theta\,\frac{\rho \mathcal L_v'(\rho)}{\mathcal L_v(\rho)}.
\]
Equivalently,
\begin{equation}\label{eq:tail_bootstrap_logder_ineq}
\mathcal L_v'(\rho)>
\frac{\mu_\theta}{\rho}\,\mathcal L_v(\rho)
\bigl((N-2)-\mathcal L_v(\rho)\bigr)
\quad\text{for any }\rho\ge R_0,
\end{equation}
where \(\mu_\theta=\frac{\frac12-\theta}{\theta}>0\).

We claim that
\begin{equation}\label{eq:tail_bootstrap_liminf_new}
\liminf\limits_{\rho\rightarrow\infty} \mathcal L_v(\rho)\ge N-2.
\end{equation}
Assume by contradiction that there exist $\varepsilon$ and $\rho_n \rightarrow +\infty$ such that $\mathcal L_v(\rho_n)\le N-2-\varepsilon$. By \eqref{eq:tail_bootstrap_logder_ineq}, there exists $R_1\ge R_0$ such that $
\mathcal L_v(\rho)\le N-2-\varepsilon$ for any $\rho\ge R_1.$ Then \eqref{eq:tail_bootstrap_logder_ineq} gives
$
\mathcal L_v'(\rho)\ge \frac{\mu_\theta \varepsilon}{\rho}\mathcal L_v(\rho)$ for any $\rho\ge R_1.$
Since \(\mathcal L_v(\rho)>0\), integrating yields $
\mathcal L_v(\rho)\ge \mathcal L_v(R_1)\Bigl(\frac{\rho}{R_1}\Bigr)^{\mu_\theta\varepsilon}$ for any $\rho\ge R_1.$
This contradicts \(\mathcal L_v(\rho)\le N-2-\varepsilon\). Thus we prove \eqref{eq:tail_bootstrap_liminf_new}.

By \eqref{eq:tail_bootstrap_liminf_new} for every $\varepsilon\in(0,N-2)$ there exists $R_\varepsilon>0$ such that, for $\rho\ge R_\varepsilon$,
$$
\frac{-v'(\rho)}{v(\rho)}=\frac{\mathcal L_v(\rho)}{\rho}\ge \frac{N-2-\varepsilon}{\rho}.
$$ 
Integrating over $[R_\varepsilon,\rho]$, we obtain
\begin{equation}\label{eq:tail_bootstrap_predecay_new}
v(\rho)\le C \rho^{-(N-2-\varepsilon)}
\quad\text{for }\rho\ge R_\varepsilon.
\end{equation}
By Lemma~\ref{lem:prelim_k_local_bounds}, there exists \(R_\varepsilon'>R_\varepsilon\) such that $
k(v(\rho))\le C\,v(\rho)^{m-1}$ for $\rho\ge R_\varepsilon'.$
Using \eqref{eq:tail_bootstrap_predecay_new}, we infer that
\[
y'(\rho)=\rho^{N-1}k(v(\rho))
\le
C\,\rho^{N-1-(m-1)(N-2-\varepsilon)}
\quad\text{for }\rho\ge R_\varepsilon'.
\] Choose \(\varepsilon>0\) so small that
$
(m-1)(N-2-\varepsilon)>N.$ Then,
\[
\int_{R_\varepsilon'}^\infty
\rho^{N-1-(m-1)(N-2-\varepsilon)}\,d\rho<\infty,
\]
which means \(y'\in L^1(R_\varepsilon',\infty)\). By \eqref{y_monotonicity}, it follows that $y(\rho)$ is bounded above. Therefore
\begin{equation}\label{eq:v'_decay}
-v'(\rho)=\frac{y(\rho)}{\rho^{N-1}}\le C\rho^{1-N}.   
\end{equation}
Integrating from \(\rho\) to \(+\infty\), and using \(v(\rho)\rightarrow0\) as $\rho \rightarrow \infty$, we deduce that 
\begin{equation}\label{eq:v_decay}
 v(\rho)\le C\rho^{2-N}.   
\end{equation}
Finally,
\[
\int_{R_0}^{\infty}|v'(\rho)|^2\rho^{N-1}\,d\rho
\le C\int_{R_0}^{\infty}\rho^{-N+1}\,d\rho<\infty,
\]
and, since $N\ge 5$,
\[
\int_{R_0}^{\infty}v(\rho)^2\rho^{N-1}\,d\rho
\le C\int_{R_0}^{\infty}\rho^{-N+3}\,d\rho<\infty.
\]
Therefore $v\in H^1(\mathbb{R}^N)$.
\end{proof}

We next establish a positive lower bound for \(\widetilde Z\).

\begin{lemma}\label{lem:Z_tilde_positive_lower}
Assume that \textup{(A1)}--\textup{(A2)} and
\textup{(A4)}--\textup{(A7)} hold. Then \(\inf\widetilde Z\ge \xi_0\), where
\(\xi_0\) is the constant given by Lemma~\ref{lem:pohozaev_tail_monotonicity}.
\end{lemma}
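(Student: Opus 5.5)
We argue by contradiction, using the Pohozaev identity \eqref{eq:pohozaev_identity} together with the sign information on \(H\) near zero from Lemma~\ref{lem:pohozaev_tail_monotonicity}. Let \(\alpha\in\widetilde Z\) and set \(v=v_\alpha\). By Remark~\ref{rem:intro_radial_monotonicity}, \(v\) is strictly decreasing on \((0,\infty)\). Hence
\[
0<v(\rho)<v(0)=\alpha\quad\text{for all }\rho>0.
\]
Suppose, contrary to the claim, that \(\alpha<\xi_0\). Then \(v(x)\in(0,\xi_0)\) for every \(x\in\R^N\).

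\textbf{Step 1: sign of \(H\).} The proof of Lemma~\ref{lem:pohozaev_tail_monotonicity} gives
\[
H(\xi)\le\Bigl(N\theta-\frac{N-2}{2}\Bigr)\xi k(\xi)<0\quad\text{for }\xi\in(0,\xi_0).
\]
Strict negativity uses \(k(\xi)=r'(\xi)g(r(\xi))>0\) for \(\xi>0\), which follows from (A1). Therefore \(H(v(x))<0\) pointwise on \(\R^N\).

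\textbf{Step 2: Pohozaev identity.} Since \(v\in H^1(\R^N)\) is a positive radial solution, \eqref{eq:pohozaev_identity} gives \(\int_{\R^N}H(v)\,dx=0\). Step 1 makes the integrand strictly negative everywhere, so the integral is strictly negative, a contradiction. Hence \(\alpha\ge\xi_0\), and taking the infimum yields \(\inf\widetilde Z\ge\xi_0\).

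\textbf{Justifying \eqref{eq:pohozaev_identity}.} This is the only delicate point. It requires \(H(v)\in L^1\) and the vanishing of boundary terms. A self-contained route is to integrate \(P_v'(\rho)=\rho^{N-1}H(v(\rho))\) over \((0,\rho)\). Since \(P_v(0^+)=0\), we get
\[
P_v(\rho)=\int_0^\rho s^{N-1}H(v(s))\,ds<0,
\]
and this quantity is decreasing in \(\rho\). So it suffices to show \(\liminf_{\rho\to\infty}P_v(\rho)\ge0\), or more precisely that \(P_v(\rho_n)\to0\) along some sequence \(\rho_n\to\infty\).

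To get this, note that \(v\in H^1\) gives \(\int_0^\infty\rho^{N-1}(v'^2+v^2)\,d\rho<\infty\). Moreover \(K(v)\le Cv^m\le Cv^2\) for small \(v\), by Lemma~\ref{lem:prelim_k_local_bounds}. Hence
\[
\int_0^\infty \rho^{-1}\,\rho^N\bigl(v'^2+K(v)+v|v'|\bigr)\,d\rho<\infty,
\]
where the \(v|v'|\) term is handled by Cauchy--Schwarz. Since \(\int_0^\infty \rho^{-1}\,d\rho=\infty\), finiteness of this integral forces
\[
\liminf_{\rho\to\infty}\rho^N\bigl(v'^2+K(v)+\rho^{-1}v|v'|\bigr)=0.
\]
This gives \(P_v(\rho_n)\to0\) along a sequence \(\rho_n\to\infty\). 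But \(P_v\) is negative and decreasing, so \(P_v(\rho_n)\le P_v(1)<0\) for all large \(n\), which contradicts \(P_v(\rho_n)\to0\). This completes the argument.
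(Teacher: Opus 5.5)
Your proof is correct and follows the paper's argument. If $\alpha<\xi_0$, then $0<v_\alpha<\xi_0$ everywhere, so Lemma~\ref{lem:pohozaev_tail_monotonicity} gives $H(v_\alpha)<0$ pointwise, which contradicts the Pohozaev identity $\int_{\R^N}H(v_\alpha)\,dx=0$. Your extra self-contained check of that identity, showing $P_v(\rho_n)\to0$ along some sequence $\rho_n\to\infty$ from $v\in H^1(\R^N)$ and $K(v)\le Cv^2$, is valid and fills in a step the paper takes for granted.
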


\begin{proof}
Suppose by contradiction that there exists \(\alpha\in\widetilde Z\) such
that \(0<\alpha<\xi_0\). Since \(v_\alpha\) is decreasing,
\(0<v_\alpha(\rho)<\xi_0\) for any \(\rho>0\). By
Lemma~\ref{lem:pohozaev_tail_monotonicity}, we have $H(v_\alpha(\rho))<0$ for $\rho>0$.
This contradicts \(\int_{\mathbb R^N}H(v_\alpha)\,dx=0\). Therefore,
\(\inf\widetilde Z\ge \xi_0>0\).
\end{proof}

\begin{proposition}
\label{prop:Z_tilde_closed}
Assume that \textup{(A1)}--\textup{(A2)} and
\textup{(A4)}--\textup{(A7)} hold. Then \(\widetilde Z\) is compact in
\((0,\infty)\).
\end{proposition}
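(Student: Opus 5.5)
By Proposition~\ref{prop:Ztilde_bounded_general} (which uses only (A1), (A2), (A5)--(A7)), $\widetilde Z$ is bounded above. By Lemma~\ref{lem:Z_tilde_positive_lower}, $\inf\widetilde Z\ge\xi_0>0$. So $\widetilde Z\subset[\xi_0,A]$ for some $A$, and it remains to show that $\widetilde Z$ is closed.

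\emph{Setup.} Let $\alpha_n\in\widetilde Z$ with $\alpha_n\to\alpha_*$, and set $v_n=v_{\alpha_n}$ and $v_*=v_{\alpha_*}$. By Lemma~\ref{lem:sec5_continuity}, $v_n\to v_*$ in $C^1([0,R])$ for every $R>0$, as long as $v_*$ stays where the ODE is defined; since $g$ is extended by zero, the ODE makes sense for all values of $v$. Two facts about each $v_n$ will be used. First, $v_n$ is positive and strictly decreasing (Remark~\ref{rem:intro_radial_monotonicity}), and $v_n\to0$ because $v_n\in H^1$. Second, by \eqref{eq:pohozaev_tail_derivative} and the Pohozaev identity \eqref{eq:pohozaev_identity}, $P_{v_n}(\rho)\to0$ as $\rho\to\infty$, after checking that the boundary terms vanish using the $H^1$ decay. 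Hence
\[
P_{v_n}(\rho)=-\int_\rho^\infty s^{N-1}H(v_n(s))\,ds>0
\]
whenever $v_n(\rho)<\xi_0$, because $H<0$ on $(0,\xi_0)$ by Lemma~\ref{lem:pohozaev_tail_monotonicity}.

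\emph{Step 1: $v_*$ is positive and tends to $0$.} This is the main obstacle. As long as $v_*>0$, the identity $\rho^{N-1}v_*'=-\int_0^\rho s^{N-1}k(v_*)\,ds$ shows that $v_*$ is strictly decreasing. If $v_*\to L>0$, then $k(L)>0$ forces $\rho^{N-1}|v_*'|\gtrsim\rho^N$, so $v_*\to-\infty$, which is a contradiction.

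So suppose instead that $v_*$ reaches $0$ at a first point $\rho_0$. Then $v_*'(\rho_0)<0$: if $v_*'(\rho_0)=0$, uniqueness for the ODE with $k(0)=0$ would give $v_*\equiv0$. Exclude this case as follows.
\begin{enumerate}
\item Pick $\rho_1<\rho_0$ close to $\rho_0$. Since $y_n=-\rho^{N-1}v_n'$ is increasing, for $\rho\ge\rho_1$ we get
\[
-v_n'(\rho)\ge \rho_1^{N-1}|v_n'(\rho_1)|\,\rho^{1-N}.
\]
\item Integrating, and using that $v_n$ stays positive,
\[
v_n(\rho_1)\ge \frac{\rho_1^{N-1}|v_n'(\rho_1)|}{N-2}\,\rho_1^{2-N}=\frac{\rho_1|v_n'(\rho_1)|}{N-2}.
\]
\item Letting $n\to\infty$ gives $v_*(\rho_1)\ge\rho_1|v_*'(\rho_1)|/(N-2)$.
\item On the other hand, $v_*(\rho_1)\approx|v_*'(\rho_0)|(\rho_0-\rho_1)$, which is much smaller than $\rho_1|v_*'(\rho_0)|/(N-2)$ once $\rho_1$ is close to $\rho_0$. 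This is a contradiction.
\end{enumerate}
Hence $v_*>0$ on $[0,\infty)$, $v_*$ is strictly decreasing, and $v_*\to0$.

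\emph{Step 2: $\lim_{\rho\to\infty}P_{v_*}(\rho)\ge0$.} By Lemma~\ref{lem:pohozaev_tail_monotonicity}, $P_{v_*}$ is decreasing on $[R_0,\infty)$, so its limit exists in $[-\infty,\infty)$. Fix any $\rho$ large enough that $v_*(\rho)<\xi_0/2$. Local $C^1$ convergence gives $v_n(\rho)<\xi_0$ for $n$ large, so $P_{v_n}(\rho)>0$ by the setup. Since $P_{v_n}(\rho)\to P_{v_*}(\rho)$ by the $C^1$ convergence, we get $P_{v_*}(\rho)\ge0$ for all large $\rho$. Therefore the limit is $\ge0$.

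\emph{Conclusion.} Steps 1 and 2 verify the hypotheses of Lemma~\ref{lem:tail_bootstrap}, which yields $v_*\in H^1(\R^N)$. Thus $v_*$ is a positive radial $H^1$ solution of \eqref{eq:intro_dual}, so $\alpha_*\in\widetilde Z$. Hence $\widetilde Z$ is closed, and being contained in $[\xi_0,A]$, it is compact.
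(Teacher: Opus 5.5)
Your proposal is correct and follows the paper's proof. Both arguments bound $\widetilde Z$ between $\xi_0$ and a finite constant, then prove closedness via $C^1_{\mathrm{loc}}$ convergence, $v_*\to0$, transfer of the sign of $P_{v_n}(\rho)$ (positive once $v_n(\rho)<\xi_0$) to $P_{v_*}$, and Lemma~\ref{lem:tail_bootstrap}. The paper only phrases your Step~2 by contradiction ($L_P<0$) and gets $v_*>0$ from the strong maximum principle rather than your ODE comparison. Your items 1--4 are valid but unnecessary: $v_*\ge0$ as a pointwise limit of the positive $v_n$, so a first zero $\rho_0>0$ is a minimum, hence $v_*'(\rho_0)=0$, and uniqueness already gives $v_*\equiv0$.
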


\begin{proof}
Let \(\{\alpha_n\}\subset\widetilde Z\) satisfy \(\alpha_n\to\alpha_*>0\), and
set \(v_n=v_{\alpha_n}\). Let \(v_*=v_{\alpha_*}\) be the unique classical
radial solution of \eqref{eq:intro_radial_ode} with initial value
\(v(0)=\alpha_*\) and \(v'(0)=0\). By Lemma~\ref{lem:sec5_continuity}, $
v_n\to v_*$ in $C^1([0,R])$ for any $R>0.$
Since each \(v_n\) is positive and radially decreasing, \(v_*\) is
nonnegative and radially decreasing on \([0,\infty)\). By
the strong maximum principle, \(v_*(\rho)>0\) for \(\rho\ge0\).

Set \(\ell_*=\lim\limits_{\rho\to\infty}v_*(\rho)\ge0\). We first prove that
\(\ell_*=0\). Assume by contradiction that \(\ell_*>0\). Then there exists $\tilde{\rho}$ such that \(v_*(\rho)\ge \ell_*/2\) for \(\rho\ge \tilde{\rho}\). Since \(k\in C^1([0,\infty))\) and $k(s)>0$ on \((0,\infty)\), $
c_0=\min_{s\in[\ell_*/2,\alpha_*]}k(s)>0$ is well defined. Therefore, \(k(v_*(\rho))\ge c_0\) for \(\rho\ge \tilde{\rho}\). Since
\[
(\rho^{N-1}v_*'(\rho))'=-\rho^{N-1}k(v_*(\rho))\le -c_0\rho^{N-1}
\]
for $\rho\ge \tilde{\rho}$, 
integrating over \([\tilde{\rho},\rho]\) yields
\[
\rho^{N-1}v_*'(\rho)-\tilde{\rho}^{N-1}v_*'(\tilde{\rho})
\le
-\frac{c_0}{N}\bigl(\rho^N-\tilde{\rho}^N\bigr).
\]
Since \(v_*\) is decreasing, \(v_*'(\tilde{\rho})\le0\), and thus
\[
v_*'(\rho)\le -\frac{c_0}{N}\left(\rho-\frac{\tilde{\rho}^N}{\rho^{N-1}}\right).
\]
Hence there exists \(\rho_1\ge\tilde{\rho}\) such that $
v_*'(\rho)\le -\frac{c_0}{2N}\rho$ for any $\rho\ge\rho_1.$
Integrating once more over \([\rho_1,\rho]\), we obtain
\[
v_*(\rho)\le v_*(\rho_1)-\frac{c_0}{4N}\bigl(\rho^2-\rho_1^2\bigr),
\]
which tends to \(-\infty\) as \(\rho\to\infty\), a contradiction. Therefore
\(\lim\limits_{\rho\to\infty}v_*(\rho)=0\).

Assume by contradiction that \(\alpha_*\notin\widetilde Z\). Since
\(v_*=v_{\alpha_*}\) is a positive classical solution with
\(v_*(\rho)\to0\), this means that \(v_*\notin H^1(\R^N)\). By
Lemma~\ref{lem:pohozaev_tail_monotonicity}, there exists \(R_1>0\) such that $
P_{v_*}'(\rho)<0$ for any $\rho\ge R_1.$
Hence the limit $
L_P=\lim\limits_{\rho\rightarrow\infty}P_{v_*}(\rho)$
exists in \([-\infty,\infty)\).
If \(L_P\ge0\), then Lemma~\ref{lem:tail_bootstrap} yields
\(v_*\in H^1(\R^N)\), which is impossible. Therefore \(L_P<0\).
Let \(\xi_0>0\) be the constant given by Lemma~\ref{lem:pohozaev_tail_monotonicity}.
Choose \(\rho_*>R_1\) and \(\eta>0\) such that
\begin{equation}
P_{v_*}(\rho_*)\le -2\eta<0,
\quad
v_*(\rho_*)<\frac{\xi_0}{2}.
\end{equation}
Since \(v_n\rightarrow v_*\) in \(C^1([0,\rho_*])\), we have
\[
P_{v_n}(\rho_*)\rightarrow P_{v_*}(\rho_*),
\quad  
v_n(\rho_*)\rightarrow v_*(\rho_*).
\]
Hence, for sufficiently large \(n\),
\begin{equation}\label{pohozaev_tail_negative}
P_{v_n}(\rho_*)\le -\eta<0,
\quad
v_n(\rho_*)<\xi_0.  
\end{equation}
Since \(v_n\) is decreasing, \(v_n(\rho)\le v_n(\rho_*)<\xi_0\) for any
\(\rho\ge\rho_*\). Therefore Lemma~\ref{lem:pohozaev_tail_monotonicity} yields
\(P_{v_n}'(\rho)<0\) for \(\rho\ge\rho_*\). On the other hand, since
\(v_n\in H^1(\R^N)\), integrating \eqref{eq:pohozaev_tail_derivative} from
\(0\) to \(\rho\) and using \eqref{eq:pohozaev_identity} yields $
\lim\limits_{\rho\to\infty}P_{v_n}(\rho)=0.$ 
But \eqref{pohozaev_tail_negative} and the strict decrease of \(P_{v_n}\) on \([\rho_*,\infty)\) imply that
\[
P_{v_n}(\rho)\le -\eta
\quad\text{for any }\rho\ge\rho_*,
\]
which is impossible. Thus \(v_*\in H^1(\R^N)\) and
\(\alpha_*\in\widetilde Z\). Hence \(\widetilde Z\) is closed in
\((0,\infty)\). Combining Proposition~\ref{prop:Ztilde_bounded_general} and
Lemma~\ref{lem:Z_tilde_positive_lower}, we conclude that \(\widetilde Z\) is
compact in \((0,\infty)\).
\end{proof}

\begin{proposition}\label{prop:compact_family_tail_mass}
Assume that \textup{(A1)}--\textup{(A2)} and
\textup{(A4)}--\textup{(A7)} hold, and set \(u_\alpha=r(v_\alpha)\). Then
there exist constants \(R_*>0\) and \(C_*>0\), independent of
\(\alpha\in\widetilde Z\), such that
\begin{equation}\label{eq:compact_family_uniform_tail_v}
v_\alpha(\rho)\le C_* \rho^{2-N},
\quad
u_\alpha(\rho)\le C_* \rho^{2-N}
\quad\text{for }\rho\ge R_*,\ \alpha\in\widetilde Z.
\end{equation}
Consequently,
\begin{equation}\label{eq:compact_family_uniform_tail_u}
\sup_{\alpha\in\widetilde Z}\int_{|x|>R}u_\alpha(x)^2\,dx\rightarrow0
\quad\text{as }R\rightarrow\infty.
\end{equation}
Moreover, the solution map \(\widetilde U\) defined in
\eqref{eq:intro_solution_maps} is continuous. Consequently, the mass map
\(\widetilde M\) defined in \eqref{eq:intro_mass_maps} is continuous.
\end{proposition}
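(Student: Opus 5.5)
The plan is to obtain uniform decay by making the bootstrap of Lemma~\ref{lem:tail_bootstrap} uniform over the compact set \(\widetilde Z\). This will use the compactness from Proposition~\ref{prop:Z_tilde_closed} together with a compactness/covering argument like that of Lemma~\ref{lem:sec5_uniform_entry}.

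\textbf{Step 1: uniform entry into the tail region.} For each \(\alpha\in\widetilde Z\), \(v_\alpha\in H^1\) is decreasing to \(0\), so I can pick \(\rho_\alpha\ge R_0\) with \(v_\alpha(\rho_\alpha)<\xi_0/2\). By Lemma~\ref{lem:sec5_continuity}, the bound \(v_\beta(\rho_\alpha)<\xi_0\) persists for \(\beta\) in a neighborhood of \(\alpha\). A finite subcover of \(\widetilde Z\) then gives a single radius \(\bar\rho\) with \(v_\beta(\rho)<\xi_0\) for all \(\rho\ge\bar\rho\) and all \(\beta\in\widetilde Z\). On this region Lemma~\ref{lem:pohozaev_tail_monotonicity} shows that \(P_{v_\beta}\) is strictly decreasing. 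Since \(\lim_{\rho\to\infty}P_{v_\beta}=0\) by \eqref{eq:pohozaev_identity}, we get \(P_{v_\beta}>0\) on \([\bar\rho,\infty)\). Consequently the differential inequality \eqref{eq:tail_bootstrap_logder_ineq} for \(\mathcal L_{v_\beta}\) holds on \([\bar\rho,\infty)\) with the same \(\mu_\theta\) for every \(\beta\).

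\textbf{Step 2: uniform lower bound \(\mathcal L_{v_\beta}\ge N-2-\varepsilon\).} From \eqref{eq:tail_bootstrap_logder_ineq} I will extract the following. If \(\mathcal L(\rho_1)\le N-2-\varepsilon\) for some \(\rho_1\), then \(\mathcal L\le N-2-\varepsilon\) on \([\bar\rho,\rho_1]\), because \(\mathcal L\) increases strictly whenever it lies below \(N-2\). On that interval \(\mathcal L\) also grows at least like \(\mathcal L(\bar\rho)(\rho/\bar\rho)^{\mu_\theta\varepsilon}\). Hence \(\rho_1\le\bar\rho\,\bigl((N-2)/\mathcal L_{v_\beta}(\bar\rho)\bigr)^{1/(\mu_\theta\varepsilon)}\). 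This is uniform once \(\inf_\beta\mathcal L_{v_\beta}(\bar\rho)>0\), i.e.\ once \(-\bar\rho v_\beta'(\bar\rho)/v_\beta(\bar\rho)\) is bounded below. That bound follows from compactness of \(\widetilde Z\), continuity of \(\beta\mapsto(v_\beta(\bar\rho),v'_\beta(\bar\rho))\), and strict negativity of \(v_\beta'\) (Remark~\ref{rem:intro_radial_monotonicity}). So there is \(R_\varepsilon\) independent of \(\beta\) with \(\mathcal L_{v_\beta}\ge N-2-\varepsilon\) on \([R_\varepsilon,\infty)\). Integrating gives \(v_\beta(\rho)\le v_\beta(R_\varepsilon)(R_\varepsilon/\rho)^{N-2-\varepsilon}\le C\rho^{-(N-2-\varepsilon)}\), where \(v_\beta(R_\varepsilon)\le\sup\widetilde Z\) by Proposition~\ref{prop:Ztilde_bounded_general}.

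\textbf{Step 3: upgrading to \(\rho^{2-N}\) and finishing.} Choosing \(\varepsilon\) with \((m-1)(N-2-\varepsilon)>N\) and using Lemma~\ref{lem:prelim_k_local_bounds} with \(A=\sup\widetilde Z\), we get \(y_\beta'\le C\rho^{N-1-(m-1)(N-2-\varepsilon)}\), with \(C\) uniform. The value \(y_\beta(R_\varepsilon)\) is uniformly bounded by continuity and compactness. Hence \(y_\beta\le C\) uniformly, which gives \(-v_\beta'\le C\rho^{1-N}\), and integrating from \(\rho\) to \(\infty\) gives \(v_\beta\le C_*\rho^{2-N}\). Since \(u_\alpha=r(v_\alpha)\le v_\alpha\) by Lemma~\ref{lem:prelim_r_large_bounds}\,(ii), \eqref{eq:compact_family_uniform_tail_v} follows. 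Because \(N\ge5\), the integral \(\int_R^\infty\rho^{4-2N}\rho^{N-1}\,d\rho\to0\), which yields \eqref{eq:compact_family_uniform_tail_u}. Continuity of \(\widetilde U\) in \(L^2\) then follows exactly as the \(L^2\) part of Proposition~\ref{prop:sec5_solution_map_continuity}: local uniform convergence from Lemma~\ref{lem:sec5_continuity} plus the uniform tail. Continuity of \(\widetilde M\) is immediate.

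The main obstacle is Step 2. The qualitative argument in Lemma~\ref{lem:tail_bootstrap} only gives a \(\liminf\), so the uniform control must come from the quantitative growth estimate, anchored at a common starting radius with a uniform positive lower bound on \(\mathcal L\). Step 1 is what makes that common radius available.
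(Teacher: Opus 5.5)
Your proposal is correct and follows the paper's proof. The shared steps are: positivity of $P_{v_\alpha}$ on the tail from its monotonicity and zero limit, the differential inequality for $\mathcal L_\alpha$ anchored at a uniformly controlled radius, non-crossing of the level $N-2-\varepsilon$ combined with power growth to get a uniform $R_\varepsilon$, and then a uniform bound on $y_\alpha$ to upgrade to $\rho^{2-N}$. The only difference is the anchor: the paper takes the $\alpha$-dependent radius $\rho_\alpha$ with $v_\alpha(\rho_\alpha)=b$ and bounds $\rho_\alpha$ and $\mathcal L_\alpha(\rho_\alpha)$ explicitly by integrating the ODE (using only $0<\inf\widetilde Z\le\sup\widetilde Z<\infty$), whereas you obtain a common radius $\bar\rho$ and $\inf_\beta\mathcal L_{v_\beta}(\bar\rho)>0$ softly from continuous dependence and compactness of $\widetilde Z$, which works equally well.
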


\begin{proof}
By Proposition~\ref{prop:Z_tilde_closed}, the set \(\widetilde Z\) is compact.
Set $
a_*=\min\widetilde Z>0$ and $
A^*=\max\widetilde Z<\infty.$ Let \(\xi_0>0\) and \(\theta\in(0,(N-2)/(2N))\) be given by
Lemma~\ref{lem:pohozaev_tail_monotonicity}, and define $
b=\min\left\{\frac{a_*}{2},\,\frac{\xi_0}{2}\right\}>0.$ Since \(k\) is continuous and positive on \((0,\infty)\), the constants $
m_b=\min_{s\in[b,A^*]}k(s)>0$ and $
M_A=\max_{s\in[0,A^*]}k(s)<\infty$ 
are well defined.

Fix \(\alpha\in\widetilde{Z}\). Since \(v_\alpha(0)=\alpha\ge a_*\ge2b\),
\(v_\alpha(\rho)\rightarrow0\) as \(\rho\rightarrow\infty\), and \(v_\alpha\) is continuous and
strictly decreasing, there exists a unique radius \(\rho_\alpha>0\) such that $
v_\alpha(\rho_\alpha)=b.$
Moreover, $
b\le v_\alpha(\rho)\le A^*$ for any $0\le \rho\le \rho_\alpha.$
We first prove that \(\rho_\alpha\) are uniformly bounded above and below. Since $$-(\rho^{N-1}v_\alpha'(\rho))'=\rho^{N-1}k(v_\alpha(\rho)),$$
we have $
-v_\alpha'(\rho)
=
\rho^{1-N}\int_0^\rho s^{N-1}k(v_\alpha(s))\,ds.$
Using \(k(v_\alpha)\le M_A\) on \([0,\rho_\alpha]\), we obtain $$
-v_\alpha'(\rho)\le \frac{M_A}{N}\rho$$ for $0\le \rho\le \rho_\alpha.$
Integrating from \(0\) to \(\rho_\alpha\), we get
\[
b=v_\alpha(\rho_\alpha)\ge \alpha-\frac{M_A}{2N}\rho_\alpha^2\ge
a_*-\frac{M_A}{2N}\rho_\alpha^2.
\]
Hence, $
\rho_\alpha^2\ge \frac{2N(a_*-b)}{M_A}.$ Similarly, using \(k(v_\alpha)\ge m_b\) on \([0,\rho_\alpha]\), we obtain $
\rho_\alpha^2\le \frac{2N(A^*-b)}{m_b}.$
Thus there exist constants \(0<\rho_-<\rho_+<\infty\), independent of \(\alpha\),
such that
\begin{equation}\label{eq:compact_family_hit_radius_bounds}
\rho_-\le \rho_\alpha\le \rho_+
\quad\text{for }\alpha\in\widetilde{Z}.
\end{equation}

As in \eqref{eq:y-and-logder}, set \[
y_\alpha(\rho)=-\rho^{N-1}v_\alpha'(\rho)>0,
\quad
\mathcal L_\alpha(\rho)=\frac{-\rho v_\alpha'(\rho)}{v_\alpha(\rho)}
=
\frac{y_\alpha(\rho)}{\rho^{N-2}v_\alpha(\rho)}.
\] 
A direct computation gives
\[
y_\alpha(\rho_\alpha)
=
\int_0^{\rho_\alpha}s^{N-1}k(v_\alpha(s))\,ds
\ge
\frac{m_b}{N}\rho_\alpha^N.
\]
Hence
\begin{equation}\label{eq:compact_family_f_lower}
\mathcal L_\alpha(\rho_\alpha)
=
\frac{y_\alpha(\rho_\alpha)}{\rho_\alpha^{N-2}b}
\ge
\frac{m_b}{Nb}\rho_\alpha^2
\ge
\frac{m_b}{Nb}\rho_-^2
=L_*>0.
\end{equation}

Now, for \(\rho\ge \rho_\alpha\), we have \(0<v_\alpha(\rho)\le b\le \xi_0/2<\xi_0\).
Therefore Lemma~\ref{lem:pohozaev_tail_monotonicity} gives $
P_{v_\alpha}'(\rho)<0$ for any $\rho\ge \rho_\alpha.$
Since \(v_\alpha\in H^1(\R^N)\), integrating \eqref{eq:pohozaev_tail_derivative} from $0$ to $\rho$ and using \eqref{eq:pohozaev_identity} yields $
\lim\limits_{\rho\rightarrow\infty}P_{v_\alpha}(\rho)=0.$
Hence $P_{v_\alpha}(\rho)>0$ for any $\rho\ge \rho_\alpha.$ As in the proof of \eqref{eq:tail_bootstrap_logder_ineq}, we have
\begin{equation}\label{eq:compact_family_f_ineq}
\mathcal L_\alpha'(\rho)>
\frac{\mu_{\theta}}{\rho}\,\mathcal L_\alpha(\rho)\bigl((N-2)-\mathcal L_\alpha(\rho)\bigr)
\quad\text{for any }\rho\ge \rho_\alpha.
\end{equation}
Fix \(\varepsilon\in(0,N-2)\). We claim that there exists \(R_\varepsilon>0\),
independent of \(\alpha\in\widetilde{Z}\), such that
\begin{equation}\label{eq:compact_family_f_eventually}
\mathcal L_\alpha(\rho)\ge N-2-\varepsilon
\quad\text{for any }\rho\ge R_\varepsilon,\ \alpha\in\widetilde{Z}.
\end{equation}
Indeed, suppose that for some \(\alpha\in\widetilde{Z}\) and some
\(\rho\ge \rho_\alpha\), one has $
\mathcal L_\alpha(\sigma)\le N-2-\varepsilon$ for any $\sigma\in[\rho_\alpha,\rho].$
Then \eqref{eq:compact_family_f_ineq} yields $
\mathcal L_\alpha'(\sigma)\ge \frac{\mu_{\theta}\varepsilon}{\sigma}\mathcal L_\alpha(\sigma)$ for any $\sigma\in[\rho_\alpha,\rho].$ It follows from \(\mathcal L_\alpha>0\), \eqref{eq:compact_family_f_lower} and
\eqref{eq:compact_family_hit_radius_bounds} that 
\begin{equation}\label{eq:f_increasing}
\mathcal L_\alpha(\rho)\ge \mathcal L_\alpha(\rho_\alpha)
\Bigl(\frac{\rho}{\rho_\alpha}\Bigr)^{\mu_{\theta}\varepsilon}\ge
L_*
\Bigl(\frac{\rho}{\rho_+}\Bigr)^{\mu_{\theta}\varepsilon}. 
\end{equation}
Choose \(R_\varepsilon\ge \rho_+\) so large that $
L_*
\Bigl(\frac{R_\varepsilon}{\rho_+}\Bigr)^{\mu_{\theta}\varepsilon}\ge N-2-\varepsilon.$
If for some \(\alpha\in\widetilde{Z}\), one had $
\mathcal L_\alpha(\sigma)<N-2-\varepsilon$ for $\sigma\in[\rho_\alpha,R_\varepsilon],$ 
then \eqref{eq:f_increasing} would imply $
\mathcal L_\alpha(R_\varepsilon)\ge N-2-\varepsilon$, a contradiction. Hence for every \(\alpha\in\widetilde{Z}\), there exists
\(\tau_\alpha\in[\rho_\alpha,R_\varepsilon]\) such that $
\mathcal L_\alpha(\tau_\alpha)=N-2-\varepsilon.$
Moreover, if \(\mathcal L_\alpha(\rho)=N-2-\varepsilon\), then 
\eqref{eq:compact_family_f_ineq} gives $
\mathcal L_\alpha'(\rho)>
\frac{\mu_{\theta}\varepsilon}{\rho}\mathcal L_\alpha(\rho)>0.$
Therefore \(\mathcal L_\alpha\) cannot cross the level \(N-2-\varepsilon\) from above to
below. Since it reaches that level no later than \(R_\varepsilon\), we conclude that \eqref{eq:compact_family_f_eventually} holds.

Integrating \eqref{eq:compact_family_f_eventually}, we obtain, for every $\rho\ge R_\varepsilon$,
\[
v_\alpha(\rho)\le v_\alpha(R_\varepsilon)
\Bigl(\frac{\rho}{R_\varepsilon}\Bigr)^{-(N-2-\varepsilon)}
\le
b\,R_\varepsilon^{\,N-2-\varepsilon}\rho^{-(N-2-\varepsilon)}.
\]
Thus we have proved the uniform decay
\begin{equation}\label{eq:compact_family_preliminary_decay}
v_\alpha(\rho)\le C_\varepsilon \rho^{-(N-2-\varepsilon)}
\quad\text{for any }\rho\ge R_\varepsilon,\ \alpha\in\widetilde{Z},
\end{equation}
for some constant \(C_\varepsilon>0\) independent of \(\alpha\).

By arguments similar to the proofs of \eqref{eq:v'_decay} and \eqref{eq:v_decay}, there exists \(C_*>0\), independent of \(\alpha\in\widetilde{Z}\), such that
\[
0<y_\alpha(\rho)\le C_*
\quad\text{for any }\rho\ge R_\varepsilon,\ \alpha\in\widetilde{Z},
\]
and
\[
v_\alpha(\rho)
\le
C_*\rho^{2-N}
\quad\text{for any }\rho\ge R_\varepsilon,\ \alpha\in\widetilde{Z}.
\]
The same estimate for \(u_\alpha\) follows from \(u_\alpha=r(v_\alpha)\le v_\alpha\). Taking \(R_*:=R_\varepsilon\), we obtain
\eqref{eq:compact_family_uniform_tail_v}.

Now \eqref{eq:compact_family_uniform_tail_u} follows immediately. Indeed, for every \(R\ge R_*\) and \(\alpha\in\widetilde Z\),
\[
\begin{aligned}
\int_{|x|>R}u_\alpha(x)^2\,dx
&=|\mathbb S^{N-1}|\int_R^\infty u_\alpha(\rho)^2\,\rho^{N-1}\,d\rho\\
&\le C_*^2 |\mathbb S^{N-1}|\int_R^\infty \rho^{-2N+4}\rho^{N-1}\,d\rho
= C\int_R^\infty \rho^{-N+3}\,d\rho.
\end{aligned}
\]
Since $N\ge 5$, the last integral tends to $0$ as $R\rightarrow\infty$, uniformly in $\alpha\in\widetilde Z$.

Arguing as in the proof of \eqref{eq:sec5_solution_map_L2}, we infer from Lemma~\ref{lem:sec5_continuity} and
\eqref{eq:compact_family_uniform_tail_u} that 
\(\widetilde U\) is continuous. According to the definition of \(\widetilde M\), \(\widetilde M\) is continuous.
\end{proof}

\begin{proof}[Proof of Theorem~\ref{thm:1.3} \textup{$(ii)$}]
By Proposition~\ref{prop:Z_tilde_closed}, \(\widetilde Z\) is compact. By
Proposition~\ref{prop:compact_family_tail_mass},
\(\widetilde U:\widetilde Z\to L^2(\mathbb R^N)\) is continuous. Since
\(\widetilde{\mathcal S}=\widetilde U(\widetilde Z)\),
\(\widetilde{\mathcal S}\) is a nonempty compact subset of
\(L^2(\mathbb R^N)\). Consequently,
\(\widetilde Y=\widetilde M(\widetilde Z)\) is a nonempty compact subset of
\((0,\infty)\).
\end{proof}

\appendix

\section{Existence of a least energy positive solution in Theorem~\ref{thm:1.5}}
\label{app:gen-existence}

In this appendix, we prove the existence part of Theorem~\ref{thm:1.5}.
\subsection{Mountain-pass geometry}

For \(v\in D^{1,2}(\R^N)\), define
\[
J(v)=\frac12\int_{\R^N}|\nabla v|^2\,dx-
\int_{\R^N}K(v)\,dx.
\]

By \textup{(G1)} and \eqref{eq:gen_k_global_upper}, we have
\begin{equation}\label{eq:J_basic_growth}
0\le K(t)\le C(t_+)^{2^*},
\quad
0\le k(t)\le C(t_+)^{2^*-1}, \quad \text{for }t\in \mathbb{R}.
\end{equation}
Thus \(J\in C^1(D^{1,2}(\R^N),\R)\). 

Define
\[
\Gamma=
\left\{\gamma\in C([0,1],D^{1,2}(\R^N)):
\gamma(0)=0,\ J(\gamma(1))<0\right\},
\]
and
\[
\Gamma^r=
\left\{\gamma\in C([0,1],D_{\mathrm{rad}}^{1,2}(\R^N)):
\gamma(0)=0,\ J(\gamma(1))<0\right\}.
\]
Set
\begin{equation}\label{eq:E_def}
E=
\inf_{\gamma\in\Gamma}\ \sup_{t\in[0,1]}J(\gamma(t)).
\end{equation}

\begin{lemma}\label{lem:J_MP_geometry}
Assume that \textup{(G1)}--\textup{(G3)} hold. Then
\begin{equation}\label{eq:J_radial_minimax}
E=
\inf_{\gamma\in\Gamma^r}\sup_{t\in[0,1]}J(\gamma(t)).
\end{equation}
Moreover, there exist \(d,\eta>0\) and \(v_1\in D_{\mathrm{rad}}^{1,2}(\R^N)\) such that
\[
J(v)\ge\eta\quad\text{for }\|\nabla v\|_2=d,
\quad
E\ge\eta,
\quad
J(v_1)<0.
\]
\end{lemma}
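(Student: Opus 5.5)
We argue in three steps: the lower bound on spheres, a radial point with negative energy, and equality of the two minimax values.

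\emph{Step 1: lower bound on spheres.} We combine the two-sided growth of \(K\). Lemma~\ref{lem:prelim_k_local_bounds} gives \(K(t)\le Ct^{m}\) for \(0\le t\le 1\), and \eqref{eq:J_basic_growth} gives \(K(t)\le Ct^{2^*}\) for all \(t\ge0\). Since \(m>2^*\), for \(t\ge1\) we have \(t^{2^*}\le t^m\), so
\[
0\le K(t)\le C(t_+)^{m}
\]
holds only on \([0,1]\). Hence we use instead
\[
0\le K(t)\le C(t_+)^{2^*}.
\]
This alone gives \(J(v)\ge \frac12\|\nabla v\|_2^2-CS^{-2^*/2}\|\nabla v\|_2^{2^*}\), which is positive for small \(d=\|\nabla v\|_2\). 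So there exist \(d,\eta>0\) with \(J(v)\ge\eta\) on \(\|\nabla v\|_2=d\), and no interpolation is needed. Because \(J(0)=0\) and \(J(\gamma(1))<0\), every \(\gamma\in\Gamma\) must satisfy \(\|\nabla\gamma(1)\|_2>d\): if \(\|\nabla\gamma(1)\|_2<d\), the same inequality gives \(J(\gamma(1))\ge0\). By continuity each path therefore crosses the sphere of radius \(d\), so \(\sup J\circ\gamma\ge\eta\) and \(E\ge\eta\).

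\emph{Step 2: a radial point with negative energy.} We take a radial \(\varphi\in C_c^\infty\) with \(0\le\varphi\le1\) and \(\varphi=1\) on \(B_1\), and set \(v_1=s\varphi\) with \(s\) large. Lemma~\ref{lem:prelim_gen_dual_decomposition}\,(iii) gives \(K_c(t)\ge ct^{2^*}\) for large \(t\). By (ii), \(K_{\mathrm p}\ge -C\) there, and \(K_{\mathrm r}\) is bounded in absolute value since \(k_{\mathrm r}\) has compact support. Also \(K\ge0\) everywhere by (G1). Therefore
\[
\int K(s\varphi)\ge \int_{B_1}K(s)\ge c s^{2^*}-C,
\]
while the Dirichlet term grows only like \(s^2\). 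Since \(2^*>2\), \(J(s\varphi)<0\) for \(s\) large.

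\emph{Step 3: equality of the minimax values.} Since \(\Gamma^r\subset\Gamma\), we have \(E\le\inf_{\Gamma^r}\sup J\), and the task is the reverse inequality. We use Schwarz symmetrization. For \(v\in D^{1,2}\), let \(v^*\) be the symmetric decreasing rearrangement of \(|v|\). Since \(K(t)=0\) for \(t\le0\) (by the zero extension of \(g\)) and \(K\) is nondecreasing on \([0,\infty)\), we have \(\int K(v)\le\int K(|v|)=\int K(v^*)\). Pólya--Szegő gives \(\|\nabla v^*\|_2\le\|\nabla v\|_2\), hence \(J(v^*)\le J(v)\). The difficulty is that the map \(v\mapsto v^*\) is continuous in \(D^{1,2}\) (Coron's result; Almgren--Lieb for \(W^{1,p}\)), but we would rather avoid invoking it. We would try instead to identify both minimax values with the Pohozaev/scaling value. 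For \(v\in D^{1,2}\) with \(\int K(v)>0\), consider the dilation path \(t\mapsto v(\cdot/t)\) for \(t\in[0,L]\), reparametrized to \([0,1]\), along which
\[
J(v(\cdot/t))=\frac{t^{N-2}}2\|\nabla v\|_2^2-t^N\!\int K(v).
\]
By the Jeanjean--Tanaka argument, adapted to the zero-mass setting, \(E\) equals the infimum over such \(v\) of the maximum along this path. Applying the rearrangement pointwise to \(v\), rather than to whole paths, produces a radial \(v^*\) with no larger path-maximum, and its dilation path lies in \(\Gamma^r\). This yields \eqref{eq:J_radial_minimax}. We expect this identification, or alternatively the justification of continuity of rearrangement along paths, to be the main obstacle. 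If the identification fails, we will simply cite the continuity of symmetrization in \(D^{1,2}\) and apply \(\gamma\mapsto\gamma^*\) directly: \(\gamma^*(0)=0\), \(J(\gamma^*(1))\le J(\gamma(1))<0\), and \(\sup J\circ\gamma^*\le\sup J\circ\gamma\).
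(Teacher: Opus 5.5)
Your proposal is correct. Steps 1 and 2 are the paper's argument: the growth bound \(0\le K(t)\le C(t_+)^{2^*}\) with Sobolev gives the sphere estimate and \(E\ge\eta\), and \(K(t)\ge ct^{2^*}-C\) gives \(J(s\varphi)\to-\infty\). Your false start with \(K\le C(t_+)^m\) is harmless; that bound in fact holds globally, it is just useless.

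Step 3 is where you genuinely differ. The paper takes the route you list as a fallback: it symmetrizes whole paths, \(\widetilde\gamma(t)=|\gamma(t)|^*\). It then simply asserts \(\widetilde\gamma\in C([0,1],D^{1,2}_{\mathrm{rad}}(\R^N))\), which tacitly uses continuity of Schwarz symmetrization in \(D^{1,2}\), without citation.

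Your primary route avoids that theorem. It needs only pointwise P\'olya--Szeg\H{o} and equimeasurability, plus the identity
\(\max_{t>0}J(v(\cdot/t))=\frac1N\bigl(\frac{N-2}{2N}\bigr)^{(N-2)/2}A^{N/2}B^{-(N-2)/2}\),
where \(A=\|\nabla v\|_2^2\) and \(B=\int K(v)>0\). This maximum does not increase when \(A\) decreases and \(B\) increases, which is exactly what passing to \(|v|^*\) does. The dilation path of \(|v|^*\) lies in \(\Gamma^r\), since \(\|\nabla v(\cdot/t)\|_2^2=t^{N-2}A\to0\) as \(t\to0\).

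The identification you call the main obstacle is short, and you only need the inequality \(E\ge\inf_{B(v)>0}\max_{t>0}J(v(\cdot/t))\). Set \(P(v)=\frac{N-2}{2}\|\nabla v\|_2^2-N\int K(v)\).
\begin{itemize}
\item Your Step 1 bound gives \(P>0\) on \(0<\|\nabla v\|_2\le\delta\) for some \(\delta>0\).
\item \(J(\gamma(1))<0\) means \(B>\frac12A\), so \(P(\gamma(1))<-\|\nabla\gamma(1)\|_2^2<0\).
\item After the last exit of \(\gamma\) from the closed \(\delta\)-ball, the intermediate value theorem gives \(w=\gamma(t_0)\neq0\) with \(P(w)=0\), and hence \(\int K(w)>0\).
\item For such \(w\) the dilation profile \(t\mapsto\frac{t^{N-2}}2A-t^NB\) peaks at \(t=1\). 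Therefore \(\sup J\circ\gamma\ge J(w)=\max_{t>0}J(w(\cdot/t))\).
\end{itemize}

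So your version trades a deep, unreferenced continuity theorem for an elementary Pohozaev-crossing argument. As a by-product it yields \(E=\inf\{J(w):w\neq0,\ P(w)=0\}\). The paper's version is shorter but rests on that external result.
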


\begin{proof}
Since \(\Gamma^r\subset\Gamma\),
\begin{equation}\label{eq:J_radial_minimax_first_ineq}
E\le
\inf_{\gamma\in\Gamma^r}\sup_{t\in[0,1]}J(\gamma(t)).
\end{equation}
For \(\gamma\in\Gamma\), let \(\widetilde\gamma(t)=|\gamma(t)|^*\) be the
Schwarz symmetric decreasing rearrangement of \(|\gamma(t)|\), which implies
\(\widetilde\gamma\in C([0,1],D_{\mathrm{rad}}^{1,2}(\R^N))\). Thus $
\int_{\R^N}|\nabla\widetilde\gamma(t)|^2\,dx
\le
\int_{\R^N}|\nabla \gamma(t)|^2\,dx.$ 
Moreover,
\eqref{eq:J_basic_growth} yields
\[
\int_{\R^N}K(\widetilde\gamma(t))\,dx
=\int_{\R^N}K(|\gamma(t)|)\,dx
\ge
\int_{\R^N}K(\gamma(t))\,dx.
\]
Thus \(J(\widetilde\gamma(t))\le J(\gamma(t))\) for any
\(t\in[0,1]\) and \(\widetilde\gamma\in\Gamma^r\). Therefore
\[
\inf_{\hat{\gamma} \in\Gamma^r}\sup_{t\in[0,1]}J(\hat{\gamma}(t))
\le \sup_{t\in[0,1]}J(\gamma(t)).
\]
Since \(\gamma\in\Gamma\) is arbitrary,
\[
\inf_{\hat{\gamma}\in\Gamma^r}\sup_{t\in[0,1]}J(\hat{\gamma}(t))
\le
\inf_{\gamma\in\Gamma}\sup_{t\in[0,1]}J(\gamma(t))
=E.
\]
This together with \eqref{eq:J_radial_minimax_first_ineq} yields
\eqref{eq:J_radial_minimax}.

By \eqref{eq:J_basic_growth} and the Sobolev inequality, we obtain 
\[
J(v)\ge\frac12\|\nabla v\|_2^2-C\|\nabla v\|_2^{2^*}, 
\]
which implies that there exist \(d,\eta>0\) such that
\begin{equation}\label{eq:J_MP_geometry_local_bound}
J(v)\ge0\quad\text{for }\|\nabla v\|_2\le d,
\quad
J(v)\ge\eta\quad\text{for }\|\nabla v\|_2=d.
\end{equation}

Let \(0\ne\varphi\in C_c^\infty(\R^N)\cap D_{\mathrm{rad}}^{1,2}(\R^N)\), \(\varphi\ge0\).
By Lemma~\ref{lem:prelim_gen_dual_decomposition} \textup{$(i)$, $(iii)$} and
\(K_{\mathrm p}\ge0\), there exist \(c,C>0\) such that, for all \(t\ge0\),
\(K(t)\ge c t^{2^*}-C\). Hence \(J(s\varphi)\to-\infty\) as
\(s\to+\infty\). Taking \(s_0>0\) sufficiently large and setting
\(v_1=s_0\varphi\), we obtain \(J(v_1)<0\).

Let \(\gamma\in\Gamma\). By \eqref{eq:J_MP_geometry_local_bound} and
\(J(\gamma(1))<0\), the continuity of \(\gamma\) yields a point \(t_0\in(0,1)\)
such that \(\|\nabla\gamma(t_0)\|_2=d\). Thus
\(\sup_{t\in[0,1]}J(\gamma(t))\ge\eta\), and the definition of \(E\) yields
\(E\ge\eta\).
\end{proof}

\begin{proposition}\label{prop:J_below_cstar}
Assume that \textup{(G1)}--\textup{(G3)} hold. Then \(E<\mathcal E_*\).
\end{proposition}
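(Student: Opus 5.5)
We follow the Brezis--Nirenberg strategy and test $J$ along rays through truncated Aubin--Talenti bubbles. Let $\varphi\in C_c^\infty(\R^N)$ be radial with $0\le\varphi\le1$, $\varphi\equiv1$ on $B_1$, $\varphi\equiv0$ outside $B_2$. Put
\[
U_\epsilon(x)=\frac{[N(N-2)\epsilon^2]^{\frac{N-2}{4}}}{(\epsilon^2+|x|^2)^{\frac{N-2}{2}}},\qquad u_\epsilon=\varphi U_\epsilon .
\]
Since $N\ge5$, the classical expansions give
\[
\|\nabla u_\epsilon\|_2^2=S^{N/2}+O(\epsilon^{N-2}),\qquad \|u_\epsilon\|_{2^*}^{2^*}=S^{N/2}+O(\epsilon^N).
\]
By Lemma~\ref{lem:J_MP_geometry}, for $s$ large the path $t\mapsto tsu_\epsilon$ lies in $\Gamma^r$. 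Hence it suffices to prove
\[
\sup_{s\ge0}J(su_\epsilon)<\mathcal E_*\quad\text{for some small }\epsilon>0.
\]

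The lower bound on $K$ comes from Lemma~\ref{lem:prelim_gen_dual_decomposition}. Set $\kappa_0=2^{2/(N-2)}$. Part $(iii)$ gives $K_c(t)=\frac{\kappa_0}{2^*}t^{2^*}-c_Nt^{2^*-1}\ln t+O(t^{2^*-1})$. Part $(ii)$ gives $K_{\mathrm p}(t)\ge ct^{q_1/2}-C$. Part $(i)$ shows $K_{\mathrm r}$ is bounded, and $K\ge0$. The key point is $q_1/2>2^*-1$, which holds because $q_1>2\cdot2^*-2$. So the perturbation term $ct^{q_1/2}$ dominates both the logarithmic loss and the $O(t^{2^*-1})$ remainder. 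We therefore obtain constants $T_0,c_0>0$ with
\[
K(t)\ge\frac{\kappa_0}{2^*}t^{2^*}+c_0t^{q_1/2}\quad\text{for }t\ge T_0,\qquad K\ge0 .
\]

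For the maximizing parameter $s_\epsilon$ of $s\mapsto J(su_\epsilon)$, first show that $s_\epsilon$ stays in a fixed compact subset of $(0,\infty)$ as $\epsilon\to0$. The upper bound follows from $K(t)\ge ct^{2^*}-C$ on the set where $u_\epsilon$ is large. The lower bound follows from $J\ge\eta>0$ on the sphere $\|\nabla v\|_2=d$. Next split $\int K(s u_\epsilon)$ over $B_{\delta\epsilon}$ and its complement, noting that $s u_\epsilon\ge T_0$ on $B_{\delta\epsilon}$ since $u_\epsilon\sim\epsilon^{-(N-2)/2}$ there.

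The pure term gives $\frac{\kappa_0}{2^*}s^{2^*}\|u_\epsilon\|_{2^*}^{2^*}$ up to a loss on the complement. On the complement, $K\ge\frac{\kappa_0}{2^*}t^{2^*}-Ct^{2^*-1}\ln(2+t)-C$ for all $t\ge0$, so the loss is bounded by
\[
C\int_{B_2}\bigl(U_\epsilon^{2^*-1}|\ln U_\epsilon|+1\bigr)\,dx=O\bigl(\epsilon^{(N-2)/2}|\ln\epsilon|\bigr).
\]
The gain on $B_{\epsilon}$ is
\[
c\int_{B_\epsilon}U_\epsilon^{q_1/2}\,dx\ge c\,\epsilon^{N-\frac{(N-2)q_1}{4}} .
\]
Its exponent satisfies $N-\frac{(N-2)q_1}{4}<\frac{N-2}{2}$, which is equivalent to $q_1>2\cdot2^*-2$. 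So the gain strictly beats both the logarithmic loss and the gradient error $O(\epsilon^{N-2})$.

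Putting these together yields
\[
J(su_\epsilon)\le \frac{s^2}{2}S^{N/2}-\frac{\kappa_0 s^{2^*}}{2^*}S^{N/2}+O\bigl(\epsilon^{(N-2)/2}|\ln\epsilon|\bigr)-c\,\epsilon^{N-\frac{(N-2)q_1}{4}}
\]
uniformly on the compact $s$-range. The first two terms are maximized over $s$ and form the comparison level. The main obstacle is matching this level exactly with $\mathcal E_*$. The constant $\kappa_0$ coming from $r(t)\sim2^{1/4}\sqrt t$ rescales the Sobolev level, and this must be reconciled with the normalization in \eqref{eq:Estar_def} and with Proposition~\ref{prop:gen_Z_bounded}, where $\|\nabla\omega_n\|_2^2\to2NE$. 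I would first carry out the computation with the precise leading coefficient $\kappa_0$ and the exact identification of $\mathcal E_*$. If a factor discrepancy appears, I would rescale $v\mapsto\kappa_0^{-(N-2)/4}v$ so that the leading nonlinearity becomes $t^{2^*-1}/2^*$, and then compare with $\mathcal E_*$.
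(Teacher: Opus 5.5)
Your strategy is the paper's: cut-off Aubin--Talenti bubbles, a lower bound for $K$ taken from Lemma~\ref{lem:prelim_gen_dual_decomposition}, and the exponent comparison $N-\frac{(N-2)q_1}{4}<\frac{N-2}{2}\Leftrightarrow q_1>2\cdot2^*-2$. However, one step fails as written. On the complement of $B_{\delta\epsilon}$ you use $K(t)\ge\frac{\kappa_0}{2^*}t^{2^*}-Ct^{2^*-1}\ln(2+t)-C$. The additive constant contributes $C\int_{B_2}1\,dx=C|B_2|$, so the loss is $O(1)$, not $O(\epsilon^{(N-2)/2}|\ln\epsilon|)$. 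An error of order one destroys the comparison with the gain $c\,\epsilon^{N-\frac{(N-2)q_1}{4}}\to0$, so the lower bound must have no additive constant. You can get one from what you already have. For $t\ge T_0$ you proved $K(t)\ge\frac{\kappa_0}{2^*}t^{2^*}+c_0t^{q_1/2}$. On $[0,T_0]$ you have $K\ge0$ and $\frac{\kappa_0}{2^*}t^{2^*}+c_0t^{q_1/2}\le Ct^{2^*-1}$, because $q_1/2>2^*-1$. Hence
\[
K(t)\ge\frac{\kappa_0}{2^*}t^{2^*}+c_0t^{q_1/2}-Ct^{2^*-1}\qquad\text{for all }t\ge0 .
\]
This is the same type of global bound the paper uses; it keeps an extra, harmless term $-C_0t^{2^*-1}\ln(1+t)$. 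Apply it with $t=su_\epsilon$, $s$ in your compact range, and integrate. The loss is $Cs^{2^*-1}\int_{B_2}u_\epsilon^{2^*-1}\,dx=O(\epsilon^{(N-2)/2})$, and the gain $c_0s^{q_1/2}\int u_\epsilon^{q_1/2}\,dx\ge c\,\epsilon^{N-\frac{(N-2)q_1}{4}}$ comes out directly. The splitting into $B_{\delta\epsilon}$ and its complement is then unnecessary.

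The step you call the main obstacle must be settled, not deferred, because the statement is about the specific constant $\mathcal E_*$. It is a one-line computation. With $A=B=S^{N/2}$,
\[
\max_{s\ge0}\Bigl(\frac{s^2}{2}-\frac{\kappa_0}{2^*}s^{2^*}\Bigr)S^{N/2}=\frac1N\,\kappa_0^{-\frac{N-2}{2}}S^{N/2},\qquad \kappa_0^{\frac{N-2}{2}}=2^{\frac{2}{N-2}\cdot\frac{N-2}{2}}=2 .
\]
So the comparison level is exactly $\frac1{2N}S^{N/2}=\mathcal E_*$. The same factor $2$ produces the $2NE$ in Proposition~\ref{prop:gen_Z_bounded}. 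No rescaling is needed. Your proposed rescaling would only reproduce this computation; it cannot change whether the level $E$, which is determined by $J$ alone, lies below the fixed number $\mathcal E_*$. With these two corrections your argument is complete and coincides with the paper's proof.
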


\begin{proof}
We first claim that there exist \(c_0,C_0>0\) such that
\begin{equation}\label{eq:J_below_cstar_global_K_lower}
K(s)
\ge
\frac{2^{2/(N-2)}}{2^*}s^{2^*}
+c_0s^{q_1/2}
-C_0s^{2^*-1}\ln(1+s)
-C_0s^{2^*-1}
\quad\forall s\ge0.
\end{equation}
Indeed, Lemma~\ref{lem:prelim_gen_dual_decomposition} \textup{$(ii)$, $(iii)$}
imply that there exists \(R>0\) such that \eqref{eq:J_below_cstar_global_K_lower} holds for \(s\ge R\). Since \(2^*-1<q_1/2<2^*\), increasing \(C_0\), if
necessary, yields
\[
\frac{2^{2/(N-2)}}{2^*}s^{2^*}+c_0s^{q_1/2}
\le C_0s^{2^*-1}(1+\ln(1+s))
\quad\text{for }0\le s\le R.
\]
Noting that \(K\ge0\), we obtain that 
\eqref{eq:J_below_cstar_global_K_lower} holds on \([0,R]\).

Let
\[
U_\varepsilon(x)=\alpha_N
\left(\frac{\varepsilon}{\varepsilon^2+|x|^2}\right)^{\frac{N-2}{2}},
\quad
\psi_\varepsilon=\zeta U_\varepsilon,
\]
where \(\alpha_N=[N(N-2)]^{(N-2)/4}\) and
\(\zeta\in C_c^\infty(\R^N)\) is radial with \(0\le\zeta\le1\),
\(\zeta\equiv1\) in \(B_1(0)\), and
\(\operatorname{supp}\zeta\subset B_2(0)\). Set
\[
A_\varepsilon=\int_{\R^N}|\nabla\psi_\varepsilon|^2\,dx,
\quad
B_\varepsilon=\int_{\R^N}\psi_\varepsilon^{2^*}\,dx,
\quad
D_\varepsilon=\int_{\R^N}\psi_\varepsilon^{q_1/2}\,dx.
\]
The standard estimates for cut-off Aubin--Talenti functions
\cite[Lemma~1.46]{Willem1996} and
\cite[Appendix, Lemma~7.1]{MR4476243} yield, as
\(\varepsilon\to0^+\),
\begin{equation}\label{eq:J_below_cstar_bubble_estimates}
\begin{gathered}
A_\varepsilon=S^{N/2}+O(\varepsilon^{N-2}),
\quad
B_\varepsilon=S^{N/2}+O(\varepsilon^N),\\
c\varepsilon^{N-\frac{N-2}{4}q_1}
\le D_\varepsilon
\le C\varepsilon^{N-\frac{N-2}{4}q_1},
\quad
\int_{\R^N}\psi_\varepsilon^{2^*-1}\,dx
=O\bigl(\varepsilon^{\frac{N-2}{2}}\bigr).
\end{gathered}
\end{equation}
By \textup{(G3)},
\begin{equation}\label{eq:J_below_cstar_exponent_range}
0<N-\frac{N-2}{4}q_1<\frac{N-2}{2}.
\end{equation}
Since \(\|\psi_\varepsilon\|_{L^\infty}\le C\varepsilon^{-(N-2)/2}\),
\begin{equation}\label{eq:J_below_cstar_log}
\int_{\R^N}\psi_\varepsilon^{2^*-1}\ln(1+\psi_\varepsilon)\,dx
=O\left(\varepsilon^{\frac{N-2}{2}}\ln\frac1\varepsilon\right)
\quad\text{as }\varepsilon\to0^+.
\end{equation}

For \(t\ge0\), \eqref{eq:J_below_cstar_global_K_lower} yields
\begin{align}
J(t\psi_\varepsilon)
&\le
\frac{t^2}{2}A_\varepsilon
-
\frac{2^{2/(N-2)}t^{2^*}}{2^*}B_\varepsilon
-c_0t^{q_1/2}D_\varepsilon
\notag\\
&\quad+
C_0t^{2^*-1}\int_{\R^N}\psi_\varepsilon^{2^*-1}\ln(1+t\psi_\varepsilon)\,dx
+C_0t^{2^*-1}\int_{\R^N}\psi_\varepsilon^{2^*-1}\,dx.
\label{eq:J_below_cstar_main_upper}
\end{align}
The estimates in \eqref{eq:J_below_cstar_bubble_estimates} imply
\begin{equation}\label{eq:J_below_cstar_Gmax}
\sup_{t\ge0}
\left\{
\frac{A_\varepsilon}{2}t^2
-\frac{2^{2/(N-2)}B_\varepsilon}{2^*}t^{2^*}
\right\}
=\mathcal E_*+O(\varepsilon^{N-2}).
\end{equation}
Moreover, \eqref{eq:J_below_cstar_global_K_lower} and
\(s^{2^*-1}(1+\ln(1+s))=o(s^{2^*})\) as \(s\to+\infty\) yield constants
\(a_1,C_1>0\) such that \(K(s)\ge a_1s^{2^*}-C_1\) for all \(s\ge0\).
Since \(\operatorname{supp}\psi_\varepsilon\subset B_2(0)\),
\[
J(t\psi_\varepsilon)
\le
\frac{t^2}{2}A_\varepsilon-a_1t^{2^*}B_\varepsilon+C_1|B_2(0)|.
\]
By \eqref{eq:J_below_cstar_bubble_estimates}, there exist
\(\varepsilon_0,T>0\) such that, for \(0<\varepsilon<\varepsilon_0\),
\begin{equation}\label{eq:J_below_cstar_tail_negative}
J(t\psi_\varepsilon)<0
\quad\text{for }t\ge T.
\end{equation}
Since \(J(0)=0\), \eqref{eq:J_below_cstar_tail_negative} implies
that, for \(0<\varepsilon<\varepsilon_0\),
\begin{equation}\label{eq:J_below_cstar_sup_reduction}
\sup_{t\ge0}J(t\psi_\varepsilon)
=
\sup_{0\le t\le T}J(t\psi_\varepsilon).
\end{equation}
By \eqref{eq:J_below_cstar_bubble_estimates} and
\eqref{eq:J_below_cstar_log},
\begin{equation}\label{eq:J_below_cstar_log_T}
\sup_{0\le t\le T}
t^{2^*-1}\int_{\R^N}\psi_\varepsilon^{2^*-1}
\ln(1+t\psi_\varepsilon)\,dx
=
O\left(\varepsilon^{\frac{N-2}{2}}\ln\frac1\varepsilon\right)
\quad\text{as }\varepsilon\to0^+.
\end{equation}
It follows from \eqref{eq:J_below_cstar_bubble_estimates}, \eqref{eq:J_below_cstar_exponent_range}, \eqref{eq:J_below_cstar_main_upper}, \eqref{eq:J_below_cstar_Gmax},
\eqref{eq:J_below_cstar_sup_reduction} and \eqref{eq:J_below_cstar_log_T} that there exists
\(\varepsilon_1\in(0,\varepsilon_0)\) such that, for
\(0<\varepsilon<\varepsilon_1\),
\[
\sup_{t\ge0}J(t\psi_\varepsilon)
=
\sup_{0\le t\le T}J(t\psi_\varepsilon)
\le
\mathcal E_*-c\varepsilon^{N-\frac{N-2}{4}q_1}
<\mathcal E_*.
\]
For \(0<\varepsilon<\varepsilon_1\), define
\[
\gamma_\varepsilon:[0,1]\to D_{\mathrm{rad}}^{1,2}(\R^N),
\quad
\gamma_\varepsilon(s)=sT\psi_\varepsilon.
\]
Since \(\gamma_\varepsilon(0)=0\) and
\eqref{eq:J_below_cstar_tail_negative} holds at \(t=T\),
\(\gamma_\varepsilon\in\Gamma^r\). Hence
\eqref{eq:J_radial_minimax} yields \(E<\mathcal E_*\).
\end{proof}

\subsection[Palais--Smale compactness below E*]{Palais--Smale compactness below \texorpdfstring{$\mathcal E_*$}{E*}}

\begin{proposition}\label{prop:J_PS_below_cstar}
Assume that \textup{(G1)}--\textup{(G3)} hold. Let
\(\{v_n\}\subset D_{\mathrm{rad}}^{1,2}(\R^N)\) be a bounded Palais--Smale sequence for
\(J|_{D_{\mathrm{rad}}^{1,2}(\R^N)}\) with \(J(v_n)\to c_{\mathrm{ps}}\in(0,\mathcal E_*)\). 
Then there exists a
nontrivial nonnegative weak solution \(v\in D_{\mathrm{rad}}^{1,2}(\R^N)\) of
\eqref{eq:intro_dual} such that \(v_n\to v\) in $D_{\mathrm{rad}}^{1,2}(\R^N)$.
\end{proposition}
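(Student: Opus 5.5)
We argue in the standard way for critical problems below the first bubble level, with radial symmetry supplying local compactness and the decomposition \(k=k_c+k_{\mathrm p}+k_{\mathrm r}\) of Lemma~\ref{lem:prelim_gen_dual_decomposition} isolating the critical part.

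Since \(\{v_n\}\) is bounded in \(D^{1,2}_{\mathrm{rad}}(\R^N)\), after passing to a subsequence we have \(v_n\rightharpoonup v\) in \(D^{1,2}\), \(v_n\to v\) in \(L^p_{\mathrm{loc}}\) for \(p<2^*\), and a.e. By \eqref{eq:J_basic_growth}, \(k(v_n)\) is bounded in \(L^{2N/(N+2)}\) and converges a.e. Therefore \(J'(v)=0\) on \(D^{1,2}_{\mathrm{rad}}\), and by the principle of symmetric criticality \(v\) is a weak solution of \eqref{eq:intro_dual}. Testing \(J'(v)\) with \(v_-\) and using \(k(t)=0\) for \(t\le0\) gives \(v\ge0\).

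Next set \(w_n=v_n-v\). We split \(J\) into the pure-critical part \(\frac12\|\nabla\cdot\|_2^2-\int K_c\) and the lower-order parts \(K_{\mathrm p}\) and \(K_{\mathrm r}\).

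\begin{enumerate}
\item[(a)] \emph{The lower-order parts pass to the limit.} By Lemma~\ref{lem:prelim_gen_dual_decomposition}\((i)\),\((ii)\) and Lemma~\ref{lem:prelim_k_local_bounds}, we have \(|k_{\mathrm p}(t)|+|k_{\mathrm r}(t)|\le C(|t|^{m-1}\wedge\ldots)\); more precisely, \(k-k_c\) is bounded by \(C|t|^{m-1}\) near \(0\) and by \(C|t|^{q_2/2-1}\) at infinity, with \(2<2^*<m\) and \(q_2/2<2^*\). Since \(v_n\) is radial with uniformly bounded \(D^{1,2}\) norm, the Strauss radial lemma \(|v_n(x)|\le C|x|^{-(N-2)/2}\) gives uniform control of \(\int_{|x|>R}|v_n|^{m}\). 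This is integrable at infinity, since \(m(N-2)/2>N\) because \(m>2^*\). Near the origin, subcritical local compactness of the \(q_2/2\)-growth handles the rest (a Strauss-type compactness lemma, \cite{MR695535}). Hence
\[
\int (K-K_c)(v_n)\to\int(K-K_c)(v),\qquad \int (k-k_c)(v_n)v_n\to\int(k-k_c)(v)v .
\]
\item[(b)] \emph{The critical part splits.} By Lemma~\ref{lem:prelim_gen_dual_decomposition}\((iii)\), \(k_c(t)=2^{2/(N-2)}t^{2^*-1}+O(t^{2^*-2}\ln t)\), and the error term is again subcritical. So the Brezis--Lieb lemma applied to \(K_c\) yields
\[
J(v_n)=J(v)+\tfrac12\|\nabla w_n\|_2^2-\tfrac{2^{2/(N-2)}}{2^*}\|w_n\|_{2^*}^{2^*}+o(1),
\]
\[
o(1)=\|\nabla w_n\|_2^2-2^{2/(N-2)}\|w_n\|_{2^*}^{2^*}.
\]
Here the second identity comes from \(J'(v_n)v_n\to0\) and \(J'(v)v=0\).
\end{enumerate}

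Suppose \(\|\nabla w_n\|_2^2\to l\). Then \(2^{2/(N-2)}\|w_n\|_{2^*}^{2^*}\to l\). The Sobolev inequality gives \(S\,(l/2^{2/(N-2)})^{2/2^*}\le l\), so if \(l>0\), then \(l\ge S^{N/2}\,2^{-2/(N-2)\cdot (N-2)/2}=S^{N/2}/2\). In that case \(c_{\mathrm{ps}}\ge J(v)+\frac1N l\ge J(v)+\frac{1}{2N}S^{N/2}=J(v)+\mathcal E_*\). Since \(v\) is a nonnegative critical point, Pohozaev gives \(J(v)=\frac1N\|\nabla v\|_2^2\ge0\), hence \(c_{\mathrm{ps}}\ge\mathcal E_*\), a contradiction. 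Thus \(l=0\) and \(v_n\to v\) strongly. Then \(J(v)=c_{\mathrm{ps}}>0\), so \(v\neq0\).

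The main obstacle is step (a): making sure the non-critical remainder, including the logarithmic correction in \(k_c\), is genuinely compact on \(D^{1,2}_{\mathrm{rad}}\) without an \(L^2\) bound. I would handle this by splitting \(\R^N\) into \(\{|x|\le\delta\}\), \(\{\delta<|x|<R\}\) and \(\{|x|\ge R\}\). The inner ball is controlled by subcriticality (\(q_2/2<2^*\) and the \(t^{2^*-2}\ln t\) error), the exterior by the radial decay and \(m>2^*\), and the annulus by Rellich compactness.
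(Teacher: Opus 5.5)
Your step (a) and the final Sobolev/energy contradiction are sound, but step (b) has a genuine gap. The expansion $k_c(t)=2^{2/(N-2)}t^{2^*-1}+O(t^{2^*-2}\ln t)$ from Lemma~\ref{lem:prelim_gen_dual_decomposition}\,(iii) holds only as $t\to+\infty$. Near $t=0$ you have $0\le k_c(t)=r'(t)r(t)^{2\cdot2^*-1}\le t^{2\cdot2^*-1}$. Hence $k_c(t)-2^{2/(N-2)}t^{2^*-1}\sim-2^{2/(N-2)}t^{2^*-1}$ as $t\to0^+$: the error is of exactly critical order at small values, so no Strauss-type compactness applies to it.

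Brezis--Lieb only gives $\int K_c(v_n)=\int K_c(v)+\int K_c(w_n)+o(1)$, and the analogous splitting for $k_c(s)s$. Replacing $\int K_c(w_n)$ by $\frac{2^{2/(N-2)}}{2^*}\|w_n\|_{2^*}^{2^*}$ is unjustified, and it is false for general bounded radial sequences. Take $w_n(x)=\lambda_n^{(N-2)/2}U(\lambda_n x)$ with $\lambda_n\to0$ and $U$ a Talenti bubble. Then $w_n$ is radial, bounded in $D^{1,2}$, weakly null, and has constant $\|w_n\|_{2^*}$. Yet $\int K_c(w_n)\to0$ and $\int k_c(w_n)w_n\to0$, because $\|w_n\|_\infty\to0$.

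Your three-region heuristic puts the difficulty in the wrong place. The bad region is $\{|x|\ge R\}$, where $w_n$ is small and the radial lemma only gives $|w_n|^{2^*}\lesssim|x|^{-N}$, which is borderline. There, $m>2^*$ controls $k-k_c$, but not $k_c-2^{2/(N-2)}t^{2^*-1}$. For a genuine Palais--Smale sequence one can exclude such spreading profiles using $m>2^*$, but that needs an extra profile-type argument you do not give.

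The fix is to replace your two identities by inequalities in the right direction. Lemma~\ref{lem:prelim_r_large_bounds}\,(iii) gives $t\,(\ln k_c)'(t)=\frac{tr'(t)}{r(t)}\bigl[(2\cdot2^*-1)-\frac{2r(t)^2}{1+2r(t)^2}\bigr]\ge2^*-1$. So $k_c(t)/t^{2^*-1}$ is nondecreasing with limit $2^{2/(N-2)}$, and therefore $k_c(t)t\le2^{2/(N-2)}t^{2^*}$ and $K_c(t)\le\frac1{2^*}k_c(t)t$ for $t\ge0$. Now use the correct relations $c_{\mathrm{ps}}=J(v)+\lim_n\bigl[\frac12\|\nabla w_n\|_2^2-\int K_c(w_n)\bigr]$ and $\|\nabla w_n\|_2^2-\int k_c(w_n)w_n\to0$. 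The first inequality yields $l\ge\frac12S^{N/2}$ when $l>0$, the second yields $c_{\mathrm{ps}}\ge J(v)+\frac lN$, and your conclusion goes through.

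The paper avoids the issue by a different decomposition. It splits off $f_c(s)=2^{2/(N-2)}\eta(s_+)s_+^{2^*-1}$, where $\eta$ is a nondecreasing cutoff vanishing near $0$. Then $b_c=k-f_c$ is $o(|s|^{2^*-1})$ at both $0$ and $\infty$, hence compact on $D^{1,2}_{\mathrm{rad}}$. The inequalities it needs, $F_c(s)\le\frac1{2^*}f_c(s)s$ and $f_c(s)s\le2^{2/(N-2)}s_+^{2^*}$, follow at once from $0\le\eta\le1$ and the monotonicity of $\eta$.
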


\begin{proof}
Fix \(R_c>1\), and choose a nondecreasing
\(\eta\in C^\infty([0,\infty))\), \(0\le\eta\le1\), such that
\(\eta=0\) on \([0,R_c]\) and
\(\eta=1\) on \([2R_c,\infty)\). For \(s\in\R\), define
\(f_c(s)=2^{2/(N-2)}\eta(s_+)(s_+)^{2^*-1}\) and
\(F_c(s)=\int_0^s f_c(\tau)\,d\tau\). Let
\(b_c=k-f_c\) and \(B_c=K-F_c\).
Thus
\[
J(v)=\frac12\|\nabla v\|_2^2-\int_{\R^N}F_c(v)\,dx
-\int_{\R^N}B_c(v)\,dx.
\]
Lemmas~\ref{lem:prelim_k_local_bounds} and
\ref{lem:prelim_gen_dual_decomposition} \textup{$(i)$--$(iii)$}, together with the
definition of \(b_c\), yield
\begin{equation}\label{eq:PS_bc_subcritical}
\lim\limits_{s\to0}\frac{|B_c(s)|}{|s|^{2^*}}
=\lim\limits_{|s|\to\infty}\frac{|B_c(s)|}{|s|^{2^*}}
=\lim\limits_{s\to0}\frac{|b_c(s)|}{|s|^{2^*-1}}
=\lim\limits_{|s|\to\infty}\frac{|b_c(s)|}{|s|^{2^*-1}}=0.
\end{equation}

By the boundedness of \(\{v_n\}\) in \(D_{\mathrm{rad}}^{1,2}(\R^N)\), after passing to a
subsequence there exists \(v\in D_{\mathrm{rad}}^{1,2}(\R^N)\) such that
\begin{equation}\label{eq:PS_vn_weak_ae}
v_n\rightharpoonup v\quad\text{in }D_{\mathrm{rad}}^{1,2}(\R^N),\quad
v_n\to v\quad\text{a.e. in }\R^N.
\end{equation}
By \eqref{eq:PS_bc_subcritical} and \cite[Lemma~A.1 (b)]{MR4215019}, we obtain 
\begin{equation}\label{eq:PS_Bc_compact}
\int_{\R^N}B_c(v_n)\,dx\to \int_{\R^N}B_c(v)\,dx,
\quad
\int_{\R^N}b_c(v_n)v_n\,dx
\to
\int_{\R^N}b_c(v)v\,dx.
\end{equation}
Moreover, \eqref{eq:PS_bc_subcritical} and the definition of \(f_c\) imply that
there exists \(C>0\) such that
\begin{equation}\label{eq:PS_bc_fc_growth}
|b_c(s)|+|f_c(s)|\le C|s|^{2^*-1}
\quad\text{for all }s\in\R.
\end{equation}
By \eqref{eq:PS_bc_fc_growth}, \eqref{eq:PS_vn_weak_ae}, and the Sobolev embedding,
\[
b_c(v_n)\rightharpoonup b_c(v),\quad
f_c(v_n)\rightharpoonup f_c(v)
\quad\text{in }L^{2N/(N+2)}(\R^N).
\]
Thus, for any \(\varphi\in D_{\mathrm{rad}}^{1,2}(\R^N)\),
\begin{equation}\label{eq:PS_bc_test}
\int_{\R^N} b_c(v_n)\varphi\,dx
\to
\int_{\R^N} b_c(v)\varphi\,dx,
\end{equation}
and
\begin{equation}\label{eq:PS_fc_test}
\int_{\R^N}f_c(v_n)\varphi\,dx\to
\int_{\R^N}f_c(v)\varphi\,dx.
\end{equation}
Let \(w_n=v_n-v\).
Moreover, it follows from \eqref{eq:PS_Bc_compact} and \eqref{eq:PS_bc_test} that
\begin{equation}\label{eq:PS_bc_wn}
\int_{\R^N}\bigl(b_c(v_n)-b_c(v)\bigr)w_n\,dx=o(1).
\end{equation}
By \eqref{eq:PS_vn_weak_ae}, \eqref{eq:PS_bc_test}, \eqref{eq:PS_fc_test}, and
$J^{\prime}(v_n) \to 0$ in $\bigl(D_{\mathrm{rad}}^{1,2}(\R^N)\bigr)^*$, for any \(\varphi\in D_{\mathrm{rad}}^{1,2}(\R^N)\),
\[
\langle J'(v),\varphi\rangle
=\lim\limits_{n\to\infty}\langle J'(v_n),\varphi\rangle=0.
\]

The definition of \(f_c\) yields
\begin{equation}\label{eq:PS_fc_growth}
|F_c(s)|\le C|s|^{2^*},\quad
|f_c(s)|\le C|s|^{2^*-1},\quad
|f_c'(s)|\le C|s|^{2^*-2},
\end{equation}
and \cite[Lemma~A.1 (a)]{MR4215019} yields
\begin{equation}\label{eq:PS_Fc_split}
\int_{\R^N}F_c(v_n)\,dx
=\int_{\R^N}F_c(v)\,dx+\int_{\R^N}F_c(w_n)\,dx+o(1).
\end{equation}
Fix \(\varepsilon>0\). Let \(M\ge2\) and \(a,b\in\R\). If \(|b|\le M|a|\), then
\eqref{eq:PS_fc_growth} gives
\begin{equation}\label{eq:PS_fc_split_case_small}
|f_c(a+b)-f_c(a)-f_c(b)|
\le C_M |a|^{2^*-1}.
\end{equation}
If \(|b|>M|a|\), then
\begin{equation}\label{eq:PS_fc_split_case_large}
\begin{aligned}
|f_c(a+b)-f_c(a)-f_c(b)|
&\le |f_c(a+b)-f_c(b)|+|f_c(a)|\\
&=\left|\int_0^1 f_c'(b+ta)a\,dt\right|+|f_c(a)|\\
&\le C(1+M^{-1})^{2^*-2}|a||b|^{2^*-2}
   +C|a|^{2^*-1}\\
&\le C(1+M^{-1})^{2^*-2}M^{-1}|b|^{2^*-1}
   +C|a|^{2^*-1}.
\end{aligned}
\end{equation}
Taking \(M\) sufficiently large in \eqref{eq:PS_fc_split_case_large} and using
\eqref{eq:PS_fc_split_case_small}, we obtain, for some \(C_\varepsilon>0\),
\begin{equation}
|f_c(s+t)-f_c(a)-f_c(t)|
\le \varepsilon |t|^{2^*-1}+C_\varepsilon |s|^{2^*-1}
\end{equation}
for all \(s,t\in\R\), which implies that 
\[
\Bigl(
|f_c(v_n)-f_c(v)-f_c(w_n)|-\varepsilon |w_n|^{2^*-1}
\Bigr)_+^{2N/(N+2)}
\le C_\varepsilon |v|^{2^*}.
\]
By \eqref{eq:PS_vn_weak_ae} and the
dominated convergence theorem, we obtain 
\begin{equation}\label{eq:PS_fc_positive_part}
\Bigl(
|f_c(v_n)-f_c(v)-f_c(w_n)|-\varepsilon |w_n|^{2^*-1}
\Bigr)_+
\to0
\quad\text{in }L^{2N/(N+2)}(\R^N).
\end{equation}
By \eqref{eq:PS_fc_positive_part} and the boundedness of \(\{w_n\}\) in
\(L^{2^*}(\R^N)\),
\[
\limsup\limits_{n\to\infty}
\|f_c(v_n)-f_c(v)-f_c(w_n)\|_{L^{2N/(N+2)}(\R^N)}
\le C\varepsilon,
\]
which implies that 
\begin{equation}\label{eq:PS_fc_split}
f_c(v_n)-f_c(v)-f_c(w_n)
\to0
\quad\text{in }\bigl(D^{1,2}(\R^N)\bigr)^*.
\end{equation}

Combining \eqref{eq:PS_Bc_compact},
\eqref{eq:PS_Fc_split}, and
\[
\|\nabla v_n\|_2^2=\|\nabla v\|_2^2+\|\nabla w_n\|_2^2+o(1),
\]
we obtain 
\begin{equation}\label{eq:PS_split_energy_fixed}
J(v_n)=J(v)+\frac12\|\nabla w_n\|_2^2
-\int_{\R^N}F_c(w_n)\,dx+o(1).
\end{equation}
Since \(J'(v)=0\), \eqref{eq:PS_bc_wn} and \eqref{eq:PS_fc_split} yield
\begin{equation}\label{eq:PS_split_derivative_fixed}
\langle J'(v_n),w_n\rangle
=\|\nabla w_n\|_2^2-\int_{\R^N}f_c(w_n)w_n\,dx+o(1).
\end{equation}

Suppose by contradiction that $
\|\nabla w_n\|_2^2\to L_w>0.$ 
Since \(J'(v_n)\to0\) in \(\bigl(D_{\mathrm{rad}}^{1,2}(\R^N)\bigr)^*\) and
\(\{w_n\}\) is bounded in \(D_{\mathrm{rad}}^{1,2}(\R^N)\),
\eqref{eq:PS_split_derivative_fixed} yields
\begin{equation}
L_w=\lim\limits_{n\to\infty}\int_{\R^N}f_c(w_n)w_n\,dx
=2^{2/(N-2)}
\lim\limits_{n\to\infty}\int_{\R^N}\eta((w_n)_+)((w_n)_+)^{2^*}\,dx.
\end{equation}
By \(0\le\eta\le1\) and the Sobolev inequality,
\begin{equation}\label{eq:PS_Lw_sobolev_lower}
2^{-2/(N-2)}L_w
\le
\limsup\limits_{n\to\infty}\int_{\R^N}((w_n)_+)^{2^*}\,dx
\le
S^{-2^*/2}L_w^{2^*/2},
\end{equation}
hence $
L_w\ge\frac12S^{N/2}.$

Since \(\eta\) is nondecreasing,
\begin{equation}\label{eq:PS_Fc_fc_bound}
F_c(s)=2^{2/(N-2)}
\int_0^{s_+}\eta(\tau)\tau^{2^*-1}\,d\tau
\le \frac1{2^*}f_c(s)s
\quad\text{for }s\in\R.
\end{equation}
Moreover, \(J'(v_n)\to0\) in \(\bigl(D_{\mathrm{rad}}^{1,2}(\R^N)\bigr)^*\), the boundedness of
\(\{w_n\}\), and \eqref{eq:PS_split_derivative_fixed} imply
\begin{equation}\label{eq:PS_wn_nehari_defect}
\|\nabla w_n\|_2^2-\int_{\R^N}f_c(w_n)w_n\,dx=o(1).
\end{equation}
Using \(J(v_n)\to c_{\mathrm{ps}}\), \eqref{eq:PS_split_energy_fixed},
\eqref{eq:PS_Fc_fc_bound}, and \eqref{eq:PS_wn_nehari_defect}, we obtain
\begin{equation}\label{eq:PS_energy_liminf_bound}
\begin{aligned}
c_{\mathrm{ps}}
&=J(v)+
\lim\limits_{n\to\infty}\left[\frac12\|\nabla w_n\|_2^2-
\int_{\R^N}F_c(w_n)\,dx\right]\\
&\ge
J(v)+\liminf\limits_{n\to\infty}
\left[\frac12\|\nabla w_n\|_2^2-\frac1{2^*}
\int_{\R^N}f_c(w_n)w_n\,dx\right]\\
&=
J(v)+\lim\limits_{n\to\infty}
\left[\left(\frac12-\frac1{2^*}\right)\|\nabla w_n\|_2^2
\right]\\
&=
J(v)+\left(\frac12-\frac1{2^*}\right)L_w
=J(v)+\frac1N L_w.
\end{aligned}
\end{equation}
The Pohozaev identity 
yields $
\frac{N-2}{2}\|\nabla v\|_2^2=N\int_{\R^N}K(v)\,dx,$
and hence \(J(v)=N^{-1}\|\nabla v\|_2^2\ge0\). Therefore
\[
c_{\mathrm{ps}}\ge\frac1N L_w\ge\frac1{2N}S^{N/2}=\mathcal E_*,
\]
by \eqref{eq:PS_energy_liminf_bound} and
\eqref{eq:PS_Lw_sobolev_lower}, contradicting \(c_{\mathrm{ps}}<\mathcal E_*\).

Hence \(w_n\to0\) in
\(D_{\mathrm{rad}}^{1,2}(\R^N)\). By \eqref{eq:PS_split_energy_fixed}, we have \(J(v)=c_{\mathrm{ps}}\). Moreover, by \eqref{eq:J_basic_growth},
\begin{equation}\label{eq:PS_nonnegative_test}
0=\langle J'(v),v_-\rangle
=-\int_{\R^N}|\nabla v_-|^2\,dx.
\end{equation}
Therefore \(v\ge0\). We complete the proof.
\end{proof}

\subsection{Existence at the least energy level}

\begin{proposition}\label{prop:sec3_groundstate_identification}
Assume that \textup{(G1)}--\textup{(G3)} hold. Then \(J\) admits a positive radial critical point \(v\in D_{\mathrm{rad}}^{1,2}(\R^N)\)
with \(J(v)=E\). Moreover,
\begin{equation}\label{eq:ground_state_level_identity}
E=
\inf\left\{
J(w):\ w\in D^{1,2}(\R^N)\setminus\{0\},\ J'(w)=0
\right\}.
\end{equation}
\end{proposition}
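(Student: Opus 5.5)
We obtain a critical point at the mountain-pass level $E$ and then show that no nontrivial critical point has lower energy. By Lemma~\ref{lem:J_MP_geometry}, $J$ has mountain-pass geometry on $D^{1,2}_{\mathrm{rad}}(\R^N)$, and $E$ equals the radial minimax value. We will produce a \emph{bounded} Palais--Smale sequence at level $E$. Boundedness is the delicate point, since no Ambrosetti--Rabinowitz condition is assumed, so we use Jeanjean's augmented functional. Set $\widetilde J(\theta,v)=J(v(e^{-\theta}\cdot))$ on $\R\times D^{1,2}_{\mathrm{rad}}(\R^N)$, which has the same minimax level $E$. Its mountain-pass theorem yields $(\theta_n,v_n)$ with $J(v_n)\to E$, $J'(v_n)\to0$, and $P(v_n)\to0$, where
\[
P(v)=\frac{N-2}{2}\|\nabla v\|_2^2-N\int_{\R^N}K(v)\,dx
\]
is the Pohozaev functional. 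Then
\[
J(v_n)-\frac1N P(v_n)=\frac1N\|\nabla v_n\|_2^2\to E ,
\]
so $\{v_n\}$ is bounded. Since $0<\eta\le E<\mathcal E_*$ by Lemma~\ref{lem:J_MP_geometry} and Proposition~\ref{prop:J_below_cstar}, Proposition~\ref{prop:J_PS_below_cstar} gives $v_n\to v$ strongly. Here $v\ge0$ is a nontrivial radial critical point with $J(v)=E$. By elliptic regularity and the strong maximum principle, $v>0$.

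For \eqref{eq:ground_state_level_identity}, the inequality $\ge E$ for the infimum follows because $v$ is a nontrivial critical point at level $E$. For the reverse, let $w\neq0$ be any critical point of $J$. Local $L^\infty$ bounds and regularity, available from the growth \eqref{eq:J_basic_growth}, justify the Pohozaev identity $P(w)=0$, so $J(w)=\frac1N\|\nabla w\|_2^2>0$. Consider the dilation path $w_t(x)=w(x/t)$. Then
\[
J(w_t)=\frac{t^{N-2}}{2}\|\nabla w\|_2^2-t^N\int_{\R^N}K(w)\,dx .
\]
Since $\int K(w)\,dx=\frac{N-2}{2N}\|\nabla w\|_2^2>0$, this function of $t$ attains its maximum exactly at $t=1$ and tends to $-\infty$ as $t\to\infty$. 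Reparametrizing $t\in[0,T]$ with $T$ large, and using $w_t\to0$ in $D^{1,2}$ as $t\to0$ (because $N\ge3$), we obtain a path in $\Gamma$ whose maximum equals $J(w)$. Hence $E\le J(w)$, which proves the identity.

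The main obstacle is the bounded Palais--Smale sequence. If the Jeanjean argument is awkward in the non-reflexive setting, I would instead use the Pohozaev-manifold characterization. The monotone structure of $t\mapsto J(v(\cdot/t))$ for any $v$ with $\int K(v)>0$ shows that
\[
E=\inf\{J(v):P(v)=0,\ v\neq0\}.
\]
A minimizing sequence on this manifold is bounded by the identity $J=\frac1N\|\nabla\cdot\|_2^2$ there, and it can be turned into a Palais--Smale sequence by Ekeland's principle. A further technical point is verifying $P(w)=0$ for nonradial critical points. This needs $w\in L^\infty_{\mathrm{loc}}$ via a Brezis--Kato argument, after which the Pohozaev identity holds for $D^{1,2}$ solutions with critical growth.
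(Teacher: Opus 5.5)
Your proposal is correct and follows essentially the same route as the paper: the augmented functional $\Phi(\theta,v)=J(v(e^{-\theta}\cdot))$ yields a Palais--Smale sequence with $P(v_n)\to0$, boundedness then comes from $\|\nabla v_n\|_2^2=NJ(v_n)-P(v_n)$, compactness comes from Proposition~\ref{prop:J_PS_below_cstar} since $E\in(0,\mathcal E_*)$, and the reverse inequality comes from the dilation path $w_t=w(\cdot/t)$ together with the Pohozaev identity (the paper obtains the needed $\theta_n\to0$ by taking almost-optimal paths in $\{0\}\times D^{1,2}_{\mathrm{rad}}(\R^N)$, which is what turns $D\Phi(z_n)\to0$ into $J'(v_n)\to0$). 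One small slip: the existence of $v$ with $J(v)=E$ gives $\inf\le E$, not $\ge E$, and it is the dilation argument that gives $\inf\ge E$.
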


\begin{proof}
Define \(\Phi:\R\times D_{\mathrm{rad}}^{1,2}(\R^N)\to\R\) by
\[
\Phi(\theta,v)=J(v(e^{-\theta}\cdot))
=\frac{e^{(N-2)\theta}}2\|\nabla v\|_2^2
-e^{N\theta}\int_{\R^N}K(v)\,dx.
\]
Since \(J\in C^1(D^{1,2}(\R^N),\R)\), $
\Phi\in C^1(\R\times D_{\mathrm{rad}}^{1,2}(\R^N),\R).$ Define
\[
\widehat\Gamma=
\left\{
\eta\in C([0,1],\R\times D_{\mathrm{rad}}^{1,2}(\R^N)):
\eta(0)=(0,0),\ \Phi(\eta(1))<0
\right\}.
\]
For \(\eta\in\widehat\Gamma\), write \(\eta(t)=(\theta(t),v(t))\) and define $
\gamma_\eta(t)=v(t)(e^{-\theta(t)}\cdot)$ for $ t\in[0,1].$ 
Then \(\gamma_\eta\in\Gamma^r\) and
\(J(\gamma_\eta(t))=\Phi(\eta(t))\) for \(t\in[0,1]\). Conversely, if
\(\gamma\in\Gamma^r\), then \(\eta_\gamma(t)=(0,\gamma(t))\) belongs to
\(\widehat\Gamma\) and \(\Phi(\eta_\gamma(t))=J(\gamma(t))\). Hence, by
\eqref{eq:J_radial_minimax},
\[
E=\inf_{\eta\in\widehat\Gamma}\max_{t\in[0,1]}\Phi(\eta(t)).
\]

Let \(\varepsilon_n\to0^+\) be such that \(2\varepsilon_n^2<E\) for any
\(n\). By \eqref{eq:J_radial_minimax}, choose \(\gamma_n\in\Gamma^r\) such that
\[
\max_{t\in[0,1]}J(\gamma_n(t))\le E+\varepsilon_n^2.
\]
Set \(\eta_n(t)=(0,\gamma_n(t))\) for \(t\in[0,1]\).
By Lemma~\ref{lem:J_MP_geometry}, \(E>0\). Hence,
\[
\Phi(\eta_n(0))=0<E,\quad
\Phi(\eta_n(1))=J(\gamma_n(1))<0<E.
\]
By \cite[Theorem~2.8]{Willem1996}, we obtain that there exists
\(z_n=(\theta_n,\xi_n)\in\R\times D_{\mathrm{rad}}^{1,2}(\R^N)\) such that
\[
\operatorname{dist}_{\R\times D_{\mathrm{rad}}^{1,2}(\R^N)}
\bigl(z_n,\eta_n([0,1])\bigr)\le2\varepsilon_n,
\quad
|\Phi(z_n)-E|\le2\varepsilon_n^2,
\quad
\|D\Phi(z_n)\|_{(\R\times D_{\mathrm{rad}}^{1,2}(\R^N))^*}\le8\varepsilon_n.
\]

Thus, $
|\theta_n|\le
\operatorname{dist}_{\R\times D_{\mathrm{rad}}^{1,2}(\R^N)}
\bigl(z_n,\eta_n([0,1])\bigr)
\le2\varepsilon_n,$ which implies \(\theta_n\to0\). Define
\(v_n(x)=\xi_n(e^{-\theta_n}x)\). So $\Phi(z_n)=J(v_n) \to E.$ Noting that, for any
\(\psi\in D_{\mathrm{rad}}^{1,2}(\R^N)\),
\[
\langle (J|_{D_{\mathrm{rad}}^{1,2}(\R^N)})'(v_n),\psi\rangle
=\partial_v\Phi(\theta_n,\xi_n)[\psi(e^{\theta_n}\cdot)]
\]
and $
\|\nabla(\psi(e^{\theta_n}\cdot))\|_2
=e^{-(N-2)\theta_n/2}\|\nabla\psi\|_2.$
Therefore, by \(\|D\Phi(z_n)\|_{(\R\times D_{\mathrm{rad}}^{1,2}(\R^N))^*}\le8\varepsilon_n\), we obtain that $$
\left|\langle J'(v_n),\psi\rangle\right|
\le
8\varepsilon_n e^{-(N-2)\theta_n/2}\|\nabla\psi\|_2,$$
and therefore $
J'(v_n)\to0$ in $\bigl(D_{\mathrm{rad}}^{1,2}(\R^N)\bigr)^*.$

For \(v\in D^{1,2}(\R^N)\), define
\[
P(v)=\frac{N-2}{2}\int_{\R^N}|\nabla v|^2\,dx
-N\int_{\R^N}K(v)\,dx.
\]
Then
\[
\partial_\theta\Phi(\theta_n,\xi_n)
=\frac{N-2}{2}e^{(N-2)\theta_n}\|\nabla\xi_n\|_2^2
-N e^{N\theta_n}\int_{\R^N}K(\xi_n)\,dx
=P(v_n).
\]
Since
\[
|\partial_\theta\Phi(\theta_n,\xi_n)|
\le\|D\Phi(z_n)\|_{(\R\times D_{\mathrm{rad}}^{1,2}(\R^N))^*}
\le8\varepsilon_n,
\]
we have \(P(v_n)\to0\). Therefore
\[
\|\nabla v_n\|_2^2=NJ(v_n)-P(v_n)=NE+o(1),
\]
and hence \(\{v_n\}\) is bounded in \(D_{\mathrm{rad}}^{1,2}(\R^N)\).

It follows from Lemma~\ref{lem:J_MP_geometry}
and Proposition~\ref{prop:J_below_cstar} that
\(E\in(0,\mathcal E_*)\). Hence Proposition~\ref{prop:J_PS_below_cstar} yields,
after passing to a subsequence, a nontrivial nonnegative \(v\in D_{\mathrm{rad}}^{1,2}(\R^N)\) such that
\[
v_n\to v\quad\text{in }D_{\mathrm{rad}}^{1,2}(\R^N),
\quad
J'(v)=0,\quad J(v)=E.
\]
The strong maximum principle yields \(v>0\) in \(\R^N\). Therefore
\begin{equation}\label{eq:ground_state_level_upper}
\inf\left\{
J(w):\ w\in D^{1,2}(\R^N)\setminus\{0\},\ J'(w)=0
\right\}
\le E.
\end{equation}

Conversely, let \(w\in D^{1,2}(\R^N)\setminus\{0\}\) satisfy \(J'(w)=0\). As in \eqref{eq:PS_nonnegative_test}, we have \(w\ge0\). For
\(t>0\), set \(w_t(x)=w(x/t)\). The Pohozaev identity gives
\[
\int_{\R^N}K(w)\,dx=\frac{N-2}{2N}\|\nabla w\|_2^2>0.
\]
Hence
\[
J(w_t)=\frac{t^{N-2}}{2}\int_{\R^N}|\nabla w|^2\,dx
-t^N\int_{\R^N}K(w)\,dx
\to0
\quad\text{as }t\to0^+,
\]
and \(J(w_t)\to-\infty\) as \(t\to+\infty\). Choose \(T>1\) such that
\(J(w_T)<0\), and define \(\gamma(0)=0\), \(\gamma(s)=w_{sT}\) for
\(s\in(0,1]\). Since \(w_t\to0\) in \(D^{1,2}(\R^N)\) as \(t\to0^+\),
\(\gamma\in\Gamma\). Moreover, the Pohozaev identity gives \(P(w)=0\). Hence, for any \(t>0\),
\[
J(w_t)=J(w_t)-\frac{t^N}{N}P(w)
=\left(\frac{t^{N-2}}2-\frac{N-2}{2N}t^N\right)\|\nabla w\|_2^2
\le \frac1N\|\nabla w\|_2^2=J(w).
\]
 By
\eqref{eq:E_def},
\[
E\le\sup_{s\in[0,1]}J(\gamma(s))\le J(w), 
\]
which implies that
\begin{equation}\label{eq:ground_state_level_lower}
E\le
\inf\left\{
J(w):\ w\in D^{1,2}(\R^N)\setminus\{0\},\ J'(w)=0
\right\}.
\end{equation}
By \eqref{eq:ground_state_level_upper} and
\eqref{eq:ground_state_level_lower}, \eqref{eq:ground_state_level_identity}
holds.
\end{proof}

\begin{proposition}
\label{prop:gen_radial_ground_state}
Assume that \(N\ge5\) and that \textup{(G1)}--\textup{(G3)} hold. Then
\eqref{eq:1.1} admits a radially decreasing least energy positive solution
\(u\in X\).
\end{proposition}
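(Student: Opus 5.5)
The plan is to start from the radial critical point $v$ of $J$ produced by Proposition~\ref{prop:sec3_groundstate_identification}, with $J(v)=E$, and transport it back through $u=r(v)$. Since $v$ is a positive radial weak solution in $D^{1,2}_{\mathrm{rad}}(\R^N)$, the regularity lemma at the beginning of Section~\ref{sec:gen-boundedness} gives $v\in L^\infty(\R^N)\cap C^2(\R^N)$. Hence $v=v_\alpha$ with $\alpha=v(0)>0$ solves \eqref{eq:intro_radial_ode}, and Remark~\ref{rem:intro_radial_monotonicity} shows that $v$ is strictly radially decreasing. Because $r$ is strictly increasing, $u=r(v)$ is positive and radially decreasing as well.

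The next step is to show $u\in X$. By \eqref{eq:J_basic_growth} we have $0\le K(s)\le Cs^{2^*}$ for $s\ge0$, so, as in the proof of Lemma~\ref{lem:solution_sets_nonempty_decay}, \cite[Lemma~4 and Theorem~5]{MR1617704} give $v(\rho)\le C\rho^{2-N}$ and $|v'(\rho)|\le C\rho^{1-N}$ for large $\rho$. Since $N\ge5$, this yields $v\in L^2(\R^N)$, and therefore $v\in H^1(\R^N)$. Lemma~\ref{lem:prelim_r_large_bounds} $(ii)$ then gives $|u|\le|v|$ and $|\nabla u|\le|\nabla v|$. Moreover,
\[
u^2|\nabla u|^2=\frac{r(v)^2}{1+2r(v)^2}|\nabla v|^2\le\tfrac12|\nabla v|^2 .
\]
Together these give $u\in X$. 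By the dual correspondence of \cite{MR2029068,MR1933335} recalled in the introduction, $u$ is a weak solution of \eqref{eq:1.1}, and $I(u)=J(v)$; the latter follows by substituting $\nabla u=r'(v)\nabla v$ with $(r')^2(1+2r^2)=1$, and $G(r(v))=K(v)$.

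The main step is to show that $u$ has least energy among \emph{all} positive solutions of \eqref{eq:1.1} in $X$, radial or not. Let $\tilde u\in X$ be any positive solution and put $\tilde v=h(\tilde u)$. Since $h'(t)=\sqrt{1+2t^2}$, we get $|\nabla\tilde v|^2=(1+2\tilde u^2)|\nabla\tilde u|^2$, which is integrable precisely because $\tilde u\in X$. Also $|\tilde v|\le C(|\tilde u|+|\tilde u|^2)$, which lies in $L^{2^*}$ because $\tilde u^2\in D^{1,2}$. Hence $\tilde v\in D^{1,2}(\R^N)\setminus\{0\}$, and by the same correspondence $\tilde v$ is a critical point of $J$ with $J(\tilde v)=I(\tilde u)$.

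Identity \eqref{eq:ground_state_level_identity} then gives $I(\tilde u)=J(\tilde v)\ge E=J(v)=I(u)$. So $u$ is a least energy positive solution, and since it is radial, $u\in\mathcal S$.

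The step I expect to be most delicate is justifying that every weak solution $\tilde u\in X$ of \eqref{eq:1.1} corresponds to a weak solution $\tilde v\in D^{1,2}$ of \eqref{eq:intro_dual}, rather than only an $H^1$ one. I would handle this by density. For test functions of the form $\varphi=r'(\tilde v)\psi$ with $\psi\in C_c^\infty$, the two weak formulations coincide. Such $\varphi$ are admissible because $\tilde u\in X$ makes them valid test directions in the G\^ateaux sense used in the definition of weak solution. One then approximates general $\psi$, using the bound $k(s)\le Cs^{2^*-1}$ from \eqref{eq:J_basic_growth} to pass to the limit.
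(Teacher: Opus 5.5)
Your proposal follows the paper's proof. You take the positive radial critical point $v$ with $J(v)=E$ from Proposition~\ref{prop:sec3_groundstate_identification}, get monotonicity from Remark~\ref{rem:intro_radial_monotonicity} and $v\in H^1(\R^N)$ from the Flucher--M\"uller decay with $N\ge5$, set $u=r(v)$, and conclude from $I(u)=J(v)$ and the level identity \eqref{eq:ground_state_level_identity}. You also spell out $u\in X$ and the comparison $I(\tilde u)=J(h(\tilde u))\ge E$ for an arbitrary positive solution $\tilde u$, both of which the paper leaves implicit in the correspondence $u=r(v)$ asserted in the introduction.

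Your claim that $r'(\tilde v)\psi$ is automatically an admissible test direction is asserted rather than proved. Extending the weak formulation from $C_0^\infty$ to such directions is not immediate from $\tilde u\in X$ alone; it needs some local regularity of $\tilde u$ or an appeal to that correspondence. The paper, however, relies on the same correspondence without further justification, so this is not a gap relative to the paper's proof.
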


\begin{proof}
By Proposition~\ref{prop:sec3_groundstate_identification}, \eqref{eq:intro_dual}
admits a positive radial critical point \(v\in D_{\mathrm{rad}}^{1,2}(\R^N)\) with \(J(v)=E\). By
Remark~\ref{rem:intro_radial_monotonicity}, \(v\) is radially decreasing. In view of \eqref{eq:J_basic_growth}, 
\cite[Theorem~5]{MR1617704} yields $
v(\rho)\le C\rho^{2-N}$ for sufficiently large $\rho.$ Since \(N\ge5\), \(v\in H^1(\R^N)\).

Set \(u=r(v)\). Noting that
$$I(u)=J(v)= \frac12\int_{\R^N}(1+2u^2)|\nabla u|^2\,dx-
\int_{\R^N}G(u)\,dx, $$ 
and using \eqref{eq:ground_state_level_identity}, we conclude that \(u\) is a
least energy positive solution of \eqref{eq:1.1}. Since \(r\) is increasing,
\(u\) is radially
decreasing. We complete the proof.
\end{proof}

\subsection*{Acknowledgments} 
H. Liu is supported by NSFC (12571121, 12171204, 12371107). C. Wu is supported by NSFC (12071266).

\end{document}